\documentclass[12pt,reqno]{amsart}

\usepackage[hmargin=2cm,bmargin=2cm,tmargin=2.5cm]{geometry}
\usepackage{graphicx}
\usepackage{subfigure}
\usepackage{color}
\usepackage{enumerate}
\usepackage[english]{babel}
\usepackage{mathtools}
\usepackage[utf8]{inputenc}
\usepackage[T1]{fontenc}
\usepackage[b]{esvect}
\usepackage{parskip}
\usepackage{bm}
\usepackage{array}
\usepackage{arydshln}
\usepackage{multirow}
\usepackage{bigstrut}
\usepackage{bbm}
\usepackage{esdiff}
\usepackage{relsize}
\usepackage{hyperref}
\usepackage{cleveref}

\newtheorem{thm}{Theorem}[section]
\newtheorem{lem}[thm]{Lemma}

\newtheorem{cor}[thm]{Corollary}

\theoremstyle{definition}
\newtheorem{defn}{Definition}
\newtheorem{exmp}{Example}
\newtheorem{remark}[thm]{Remark}

\newcommand*{\Scale}[2][4]{\scalebox{#1}{$#2$}}%
\newcommand{\cc}[1]{\overline{#1}}

\newcommand{\cB}{\mathcal{B}}

\newcommand{\cH}{\mathcal{H}}

\newcommand{\cK}{\mathcal{K}}

\newcommand{\cO}{\mathcal{O}}

\newcommand{\cR}{\mathcal{R}}

\newcommand{\cU}{\mathcal{U}}

\newcommand{\bC}{\mathbb{C}}

\newcommand{\bI}{\mathbb{I}}

\newcommand{\bR}{\mathbb{R}}

\def\up#1{^{(#1)}}

\DeclareMathOperator{\Dom}{Dom}
\DeclareMathOperator{\Hom}{Hom}

\newcommand{\vt}{{\mathfrak{v}}}
\newcommand{\Dmax}{{\widetilde{\mathfrak{H}}(\Gamma)}}
\newcommand{\dmax}{{\widetilde{\mathfrak{h}}(\Gamma)}}
\newcommand{\Ltwo}{{L^2(\Gamma)}}
\newcommand{\Prho}{\mathbf{P}_{\!\rho}}

\title[Elastic beam frames: variational and differential
formulation]{Three dimensional elastic beam frames: \\ rigid joint
  conditions in variational and differential formulation}

\author[G.~Berkolaiko]{Gregory Berkolaiko}
\address{Department of Mathematics, Texas A\&M University, College Station, TX 77843-3368, USA}

\author[M.~Ettehad]{Mahmood Ettehad}
\address{Institute for Mathematics and its Applications (IMA), University of Minnesota, Minneapolis, MN 55455, USA}

\begin{document}

\begin{abstract}
  We consider three-dimensional elastic frames constructed out of
  Euler--Bernoulli beams and describe a simple process of generating
  joint conditions out of the geometric description of the frame. The
  corresponding differential operator is shown to be self-adjoint. In
  the special case of planar frames, the operator decomposes into a
  direct sum of two operators, one coupling out-of-plane displacement
  to angular (torsional) displacement and the other coupling in-plane
  displacement with axial displacement (compression).

  Detailed analysis of two examples is presented. We actively exploit
  the symmetry present in the examples and decompose the operator by
  restricting it onto reducing subspaces corresponding to irreducible
  representations of the symmetry group. These ``quotient'' operators
  are shown to capture particular oscillation modes of the frame. 
\end{abstract}

\maketitle

\section{Introduction}

Mathematical modeling of vibration of structures made of joined
together beams is a topic of natural interest for engineering research
and, more recently, for mathematicians working on differential
equations defined on metric graphs. Each beam is described by an
Euler--Bernoulli energy functional\footnote{Or, allowing for more
  degrees of freedom, by a Timoshenko functional.  We will restrict
  our attention to Euler--Bernoulli theory here.} which involves four
degrees of freedom for every infinitesimal element along the beam:
lateral displacement (2 degrees of freedom), axial displacement, and angular displacement, see Figure \ref{fig:dofs} for schematic
representation of these degrees of freedom.  At a joint, these four
functions, supported on the beams involved, must be related via
matching conditions that take into account the physics of the joint.
The question of describing correct matching conditions is of central
importance.  So far, most of the engineering literature has been
dealing with planar structures (see
\cite{MM05,Mei_jva12,LR13,GLL17,TSOGM20} and references therein) and a
recent breakthrough into 3 dimensions has been restricted to
rectangular structures \cite{Mei19}. In mathematical literature, in
addition to the planarity assumption, a further restriction has been
imposed by considering out-of-plane displacements alone (see
\cite{DN00,BL04,MR08,KiiKurUsm_pla15,GM20}).  As we will see in this
paper (and as observed in the engineering literature from the start),
this is a significant departure from the mechanical model, since
out-of-plane displacements are coupled to angular displacement in all
non-trivial structures.

\begin{figure}
  \centering
  \includegraphics[width=0.925\textwidth]{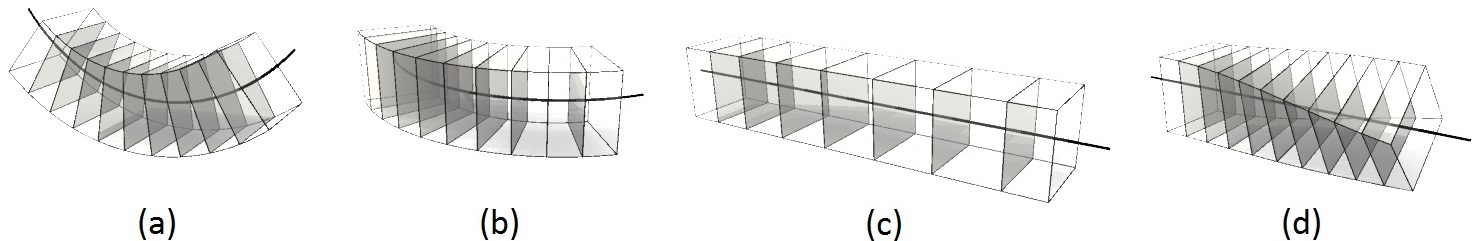}
  \caption{Degrees of freedom associated to Euler-Bernoulli beam,
    (a,b) lateral displacements, (c) axial displacement, and (d) angular displacement.}
  \label{fig:dofs}
\end{figure}

In this paper we focus mostly on studying the rigid joint, a joint
where relative angles of participating beams remain constant
throughout the motion. We put forward a simple, general and compact description of the matching conditions at the rigid joint (and some of its variants), both for the Lagrangian and Hamiltonian formulations. Mathematicaly speaking, we describe the closed symmetric sesquilinear
form associated with the model as well as the corresponding
self-adjoint operator. The results are formulated for a general
3-dimensional structure; in the planar case the Hamiltonian decomposes
into a direct sum involving two pairs of coupled degrees of freedom:
out-of-plane displacement and angular displacement on one side,
in-plane and axial displacements on the other.

To illustrate our results we present two examples, one planar structure and one 3-dimensional structure (``antenna tower''). In the latter example, a significant role is played by the symmetry.  We decompose the Hamiltonian operator by restricting it onto reducing subspaces corresponding to irreducible representations of the symmetry group. Numerical simulations of the entire structure are used to illustrate the corresponding decomposition of the vibrational spectrum of the tower.

While maintaining mathematical rigor, we made every effort to
highlight the practical aspects of every theorem. Some
technicalities are deferred to the Appendix.  When presenting results
of numerical simulations, we compared the results of direct finite
element computations (see, for instance, \cite{AB17} for analysis of
the method on network structures carrying a scalar second order
equation) with the results of the method based on characteristic (or
secular) equation \cite{WilWit_ijms70,WitWil_qjmam71,Bel_laa85} (also
known as ``dynamic stiffness matrix method'').  In
Section~\ref{sec:outlook} we present a partial list of directions for
further mathematical investigations together with additional
references.

\section{Elasticity of beams and beam frames}

Throughout the manuscript, $\{\vec E_1, \vec E_2, \vec E_3\}$ will
denote the fixed orthonormal basis which spans the physical three
dimensional Euclidean space in which the beam frame is
embedded.

\subsection{Deformation of an Euler--Bernoulli beam and rigid joint
  conditions}

According to the Euler--Bernoulli hypothesis, which states that
``plane sections remain plane'', the geometry of the spatial beam is
described by the centroid line\footnote{The term ``centroid''
  indicates that this line is the locus of the centers of mass of the
  cross-sections.} and a family of the cross-sections orthogonal to
it.  We assume the cross-section of the beam is arbitrary but unchanged
along the axis and that the beam is straight in its undeformed
configuration.  In this case, each undeformed beam is described by an
orthonormal basis $\{\vec i,\vec j, \vec k\}$, called the
cross-section basis or the material basis. The base vector $\vec i$ is
directed along the centroid line and the vectors $\vec j$ and $\vec k$
are directed along the principal axes of inertia of the
cross-section. The basis is assumed to be right-handed, i.e.
$\vec i = \vec j \times \vec k$.  The choice of orientation, implicit
in the choice of the overall sign, is arbitrary but fixed.
\begin{figure}
  \centering
  \includegraphics[width=0.6\textwidth]{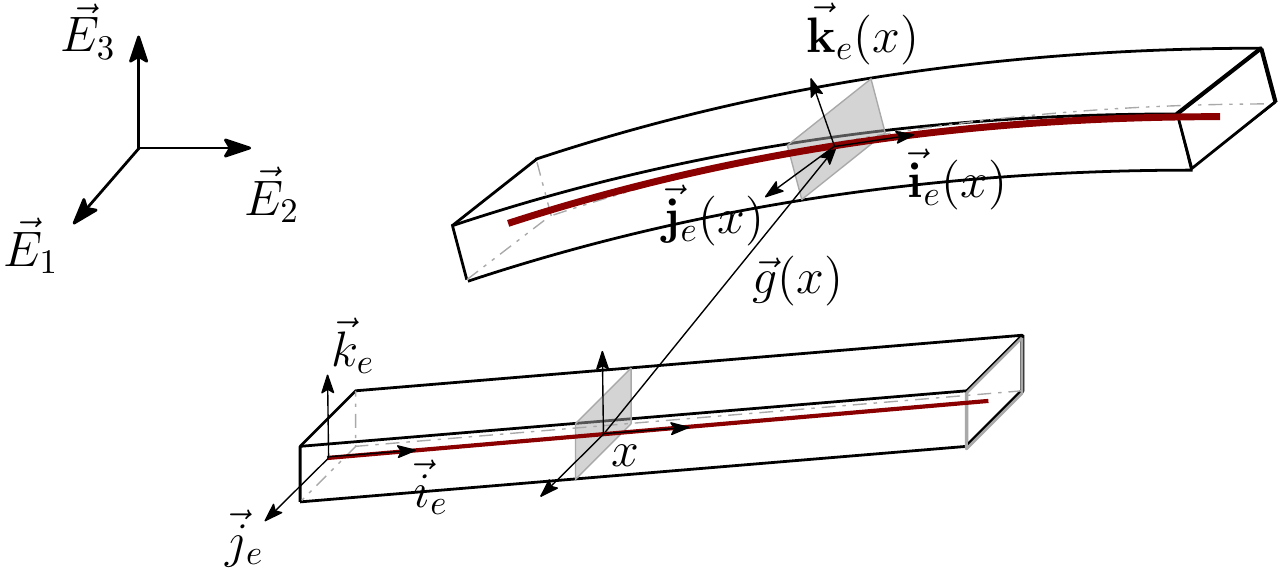}
  \caption{Local coordinates of a beam in its initial and deformed
    configurations.}
  \label{fig:positionRotVects}
\end{figure}
The deformed configuration of the beam can be fully described by the
displacement vector $\vec g(x)$ of the centroid line along with the
family of orthonormal bases
$\{\bm{\vec i}(x),\bm{\vec j}(x),\bm{\vec k}(x)\}$ which describe the
orientation of the cross sections.  Here $x$ represents the arc-length
coordinate along the centroid of the reference (undeformed)
configuration, see Figure~\ref{fig:positionRotVects}. 

Since the Euler--Bernoulli cross sections remain orthogonal to the
centroid line, the above parameterization has redundancy (assuming the
centroid remains smooth).  In what follows we will describe
alternative representations of the deformed configuration, finally
removing all redundancy in Theorem~\ref{primaryCond}.

The relationship between the cross-section basis in the reference and
the deformed configurations can be expressed as
\begin{equation}
  \label{eq:deformedRot}
  \bm{\vec i}(x) = \cR(x) \vec i,
  \quad \quad
  \bm{\vec j}(x) = \cR(x) \vec j,
  \quad \quad
  \bm{\vec k}(x) = \cR(x) \vec k
\end{equation}
where the \textbf{rotation transformation} $\cR(x)$ is an element of
$\text{SO}(3)$, the Lie group of proper orthogonal linear
transformations.

Consider now several beams, labeled $e_1, \ldots, e_n$ meeting at a
joint.  The joint is rigid if it
moves as a whole, preserving both the connection and the relative
angles of all incident beams\footnote{In engineering, this is called ``fully rigid joint'' to distinguish from other situations when beams may experience limited bending at the attachment points}.
Figure~\ref{fig:jointTypes3} shows an example of a rigid joint.
Mathematically, the deformed configuration of a joint is described by
its displacement and rotation which are obtained as the (coinciding!)
limits of the corresponding vectors along any beam attached to the
joint \cite{LLS92,Lagnese_multilink}.

\begin{figure}[h]
  \centering \includegraphics[width=0.575\textwidth]{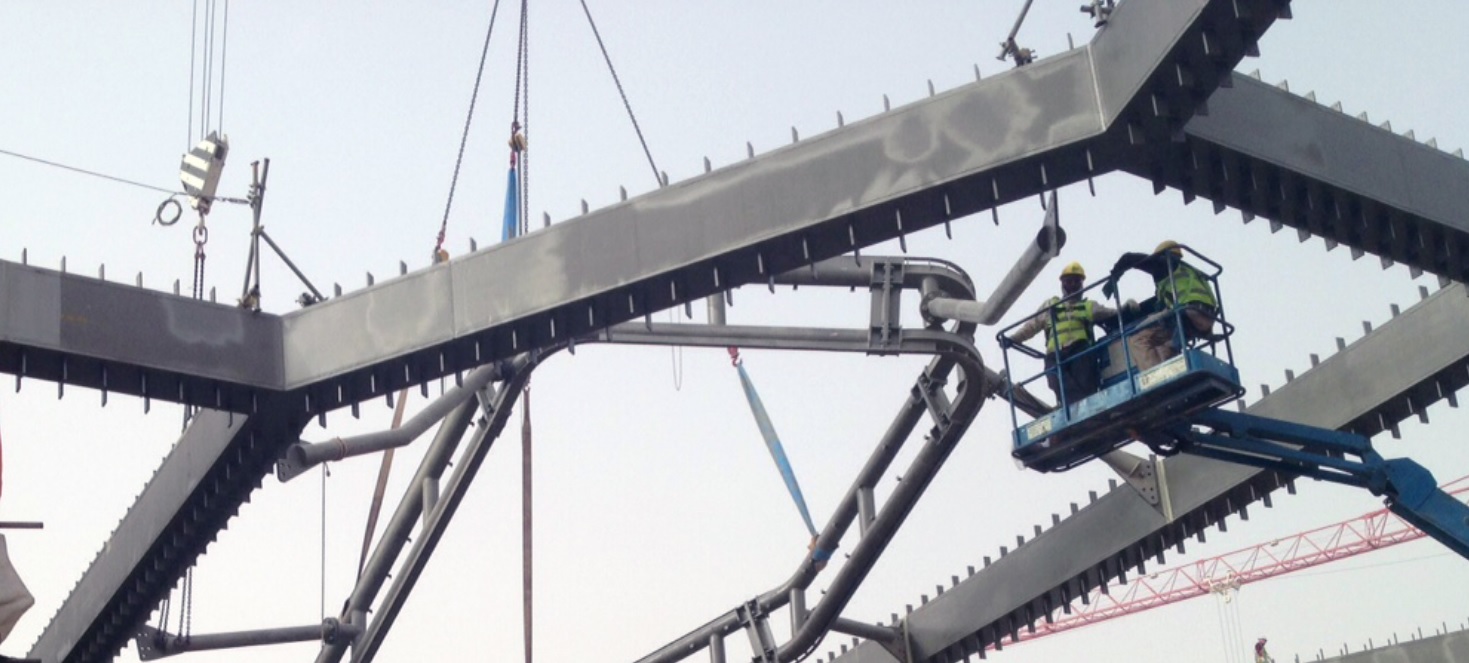}
  \caption{Connection of beams by rigid joints \href{https://www.studioavc.com/portfolio-items/king-abdullah-petroleum-studies-and-research-centre-2/?portfolioID=22}{(source)}.}
  \label{fig:jointTypes3}
\end{figure}

\begin{defn}
  \label{def:rigid}
  A joint $\vt$ with $n$ incident beams $\{e_j\}_{j=1}^n$ is called
  \textbf{rigid}, if the displacement and rotation on beams
  $e_j$ satisfy
  \begin{subequations}
    \label{rigidVertexDefn}
    \begin{equation}
      \label{eq:rigid_displacement}
      \vec g_{e_1}(\vt) = \cdots = \vec g_{e_n}(\vt),
    \end{equation}
    \begin{equation}
      \label{eq:rigid_rotation}
      \cR_{e_1}(\vt) = \cdots = \cR_{e_n}(\vt),
    \end{equation}
  \end{subequations}
  where
  \begin{equation}
    \label{eq:limits}
    \vec g_{e_j}(\vt) = \lim_{x\to\vt} \vec g_{e_j}(x),
    \qquad
    \cR_{e_j}(\vt) = \lim_{x\to\vt} \cR_{e_j}(x).
  \end{equation}
  We call a rigid joint \textbf{free} if it has no further
  restrictions imposed on it.
\end{defn}

Note that the definition assumes that the limits in \eqref{eq:limits}
exist; this will require a certain degree of smoothness of the
functions defined on the beams, see the discussion
preceding~\eqref{eq:vertex_limits} below.

\subsection{Euler--Bernoulli energy functional}

We now represent the centroid displacement vector $\vec{g}(x)$ in the reference basis of the undeformed beam,
\begin{equation}
  \label{dispComp}
  \vec{g}(x) =: u(x)\vec{i} + w(x)\vec{j} + v(x)\vec{k}.
\end{equation}
The component $u(x)$ is called the \textbf{axial displacement} and
$w(x)$ and $v(x)$ are \textbf{lateral displacements}.  We also
introduce the \textbf{in-axis angular
  displacement}\footnote{Alternative terminology includes names such
  as ``longitudinal deflection'' for $u(x)$, ``transverse deflection''
  for $(w(x), v(x))$ and ``torsional deflection'' for $\eta(x)$.}
\begin{equation}
  \label{eq:torsion_def}
  \eta(x) := \bm{\vec{j}}(x) \cdot \vec{k}
  = \cR(x) \vec{j} \cdot \vec{k}.
\end{equation}

In the context of the kinematic Bernoulli assumptions for the beam, no pre-stress or external force, the total strain energy of small
deformations of the beam can be expressed as (see, e.g. \cite[section
5.3.4]{GR15} or \cite{KarnovskiiLebed_FreeVib})
\begin{equation}
  \label{eq:EB_energy}
  \cU \up{e} := \int_e
  \Big(a(x) \big|v''(x)\big|^2 +
  b(x) \big|w''(x)\big|^2 + c(x)\big|u'(x)\big|^2 +
  d(x)\big|\eta'(x)\big|^2\Big) dx.
\end{equation}
The integration here is over the beam $e$, parameterized by the
arc-length $x\in[0,\ell]$. Interpretation of the parameters in the
energy functional are as follows: the parameter $a(x)$ represents the
bending stiffness about the local axis $\vec j$ at point $x$ along the
beam, $b(x)$ represents the bending stiffness about the local axis
$\vec k$, $c(x)$ represents the axial stiffness in the direction of
$\vec i$, and, finally, $d(x)$ represents the angular displacement
stiffness in the direction of $\vec i$. Throughout the rest of the
manuscript we assume each beam in our frame is homogeneous in the
axial direction and hence
\begin{equation}
  \label{eq:material_params}
  a(x) \equiv a_e,
  \qquad
  b(x) \equiv b_e,
  \qquad
  c(x) \equiv c_e,
  \qquad\text{and}\qquad
  d(x) \equiv d_e
\end{equation}
are real, positive constants. Extension of all results to variable
stiffness is straightforward.

Some remarks are in order.  First, the separation of lateral
displacements $v(x)$ and $w(x)$ in the energy functional is a
consequence of choosing the axes $\vec j$ and $\vec k$ to coincide
with the principal axes of inertia of the cross-section.  Second, as
a consequence of Euler--Bernoulli hypothesis, only one component of
$\cR(x)$ participates in the strain energy.  The other components are
derived quantities which will be expressed in terms of the centroid
displacement $\vec{g}(x)$ when we describe $\cR(x)$ via linearized
rotation vector.

\subsection{Beam frames with rigid joints}
\label{sec:frames_description}

A \textbf{beam frame} is a collection of beams connected at joints.
We will describe a beam frame as a geometric graph $\Gamma = (V,E)$,
where $V$ denotes the set of vertices and $E$ the set of edges.  The
vertices $\vt \in V$ correspond to joints and edges $e \in E$ are the
beams.  Each edge $e$ is a collection of the following information:
origin and terminus vertices $\vt_{e}^{o},\vt_{e}^{t}\in V$, length
$\ell_e$ and the local basis
$\{\vec{i}_e, \vec{j}_e, \vec{k}_e\}$.  Describing the
vertices $V$ as points in $\bR^3$ also fixes the length $\ell_e$ and
the axial direction $\vec{i}_e$ (from origin to terminus); the choice
of $\vec{j}_e$ in the plane orthogonal to $\vec{i}_e$ still needs to
be specified externally.

The distinction between origin and terminus, and thus the direction of
$\vec i_e$ is unimportant in analysis but should be fixed for
consistency.  It is important to use the same beam basis when writing
out joint conditions at both ends of the beam.  We will use the
\textbf{signed incidence indicator} $s_{\vt}^e$ which is defined to be
$1$ when $\vt$ is the origin of $e$, $-1$ if it is the terminus of $e$
and $0$ otherwise.  We will also write $e\sim \vt$ when $\vt$ is
either the origin or the terminus of $e$.

The precise meaning of the terms like $\vec{g}_{e}(\vt)$ and
$\cR_{e_1}(\vt)$ in equations~\eqref{rigidVertexDefn} depends on
whether $\vt$ is the origin or the terminus of $e$.  In the sequel,
$\vec{g}_e$ will be sufficiently smooth (at least Sobolev $H^1$)
vector-function defined on the open interval $(0,\ell_e)$ and
\begin{equation}
  \label{eq:vertex_limits}
  \vec{g}_e(\vt_e^o) := \lim_{x\to0} \vec{g}_{e}(x),
  \qquad
  \vec{g}_e(\vt_e^t) := \lim_{x\to\ell_e} \vec{g}_{e}(x),
\end{equation}
and similarly for $\eta_e$.

Finally, the strain energy of the beam frame (assuming no vertex
energy) is a sum of energies of the individual beams,
\begin{equation}
  \label{eq:frame_energy}
  \cU \up{\Gamma} := \sum_{e\in E} \cU \up{e}
  = \sum_{e\in E} \int_e
  \Big(a_e \big|v''_e(x)\big|^2 +
  b_e \big|w''_e(x)\big|^2 + c_e\big|u'_e(x)\big|^2 +
  d_e\big|\eta'_e(x)\big|^2\Big) dx.
\end{equation}
The interaction between different beams affects the energy via
constraints \eqref{rigidVertexDefn} which are imposed on the domain of
$\cU \up{\Gamma}$.  We make this mathematically precise in the next
section.

We stress that in the present paper we assume that joints are rigid
and we focus primarily on the free rigid joints.  Other common cases
include pinned (displacement is zero) and clamped (displacement and
rotation are zero) rigid joints.  These extensions are briefly
discussed in section~\ref{sec:VertexEnergy}.  Other flavor of joint
conditions can happen in real world application and mathematical
modeling, see, for example, \cite{RBR13,MM05}. Pathways for extending
the current framework to more general joint conditions are discussed
in Remark~\ref{rem:all_conditions}.

\section{Energy form and its corresponding self-adjoint
  differential operator}
\label{sec:main_results}

\subsection{Primary matching conditions: the form domain}

We now give a formal mathematical description of the Euler--Bernoulli
strain energy form. 

\begin{thm}
  \label{primaryCond}
  The energy functional~\eqref{eq:frame_energy} of the beam frame with free rigid joints is the quadratic form corresponding to the
  positive closed sesquilinear form
  \begin{align}
    \label{varEnergyForm}
    h\left[\widetilde\Psi, \Psi\right] :=
    \sum_{e \in E} \int_e \big( a_e \cc{\widetilde v_e''}v_e''
    + b_e \cc{\widetilde w_e''} w_e''
    + c_e \cc{\widetilde u_e'} u_e'
    + d_e \cc{\widetilde \eta_e'} \eta_e'  \big) dx,
  \end{align}
  densely defined on the Hilbert space
  \begin{equation}
  \label{eq:Hilbert_space}
  \Ltwo := \mathop{\mathsmaller\bigoplus}_{e \in E} L^2(e)
  \oplus \mathop{\mathsmaller\bigoplus}_{e \in E} L^2(e)
  \oplus \mathop{\mathsmaller\bigoplus}_{e \in E} L^2(e)
  \oplus \mathop{\mathsmaller\bigoplus}_{e \in E} L^2(e),
  \end{equation}
  with the domain of $h$ consisting of the vectors
  \begin{equation}
  \label{eq:Sobolev_form}
  \Psi :=
  \begin{pmatrix}
  v, & w, & u, & \eta
  \end{pmatrix}^T
  \in \mathop{\mathsmaller\bigoplus}_{e \in E} H^2(e)
  \oplus \mathop{\mathsmaller\bigoplus}_{e \in E} H^2(e)
  \oplus \mathop{\mathsmaller\bigoplus}_{e \in E} H^1(e)
  \oplus \mathop{\mathsmaller\bigoplus}_{e \in E} H^1(e)
  =: \dmax
  \end{equation}
  that satisfy, at every vertex $\vt \in V$, the \textbf{free rigid joint}
  conditions
  \begin{enumerate}
  \item [(i)] continuity of displacement: for all $e, e' \sim \vt$,
    \begin{align}
      \label{dispRigid}
      u_e \vec i_e + w_e \vec j_e + v_e \vec k_e =
      u_{e'} \vec i_{e'} + w_{e'} \vec j_{e'} + v_{e'} \vec k_{e'},
    \end{align}
  \item [(ii)] continuity of rotation: for all $e, e' \sim \vt$, 
    \begin{align}
      \label{rotationRigid}
      \eta_e \vec i_e - v_e' \vec j_e + w_e' \vec k_e =
      \eta_{e'} \vec i_{e'} - v_{e'}' \vec j_{e'} + w_{e'}' \vec k_{e'},
    \end{align} 
  \end{enumerate}
  where all functions are evaluated at the vertex $\vt$ and all
  derivatives are taken in direction $\vec i_e$ or $\vec i_{e'}$
  correspondingly.
\end{thm}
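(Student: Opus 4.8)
The assertion combines a modeling statement with a functional-analytic one, and I would separate the two. The quadratic form attached to $h$ is obtained by setting $\widetilde\Psi=\Psi$ in \eqref{varEnergyForm}, and its integrand is then literally $a_e|v_e''|^2+b_e|w_e''|^2+c_e|u_e'|^2+d_e|\eta_e'|^2$, so the identification with the beam-frame energy \eqref{eq:frame_energy} is immediate. What must be proved is therefore (a) that the geometric rigid-joint conditions of Definition~\ref{def:rigid} are, in the linearized Euler--Bernoulli regime, equivalent to the explicit component conditions (i)--(ii); and (b) that $h$, restricted to the vectors of $\dmax$ obeying (i)--(ii), is densely defined, symmetric, positive and closed.

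For part (a) I would first dispose of (i): condition \eqref{eq:rigid_displacement} asserts equality of the single vector $\vec g(\vt)$, and writing this vector in each incident beam's material basis via \eqref{dispComp} gives \eqref{dispRigid} verbatim. The substance is condition (ii). Here I would linearize the rotation, writing $\cR(x)=\mathrm{Id}+\Theta(x)+O(\|\Theta\|^2)$ with $\Theta$ skew-symmetric, and encode $\Theta$ by its axial (rotation) vector $\vec\theta$ via $\Theta\vec y=\vec\theta\times\vec y$. Continuity of rotation \eqref{eq:rigid_rotation} then linearizes to continuity of $\vec\theta$. The Euler--Bernoulli hypothesis — cross-sections stay orthogonal to the deformed centroid — forces the deformed axial vector $\bm{\vec i}=\cR\vec i$ to align, to first order, with the deformed tangent $\vec i+\vec g'$; projecting $\vec\theta\times\vec i$ onto $\vec j$ and $\vec k$ and matching with the transverse part $w'\vec j+v'\vec k$ of $\vec g'$ fixes two components of $\vec\theta$, while the definition \eqref{eq:torsion_def} of $\eta$ supplies the third. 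The outcome is $\vec\theta=\eta\,\vec i-v'\,\vec j+w'\,\vec k$, which is precisely the vector appearing on each side of \eqref{rotationRigid}; this is the ``removal of redundancy'' promised before the theorem, reducing the three rotational degrees of freedom of $\cR$ to the single independent twist $\eta$. I would also record that since $\vec i_e$ is fixed along $e$, the derivatives are unambiguously those in the direction $\vec i_e$, matching the stated convention at either endpoint.

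For part (b), symmetry and positivity are read off directly: the integrand of \eqref{varEnergyForm} is conjugate-linear in $\widetilde\Psi$ and linear in $\Psi$, the reality of $a_e,\dots,d_e$ yields the Hermitian symmetry $h[\widetilde\Psi,\Psi]=\cc{h[\Psi,\widetilde\Psi]}$, and with $\widetilde\Psi=\Psi$ the integrand is a sum of non-negative terms because $a_e,b_e,c_e,d_e>0$. Density holds because functions supported away from all vertices satisfy (i)--(ii) trivially and are already dense in $\Ltwo$. For closedness I would equip $\dmax$ with the form norm $\big(h[\Psi,\Psi]+\|\Psi\|_{\Ltwo}^2\big)^{1/2}$ and show it is equivalent to the ambient $H^2\oplus H^2\oplus H^1\oplus H^1$ norm; the only missing ingredient is control of $\|v_e'\|$ and $\|w_e'\|$, which on each finite interval follows from the interpolation inequality $\|f'\|_{L^2}^2\le\varepsilon\|f''\|_{L^2}^2+C_\varepsilon\|f\|_{L^2}^2$. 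Thus $\dmax$ is complete in the form norm. Finally the constraints (i)--(ii) cut out a closed subspace: by the Sobolev embeddings $H^2(e)\embeds C^1(\bar e)$ and $H^1(e)\embeds C(\bar e)$, the vertex traces of $u_e,w_e,v_e,\eta_e$ and of $v_e',w_e'$ are bounded linear functionals on $\dmax$, so (i)--(ii) are continuous linear conditions whose common kernel is closed. The restriction of $h$ to this closed subspace is then a closed form.

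The routine parts are the symmetry, positivity and the interpolation estimate. The genuine obstacle is the modeling step in part (a): correctly linearizing $\cR$, identifying which component of the infinitesimal rotation survives under the Euler--Bernoulli orthogonality constraint, and tracking the signs in the cross products so that the result matches \eqref{rotationRigid} exactly. A secondary technical point worth care is that the regularity split in \eqref{eq:Sobolev_form} ($H^2$ for $v,w$ and $H^1$ for $u,\eta$) is exactly what makes both the energy finite and the traces in (i)--(ii) — in particular the first-derivative traces $v_e',w_e'$ — well defined, so one should confirm that these two requirements are consistent rather than merely assume it.
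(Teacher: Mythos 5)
Your proposal is correct and follows essentially the same route as the paper: your linearization $\cR=\mathrm{Id}+\Theta+O(\|\Theta\|^2)$ with axial vector $\vec\theta=\eta\,\vec i-v'\,\vec j+w'\,\vec k$ is precisely the content of Lemma~\ref{lem:RotationVector} (which uses the exponential chart $\cR=\exp(\Omega)$ and the Euler--Bernoulli tangency condition in the same way), and your closedness argument --- equivalence of the form norm with the $\dmax$-norm via the interpolation inequality, plus continuity of the vertex traces so that conditions (i)--(ii) cut out a closed subspace --- reproduces the paper's proof step for step, with Sobolev embeddings standing in for the explicit trace inequalities \eqref{eq:Sobolev1_trace}--\eqref{eq:Sobolev2_trace}. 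Your added remark on density (compactly supported functions on open edges satisfy the vertex conditions vacuously) fills in a point the paper asserts without comment.
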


\begin{remark}
  Let us comment on the information contained in the theorem.
  Equation~\eqref{eq:Hilbert_space} specifies the underlying inner
  product (in the terminology of Gelfand triples, it is the ``pivot
  space''), while equation~\eqref{eq:Sobolev_form} prescribes the
  correct smoothness requirements on the individual fields (twice
  differentiable $v$, $w$ and once differentiable $u$ and $\eta$).
  The form being semibounded\footnote{In our case, it is semibounded
    below by 0, i.e.\ positive.  Also note that semibounded implies
    (by definition) symmetric.} and closed tells us that it
  corresponds to a self-adjoint differential operator
  (``Hamiltonian'') which we will describe in the next theorem.

  Perhaps the most practical consequence of this theorem is the set of
  matching conditions that are expressing the definition of joint
  rigidity in terms of the degrees of freedom of the beam --- the
  displacement and angular displacement fields --- written in the
  coordinate representation in which they enter the energy ($v$, $w$,
  $u$ and $\eta$).  Given that $\eta$ is the only coordinate of
  angular nature, it may mistakenly appear that continuity of rotation
  ought to be expressed as $\eta_e = \eta_{e'}$;
  equation~\eqref{rotationRigid} is a reminder that $\eta_e$ is just
  one component of rotation --- the one around the axis of the beam
  --- while the continuity must apply to the entire element of SO(3).
\end{remark}

To get to condition~\eqref{rotationRigid} we will parameterize small
rotations in terms of the \textbf{rotation vector} we denote $\omega$.

\begin{lem}
  \label{lem:RotationVector}
  The mapping $\vec\omega \in \bR^3 \mapsto \cR \in \mathrm{SO}(3)$
  given by
  \begin{equation}
    \label{expRota}
    \cR = \exp(\Omega),
  \end{equation}
  where $\Omega = \Omega(\vec \omega)$ is a $3\times3$ skew symmetric
  matrix acting as $\vec{a} \mapsto \vec\omega \times \vec{a}$, is a
  local $C^\infty$-diffeomorphism between a neighborhood of $\vec{0}$
  in $\bR^3$ and a neighborhood of $\bI$ in $\mathrm{SO}(3)$.  For
  small deformations of the beam characterized by
  $\Psi = (v,\ w,\ u,\ \eta)$, the
  corresponding $\vec \omega(x)$ is
  \begin{equation}
    \label{eq:omega_lin}
    \vec \omega(x) = \eta(x) \vec i - v'(x) \vec j + w'(x) \vec k
    + \cO(\|\Psi\|^2_\dmax),
  \end{equation}
  where $\{\vec i, \vec j, \vec k\}$ is the local basis of the beam
  and the norm is the Sobolev norm as in \eqref{eq:Sobolev_form}.
\end{lem}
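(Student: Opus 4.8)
The plan is to handle the two assertions in turn: first the abstract claim that $\exp$ is a local $C^\infty$-diffeomorphism, then the concrete first-order identification of $\vec\omega$ in terms of $(v,w,u,\eta)$.

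For the diffeomorphism claim I would invoke the inverse function theorem. The assignment $\vec\omega \mapsto \Omega(\vec\omega)$ is a linear isomorphism of $\bR^3$ onto the Lie algebra $\mathfrak{so}(3)$ of skew-symmetric matrices, so it suffices to treat the matrix exponential $\exp\colon\mathfrak{so}(3)\to\mathrm{SO}(3)$ near $0$. Since $\exp(\Omega)=\bI+\Omega+\tfrac12\Omega^2+\cdots$ is real-analytic with differential equal to the identity at $\Omega=0$, the inverse function theorem yields a $C^\infty$ local inverse; the image lies in $\mathrm{SO}(3)$ because the exponential of a skew matrix is orthogonal with determinant $+1$. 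Alternatively one may simply quote the Rodrigues formula $\exp(\Omega)=\bI+\tfrac{\sin\theta}{\theta}\Omega+\tfrac{1-\cos\theta}{\theta^2}\Omega^2$ with $\theta=\|\vec\omega\|$, whose smooth dependence on $\vec\omega$ makes the local diffeomorphism property transparent.

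For the linearization I would expand $\cR=\exp(\Omega(\vec\omega))=\bI+\Omega(\vec\omega)+\cO(\|\vec\omega\|^2)$, so that $\cR\vec a=\vec a+\vec\omega\times\vec a+\cO(\|\vec\omega\|^2)$ for any fixed $\vec a$, and then pin down the three components of $\vec\omega=\alpha\vec i+\beta\vec j+\gamma\vec k$ using two geometric inputs. The Euler--Bernoulli constraint forces the deformed axial vector $\bm{\vec i}(x)=\cR(x)\vec i$ to be the unit tangent to the deformed centroid $\vec r(x)=x\vec i+\vec g(x)$; differentiating and normalizing gives, to first order, $\bm{\vec i}=\vec i+w'\vec j+v'\vec k+\cO(\|\Psi\|^2)$, the $u'$ term entering only through the normalization and hence being quadratic in the transverse components. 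Comparing with $\cR\vec i=\vec i+\gamma\vec j-\beta\vec k+\cO(\|\vec\omega\|^2)$ forces $\gamma=w'$ and $\beta=-v'$. The last component comes from the definition $\eta=\cR\vec j\cdot\vec k$: since $\vec\omega\times\vec j=\alpha\vec k-\gamma\vec i$, one reads off $\eta=\alpha+\cO(\|\vec\omega\|^2)$, whence $\alpha=\eta$. Assembling these yields $\vec\omega=\eta\vec i-v'\vec j+w'\vec k+\cO(\|\Psi\|^2)$, which is exactly \eqref{eq:omega_lin}.

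The step demanding the most care is the error bookkeeping, namely upgrading the pointwise quadratic remainders to the uniform bound $\cO(\|\Psi\|^2_\dmax)$ in the Sobolev norm. The remainder at a point $x$ is governed by quadratic expressions in $v'(x),w'(x),\eta(x)$ together with the quadratic corrections coming from inverting $\exp$, so I would apply the one-dimensional Sobolev embeddings $H^2(e)\embeds C^1(\bar e)$ and $H^1(e)\embeds C^0(\bar e)$ to bound $\|v'\|_\infty,\|w'\|_\infty,\|\eta\|_\infty$ by $\|\Psi\|_\dmax$, thereby making the pointwise remainder uniformly $\cO(\|\Psi\|^2_\dmax)$. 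I would also verify that the reconstruction of $\cR$ from the independent fields $(v,w,u,\eta)$ --- axial direction fixed by the centroid tangent, twist fixed by $\eta$ --- is well defined and smooth once $\|\Psi\|_\dmax$ is small, which is precisely where the local invertibility established in the first part is used.
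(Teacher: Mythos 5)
Your proposal is correct and follows essentially the same route as the paper's proof: linearize $\cR=\bI+\Omega(\vec\omega)+\cO(|\vec\omega|^2)$, identify the deformed axial vector $\bm{\vec i}$ with the normalized tangent $(\vec i+\vec g')/|\vec i+\vec g'|$ (which kills the $u'$ contribution at first order), and read off the in-axis component from the definition $\eta=\cR\vec j\cdot\vec k$. The only differences are matters of explicitness --- you spell out the inverse function theorem where the paper calls the exponential chart ``standard,'' and you justify the passage from pointwise $\cO(|\vec\omega|^2)$ remainders to the Sobolev bound $\cO(\|\Psi\|^2_{\dmax})$ via the embeddings $H^2(e)\embeds C^1(\bar e)$, $H^1(e)\embeds C^0(\bar e)$, a step the paper leaves implicit.
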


\begin{proof}
  Exponential parameterization of $\mathrm{SO}(3)$ is standard; it
  is a local chart around $\omega = \vec{0}$ which can be extended to
  cover all of $\mathrm{SO}(3)$ except rotations by $\pi$ (around some
  axis). Expanding the exponential we get
  \begin{equation}
    \label{eq:linearVecRota}
    \cR \vec a = 
    \vec a + \vec \omega \times \vec a + \cO(|\vec \omega|^2),
  \end{equation}
  which are the leading terms in the so-called Euler--Rodrigues
  formula, more common in the applied literature (a discussion of this
  formula and its connection to exponential form (\ref{expRota}) can
  be found in the Appendix).  In particular, applying
  \eqref{eq:linearVecRota} to \eqref{eq:deformedRot}, we get
  \begin{align}
    \label{eq:iRot}
    \bm{\vec i}(x) &= \vec i + \vec \omega(x) \times \vec i
                     + \cO(|\vec \omega|^2),\\
    \label{eq:jRot}
    \bm{\vec j}(x) &= \vec j + \vec \omega(x) \times \vec j
                     + \cO(|\vec \omega|^2),\\
    \label{eq:kRot}
    \bm{\vec k}(x) &= \vec k + \vec \omega(x) \times \vec k
    + \cO(|\vec \omega|^2).
  \end{align}
  Substituting \eqref{eq:jRot} into \eqref{eq:torsion_def} we get
  \begin{equation}
    \label{eq:omega1}
    \eta(x) = \vec \omega(x) \cdot \vec{i} + \cO(|\vec \omega|^2).
  \end{equation}
  On the other hand, we can calculate $\bm{\vec{i}}(x)$ on an edge $e$
  from the corresponding $\vec{g}(x)$.
  The vector
  \begin{equation}
    \label{eq:tangent_to_beam}
    \vec \xi(x)
    := \lim_{h \rightarrow 0}
    \frac{h\vec{i} + \vec g(x + h) - \vec{g}(x)}{h}
    = \vec{i} + \vec{g}'(x)
  \end{equation}
  is a tangent vector to the beam $e$ at point $x$.  The extra term
  $h\vec{i}$ appears because $\vec{g}(x)$ is the displacement vector of
  the material point $\vt_{e}^{o} + x\vec{i}$, not a position vector,
  see figure \eqref{fig:defArrows}.
  \begin{figure}
  	\centering
  	\includegraphics[width=0.4\textwidth]{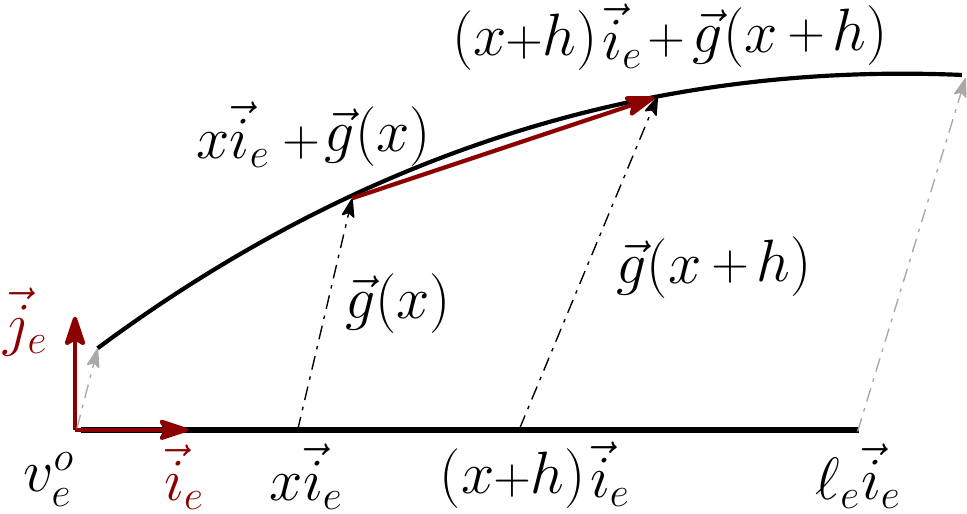}
  	\caption{Displacement of points on beam $e$}. 
  	\label{fig:defArrows}
  \end{figure}
  To normalize $\vec{\xi}(x)$, observe the following relation for general vectors
  $\vec a, \vec b \in \bR^3$,
  \begin{equation}
    \label{expansionVec}
    \frac{1}{|\vec a + \varepsilon\vec b|}
    = \frac{1}{|\vec a|}
    - \frac{\vec a \cdot \vec b}{|\vec a|^3} \varepsilon + \cO(\varepsilon^2).
  \end{equation}
  Applying this to \eqref{eq:tangent_to_beam} and using $|\vec i| = 1$
  we get
  \begin{align}
    \bm{\vec i}(x)
    &:= \frac{\vec \xi}{|\vec \xi|}
      = \vec i + \vec g'(x) - (\vec i \cdot \vec g'(x)) \vec i
      +\cO\left(\left|\vec{g}'\right|^2\right)\\
    \label{eq:i_from_g}
    &= \vec i + w'(x) \vec j + v'(x) \vec k
      +\cO\left(\left|\vec{g}'\right|^2\right).
  \end{align}
  Comparing equation~\eqref{eq:i_from_g} with \eqref{eq:iRot} and
  taking into account \eqref{eq:omega1} we arrive to
  \eqref{eq:omega_lin}.

  For future reference we also list here the expansions of other vectors in
  the deformed basis,
  \begin{equation}
    \label{eq:deformed_basisxpand}
    \bm{\vec j}(x) = -w'(x) \vec i + \vec j + \eta(x) \vec k + \cO\left(\left|\vec{g}'\right|^2\right), \quad
    \bm{\vec k}(x) = -v'(x) \vec i - \eta(x) \vec j + \vec k + \cO\left(\left|\vec{g}'\right|^2\right)
  \end{equation}
\end{proof}

\begin{proof}[Proof of Theorem~\ref{primaryCond}]
  We start by addressing the vertex matching condition.
  Equation~\eqref{dispRigid} is simply the coordinate expansion (in
  the local coordinates of each edge) of the displacement continuity
  condition in Definition~\ref{def:rigid},
  equation~\eqref{eq:rigid_displacement}.
  Equation~\eqref{rotationRigid} is a direct application of
  Lemma~\ref{lem:RotationVector} to
  condition~\eqref{eq:rigid_rotation}.

  The form $h$ defined by \eqref{varEnergyForm} is obviously positive
  and symmetric:
  \begin{equation}
    \label{eq:positive_symm_def}
    h\left[\Psi,\Psi\right] \geq 0
    \qquad\mbox{and}\qquad
    h\left[\widetilde\Psi, \Psi\right] =
    \overline{h\left[\Psi, \widetilde\Psi\right]}.
  \end{equation}
  To establish that $h$ is closed, i.e. that the domain $\Dom(h)$ --- the
  subspace of $\dmax$ consisting of functions satisfying
  \eqref{dispRigid}-\eqref{rotationRigid} --- is complete with respect
  to the norm
  \begin{equation}
    \label{eq:h-norm}
    \|\Psi\|_h := \|\Psi\|_{\Ltwo} + h\left[\Psi,\Psi\right],
  \end{equation}
  we follow the standard procedure (see, for example, \cite[Proof of
  Thm 1.4.11]{BerKuc_graphs} or \cite[Proof of Thm 4.3]{GM20}).  First
  we note that the space $\dmax$ is complete with respect to its norm
  (the sum of Sobolev norms of individual spaces).  Next, the
  operation of taking traces --- vertex values of components of $\Psi$
  and their derivatives --- is continuous in the Sobolev norm due to
  the standard inequalities (see \cite[Lemma 4.2]{GM20}),
  \begin{align}
    \label{eq:Sobolev1_trace}
    &|f(\vt_e)|^2
    \leq \frac2{|e|} \|f\|^2_{L^2(e)} + |e|\, \|f'\|^2_{L^2(e)},
    \qquad \hspace{2.8mm} \mbox{for any } f\in H^1(e),
    \\
    \label{eq:Sobolev2_trace}
    &|f'(\vt_e)|^2
    \leq \frac2{|e|} \|f'\|^2_{L^2(e)} + |e|\, \|f''\|^2_{L^2(e)},
    \qquad \mbox{for any } f\in H^2(e),
  \end{align}
  where $|e|$ is the length of the edge $e$ and $\vt_e$ is one of its
  endpoints.  Therefore $\Dom(h)$ is a closed subspace of $\dmax$ and
  thus also complete with respect to the norm of $\dmax$.  Finally, we
  observe that the $h$-norm \eqref{eq:h-norm} is equivalent to the
  Sobolev norm of $\dmax$, i.e.
  \begin{equation}
    \label{eq:form_equiv}
    c \|\Psi\|_{\dmax}
    \leq \|\Psi\|_{\Ltwo} + h\left[\Psi,\Psi\right]
    \leq C \|\Psi\|_{\dmax},
  \end{equation}
  for suitable constants $c,C>0$.  Here, for components $v$ and $w$ we
  use the inequality (see \cite[Cor 4.2.7]{Burenkov_sobolev})
  \begin{equation}
    \label{eq:itermid_deriv}
    \|f'\|_{L^2(e)}
    \leq \alpha \left( \frac1{|e|} \|f\|_{L^2(e)} + |e|
      \|f''\|_{L^2(e)}\right),
  \end{equation}
  for some $\alpha>0$ independent of $f$.
\end{proof}

\subsection{Joint conditions for the self-adjoint operator}

The main result of this paper characterizes the Hamiltonian of the
frame as a self-adjoint differential operator on the metric graph.  
\begin{thm}
  \label{MainTheorem}
  Energy form~\eqref{varEnergyForm} on a beam frame with free rigid
  joints corresponds to the self-adjoint operator
  $H \colon \Ltwo \to \Ltwo$ acting as
  \begin{equation}
    \label{diffSystem}
    \Psi_e := \begin{pmatrix}
      v_e\\
      w_e\\
      u_e\\
      \eta_e
    \end{pmatrix}
    \mapsto
    \begin{pmatrix}
      a_e v_e''''\\
      b_e w_e''''\\
      -c_e u_e''\\
      -d_e \eta_e''
    \end{pmatrix}
  \end{equation}
  on every edge $e\in E$ of the graph.  The domain of the operator $H$
  consists of the functions
  \begin{equation}
  \label{eq:Sobolev_op}
  \Psi
  \in \mathop{\mathsmaller\bigoplus}_{e \in E} H^4(e)
  \oplus \mathop{\mathsmaller\bigoplus}_{e \in E} H^4(e)
  \oplus \mathop{\mathsmaller\bigoplus}_{e \in E} H^2(e)
  \oplus \mathop{\mathsmaller\bigoplus}_{e \in E} H^2(e)
  =: \Dmax,
  \end{equation}
  that satisfy, at each vertex $\vt \in V$,
  \begin{enumerate}
  \item continuity of displacement: for all $e, e' \sim \vt$, 
    \begin{equation}
      \label{primaryBcThmDisp}
      u_e \vec i_e + w_e \vec j_e + v_e \vec k_e =
      u_{e'} \vec i_{e'} + w_{e'} \vec j_{e'} + v_{e'} \vec k_{e'},
    \end{equation}
  \item continuity of rotation: for all $e, e' \sim \vt$,
    \begin{equation}
      \label{primaryBcThmRota}
      \eta_e \vec i_e - v_e' \vec j_e + w_e' \vec k_e =
      \eta_n \vec i_{e'} - v_{e'}' \vec j_{e'} + w_{e'}' \vec k_{e'},
    \end{equation}
  \item balance of forces 
    \begin{equation}
      \label{secondBcThmDisp}
      \sum_{e \sim \vt} s_\vt^e \left(c_e u_e' \vec i_e - b_e w_e''' \vec j_e
      - a_e v_e''' \vec k_e\right) = \vec0,
    \end{equation}
  \item balance of moments
    \begin{equation}
      \label{secondBcThmRota}
      \sum_{e \sim \vt} s_\vt^e \left(d_e \eta_e' \vec i_e - a_e v_e'' \vec j_e
      +  b_e w_e'' \vec k_e\right) = \vec0.
    \end{equation}
  \end{enumerate}
  Here $n_\vt$ is the degree of the vertex $\vt$, $s_\vt^e$ is the
  signed incidence indicator defined in
  section~\ref{sec:frames_description}, all functions are evaluated at
  the vertex $\vt$, and all derivatives on the edge $e$ are taken in
  the direction $\vec{i}_e$.
\end{thm}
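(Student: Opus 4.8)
The plan is to produce $H$ from the abstract theory of semibounded forms and then to identify it by hand. By Theorem~\ref{primaryCond} the form $h$ is densely defined, symmetric, nonnegative and closed, so Kato's first representation theorem (see, e.g., \cite{BerKuc_graphs}) yields a unique self-adjoint operator $H$ with $\Dom(H)\subseteq\Dom(h)$ satisfying $h[\Phi,\Psi]=\ip{\Phi}{H\Psi}$ for all $\Phi\in\Dom(h)$, $\Psi\in\Dom(H)$; moreover $\Psi\in\Dom(H)$ precisely when $\Psi\in\Dom(h)$ and $\Phi\mapsto h[\Phi,\Psi]$ extends to an $\Ltwo$-bounded functional, in which case $H\Psi$ is its Riesz representative. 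Thus self-adjointness is automatic, and the whole task reduces to computing the action of $H$ and showing that $\Dom(H)$ equals $\Dmax$ restricted by the four conditions \eqref{primaryBcThmDisp}--\eqref{secondBcThmRota}.

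First I would recover the differential action together with the interior regularity. Fix $\Psi\in\Dom(H)$ and test against functions $\Phi\in\Dom(h)$ all of whose components are smooth and compactly supported in the interior of a single edge $e$; such $\Phi$ satisfy the vertex conditions vacuously. For these there are no boundary contributions, and integrating by parts in \eqref{varEnergyForm} identifies, as distributions on $e$, the quantities $a_e v_e''''$, $b_e w_e''''$, $-c_e u_e''$ and $-d_e\eta_e''$ with the corresponding components of $H\Psi\in\Ltwo$. Since the right-hand sides lie in $L^2(e)$ while a priori $v_e,w_e\in H^2(e)$ and $u_e,\eta_e\in H^1(e)$, a standard elliptic bootstrap upgrades the regularity to $v_e,w_e\in H^4(e)$ and $u_e,\eta_e\in H^2(e)$. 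Hence $\Psi\in\Dmax$ and $H$ acts by \eqref{diffSystem}.

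With the regularity secured I would extract the natural boundary conditions. Integrate by parts in \eqref{varEnergyForm} for an arbitrary $\Phi\in\Dom(h)$, this time keeping the endpoint terms; subtracting $\ip{\Phi}{H\Psi}$ leaves only vertex contributions, which must vanish for every admissible $\Phi$. On each edge these split into a displacement-type group pairing $(\widetilde u_e,\widetilde w_e,\widetilde v_e)$ against $(c_e u_e',-b_e w_e''',-a_e v_e''')$ and a rotation-type group pairing $(\widetilde\eta_e,-\widetilde v_e',\widetilde w_e')$ against $(d_e\eta_e',-a_e v_e'',b_e w_e'')$ --- precisely the $\vec i_e,\vec j_e,\vec k_e$ components of the force vector $\vec F_e(\vt)$ in \eqref{secondBcThmDisp} and the moment vector $\vec M_e(\vt)$ in \eqref{secondBcThmRota}. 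Recognising each group as a Euclidean dot product and collecting all edges at a fixed $\vt$ (with signs $s_\vt^e$ coming from the $[\,\cdot\,]_0^{\ell_e}$ evaluation, up to an overall sign immaterial below) rewrites the vertex term as
\[
\overline{\vec{\widetilde G}(\vt)}\cdot\!\!\sum_{e\sim\vt}s_\vt^e\,\vec F_e(\vt)\;+\;\overline{\vec{\widetilde\Theta}(\vt)}\cdot\!\!\sum_{e\sim\vt}s_\vt^e\,\vec M_e(\vt),
\]
where $\vec{\widetilde G}(\vt)$ and $\vec{\widetilde\Theta}(\vt)$ are the vertex displacement and rotation of the \emph{test} field, which are edge-independent because $\Phi\in\Dom(h)$ obeys \eqref{dispRigid}--\eqref{rotationRigid}.

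Finally I would argue that $\vec{\widetilde G}(\vt)$ and $\vec{\widetilde\Theta}(\vt)$ may be prescribed arbitrarily and independently in $\bC^3\times\bC^3$, one vertex at a time: given target vectors I set, on each incident edge, the local components they dictate for $\widetilde u_e,\widetilde w_e,\widetilde v_e$ and $\widetilde\eta_e,\widetilde v_e',\widetilde w_e'$, multiply by a cutoff supported near $\vt$, and thereby obtain a $\Phi\in\Dom(h)$ realizing those data at $\vt$ and vanishing with its derivatives at all other vertices. Since the displayed pairing must vanish for every such $\Phi$, both $\sum_{e\sim\vt}s_\vt^e\vec F_e(\vt)=\vec0$ and $\sum_{e\sim\vt}s_\vt^e\vec M_e(\vt)=\vec0$, which are \eqref{secondBcThmDisp}--\eqref{secondBcThmRota}; conversely, any $\Psi\in\Dmax$ obeying all four conditions makes the boundary terms vanish and hence lies in $\Dom(H)$ by Kato's characterization, giving the stated domain exactly. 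I expect the main obstacle to be the bookkeeping in the last two paragraphs: tracking the origin/terminus signs $s_\vt^e$ and the direction of differentiation when passing between the edge-local bases, and establishing the surjectivity of the vertex-trace map onto $\bC^3\times\bC^3$ subject to the continuity constraints, since it is exactly this surjectivity that promotes ``the pairing vanishes'' to ``each force and moment sum vanishes.''
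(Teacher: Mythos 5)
Your proposal is correct, but it follows a genuinely different route from the paper's. The paper starts from the operator defined in the theorem statement and proves directly that it is self-adjoint: it sandwiches $H$ between a minimal operator $H_{\mathrm{min}}$ (all vertex traces zero) and its adjoint $H_{\mathrm{min}}^*$, cites the textbook computation $\Dom(H_{\mathrm{min}}^*)=\Dmax$ to bound $\Dom(H^*)$, establishes symmetry via Green's identity, and then shows $\Dom(H^*)\subset\Dom(H)$ by ``a bit of linear algebra''; uniqueness in the representation theorem then identifies this self-adjoint operator with the one generated by $h$. You instead take the abstract operator furnished by Kato's first representation theorem --- so self-adjointness is free --- and identify it: interior regularity by testing against compactly supported fields and a one-dimensional elliptic bootstrap (which legitimately upgrades $v_e,w_e\in H^2$, $u_e,\eta_e\in H^1$ to $H^4$ and $H^2$ once the distributional images lie in $L^2$), then the natural boundary conditions from the vanishing of the vertex pairing, and the converse inclusion from Kato's domain characterization. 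The trade-off is clear: your route never computes an adjoint, but must prove the regularity by hand (the paper delegates exactly this to the known structure of $H_{\mathrm{min}}^*$), and it must make explicit the surjectivity of the vertex trace map onto $\bC^3\times\bC^3$ subject to the rigidity constraints --- which is in fact the same linear-algebra step the paper leaves implicit in its final sentence, so your treatment of that point is, if anything, more complete. Both arguments share the same computational core, namely the integration by parts \eqref{boundaryTerm} and the factorization of the boundary term through the edge-independent vertex displacement and rotation vectors.
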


\begin{remark}
  Knowing that the operator is self-adjoint opens up many established
  mathematical tools for understanding the time evolution of the
  corresponding wave equation,
  \begin{equation*}
    \frac{\partial^2 \Psi_e}{\partial t^2} = H \Psi_e, \qquad e \in E.
  \end{equation*}
  For example, from the Spectral Theorem we immediately get
  orthogonality of the eigenfunctions (studied, in the engineering
  literature, in \cite{ChaWil_sv00}).  With little extra work
  (establishing compactness of the resolvent on frames which contain
  finitely many beams, each of finite length), one obtains existence
  of an eigenfunction basis.
\end{remark}

\begin{remark}  
  \label{rem:conditions_meaning}
  Conditions (\ref{primaryBcThmDisp}) and (\ref{primaryBcThmRota}),
  inherited from the domain of the sesquilinear form, encapsulate the
  rigid vertex assumption. Two additional conditions
  (\ref{secondBcThmDisp}) and (\ref{secondBcThmRota}) are imposed to
  guarantee the operator is self-adjoint.  These two conditions,
  derived from operator-theoretic considerations, have important
  physical meaning.  Namely, they balance the forces and moments
  developed at the vertex as a result of relative displacement and
  angular displacement of edges adjacent to the vertex.  In more
  details:
  \begin{enumerate}
  \item [(i)] in condition (\ref{secondBcThmDisp}), $c_e u_e'$ is the
    force developed in direction $\vec i_e$ due to in-axis tension of
    the edge $e$, while $a_e v_e'''$ and $b_e w_e'''$ are shear forces
    developed inside the edge in the directions $\vec k_e$ and
    $\vec j_e$, respectively.
  	
  \item[(ii)] in condition (\ref{secondBcThmRota}), $d_e \eta_e'$ is
    the moment associated with angular displacement developed in direction $\vec i_e$ due to in-axis
    rotation of the edge $e$, while $a_e v_e''$ and $b_e w_e''$ represent
    bending moments of the edge $e$ in the directions $\vec j_e$ and
    $\vec k_e$, respectively.
  \end{enumerate}
  More details about the mechanics meaning of these quantities can be
  found in \cite{Ye_StructuralAndStress}, in particular in sections 4.5
  and 6.2 thereof.
\end{remark}

\begin{remark}
  \label{rem:edge_flip}
  It is instructive to see how the individual elements of conditions
  \eqref{primaryBcThmDisp}--\eqref{secondBcThmRota} change under the
  change of orientation of an edge $e$.  Consider, for example, the
  change of local basis (omitting subscript $e$ to avoid clutter)
  \begin{equation}
    \label{eq:local_basis_change}
    \{\vec i, \vec j, \vec k\} \mapsto \{-\vec i, -\vec j, \vec k\}.
  \end{equation}
  For convenience, we recall the definitions of the individual degrees
  of freedom,
  \begin{equation}
    \label{eq:field_comp_recall}
    v = \vec k \cdot \vec g, \quad
    w = \vec j \cdot \vec g, \quad
    u = \vec i \cdot \vec g, \quad
    \eta = \cR \vec j \cdot \vec k,
  \end{equation}
  where $\vec g$ and $\cR$ are basis-independent quantities.  In
  addition, all derivatives are taken in the direction $\vec i$,
  resulting in the sign change.  We get
  \begin{align*}
    &v \mapsto v, &&w\mapsto -w, &&u \mapsto -u, &&\eta\mapsto -\eta,\\
    &v'\mapsto -v', && w'\mapsto w', &&u'\mapsto u', &&\eta'\mapsto \eta',\\
    &v''\mapsto v'', && w''\mapsto -w'', \\
    &v'''\mapsto -v''', && w'''\mapsto w''', &&&&s_\vt^e \mapsto -s_\vt^e.
  \end{align*}
  Combining with \eqref{eq:local_basis_change}, we observe that all of the quantities
  \begin{equation*}
    u\vec i + w\vec j + v\vec k,\quad
    \eta\vec i - v'\vec j + w'\vec k,\quad
    s_\vt^e\big(cu'\vec i - bw'''\vec j - a v'''\vec k\big),
    \quad\mbox{and}\quad
    s_\vt^e\big(d\eta'\vec i - av''\vec j + bw''\vec k\big)   
  \end{equation*}
  remain invariant.
\end{remark}

\begin{remark}
  The formal proof of Theorem~\ref{MainTheorem} given below is built
  around ``knowing the right answer'' and is not particularly
  instructive.  In the present remark we \emph{derive the right answer} while
  sacrificing some of the mathematical rigor.

  We would like to represent the sesquilinear form as
  $h[\widetilde\Psi, \Psi] = \langle \widetilde\Psi, H\Psi\rangle$, so
  we integrate equation (\ref{varEnergyForm}) by parts.  This requires
  the field $\Psi$ to be sufficiently smooth, an issue we ignore for
  now.  We get
  \begin{equation}
    \label{beamDiff}
    h\left[\widetilde\Psi,\Psi\right]
    = \sum_{e \in E} \int_e \big(\cc{\widetilde v_e}v_e''''
    + b_e \cc{\widetilde w_e} w_e'''' - c_e \cc{\widetilde u_e} u_e''
    - d_e \cc{\widetilde \eta_e} \eta_e'' \big) dx +  \sum_{\vt \in V} \cB_v,
  \end{equation}
  where the boundary term $\cB_\vt$ at vertex $\vt$ has the form
  \begin{equation}
    \label{boundaryTerm}
    \cB_\vt := \sum_{e \sim \vt} s_\vt^e
    \left(a_e \cc{\widetilde v_e'} v''_e
      - a_e \cc{\widetilde v_e} v'''_e
      + b_s \cc{\widetilde w_e'} w''_e
      - b_s \cc{\widetilde w_e} w'''_e
      + c_e \cc{\widetilde u_e} u'_s
      + d_e \cc{\widetilde \eta_e} \eta'_e \right).
  \end{equation}
  Since the field $(\tilde v, \tilde w, \tilde u, \tilde \eta) $ is
  arbitrary, the form of the differential operator $H$ in
  $\langle \widetilde\Psi, H\Psi\rangle$ is clear, as long as the
  boundary terms disappear: $\cB_\vt = 0$ for all $\vt$.

  Applying the fact that $\{\vec i_e, \vec j_e, \vec k_e\}$ is an
  orthonormal basis, for each $e \sim \vt$ we can write
   \begin{equation}
  \label{proofSecondCond1}
  c_e \cc{\tilde u_e} u'_e - b_e \cc{\tilde w_e} w'''_e 
  - a_e \cc{\tilde v_e} v'''_e  \cc{\tilde v_e}
  = \underbrace{\cc{\big(\tilde u_e \vec i_e + \tilde w_e \vec j_e
  	+ \tilde v_e \, \vec k_e \big)}}_{= \overline{\vec{\tilde g}_e}(\vt)} \cdot \big(c_e u'_e \vec i_e - b_e w'''_e \vec j_e
  - a_e v'''_e  \vec k_e \big)
  \end{equation}
  and similarly 
  \begin{equation}
  \label{proofSecondCond2}
  d_e \cc{\tilde \eta_e} \eta'_e + b_e \cc{\tilde w_e'} w''_e
  + a_e \cc{\tilde v_e'} v''_e 
  = \underbrace{\cc{\big(\tilde \eta_e \vec i_e - \tilde v_e' \vec j_e
  	+ \tilde w_e' \vec k_e \big)}}_{= \overline{\vec{\tilde \omega}_e}(\vt)} \cdot \big(d_e \eta'_e \vec i_e - a_e v''_e \vec j_e
  + b_e w''_e \vec k_e \big)
  \end{equation}
  Summing over $e \sim \vt$ along with the fact that the displacement
  $\overline{\vec{\tilde g}_e}(\vt)$ and rotation vector
  $\overline{\vec{\tilde \omega}_e}(\vt)$ are independent of $e \sim \vt$, we
  arrive to the condition
  \begin{equation}
    \label{eq:B_final}
    \cB_\vt = \overline{\vec{\tilde g}_e}(\vt) \cdot
    \sum_{e \sim \vt} s_\vt^e \big(c_e u'_e \vec i_e - b_e w'''_e \vec j_e
    - a_e v'''_e  \vec k_e \big)
    + \overline{\vec{\tilde \omega}_e}(\vt) \cdot
    \sum_{e \sim \vt} s_\vt^e \big(d_e \eta'_e \vec i_e - a_e v''_e \vec j_e
    + b_e w''_e \vec k_e \big) = 0
  \end{equation}
  Since the vectors $\overline{\vec{\tilde g}_e}(\vt)$ and
  $\overline{\vec{\tilde \omega}_e}(\vt)$ are arbitrary, $\cB_\vt = 0$ only if the
  two sums in front of $\overline{\vec{\tilde g}_e}(\vt)$ and $\overline{\vec{\tilde \omega}_e}(\vt)$
  vanish. This results in conditions
  \eqref{secondBcThmDisp} and \eqref{secondBcThmRota} correspondingly.
\end{remark}

We now proceed to the formal proof of Theorem~\ref{MainTheorem}.

\begin{proof}[Proof of Theorem~\ref{MainTheorem}]
  By the Representation Theorem of sesquilinear forms (see
  \cite[section VI.2]{Kato_perturbation} or \cite[Thm.~10.7 and
  Cor.~10.8]{Schmudgen_unboundedSAO}), the form $h$ of
  Theorem~\ref{primaryCond} corresponds to a self-adjoint operator $H$
  such that $\Dom(H) \subset \Dom(h)$ and
  \begin{equation}
    \label{eq:form_op_correspondence}
    h[\widetilde\Psi, \Psi] = \langle \widetilde\Psi, H\Psi\rangle
    \quad\mbox{for all }\widetilde\Psi \in \Dom(h),\ \Psi\in\Dom(H).
  \end{equation}
  Moreover, the self-adjoint operator satisfying this condition is
  unique.

  Since $\Psi \in\Dom(H) \subset \Dmax$, the integration by parts
  performed in \eqref{beamDiff} is valid and equation
  \eqref{eq:B_final} shows that condition
  \eqref{eq:form_op_correspondence} is satisfied.  Therefore we only
  need to establish that the operator $H$ defined in the Theorem is
  indeed self-adjoint, i.e. $H^* = H$.

  The first step is to obtain a ``bound'' on the domain of $H^*$:
  prove that $\Dom(H^*) \subset \Dmax$ which guarantees enough
  smoothness of $\tilde{\Psi} \in \Dom(H^*)$ to integrate by parts
  freely.  The standard trick is as follows: consider the operator
  $H_{\mathrm{min}}$ which is given by the differential expression
  \eqref{diffSystem} with the domain consisting of functions from
  $\Dmax$ with additional conditions\footnote{These conditions
    are too restrictive for self-adjointness.} at every vertex $\vt$
  \begin{equation}
    \label{eq:all0}
    v_e = v_e' = v_e'' = v_e''' = 0,
    \quad w_e = w_e' = w_e'' = w_e''' = 0,
    \quad u_e = u_e' = 0,
    \quad \eta_e = \eta_e' = 0.
  \end{equation}
  The operator $H_{\mathrm{min}}$ decomposes into a direct sum of
  operators each acting on a single edge $e$ and a single component
  $u$, $v$, $w$ or $\eta$.  The standard calculation (see
  \cite[Example 1.4]{Schmudgen_unboundedSAO}) shows that the adjoint
  $H_{\mathrm{min}}^*$ is given by the same differential expression
  \eqref{diffSystem} with the domain equal to \emph{all} of
  $\Dmax$.  Now we obviously have
  $H_{\mathrm{min}} \subset H \subset H_{\mathrm{min}}^*$ (all
  operators act the same and the inclusion should be understood as
  inclusion of domains), therefore
  $H_{\mathrm{min}} \subset H^* \subset H_{\mathrm{min}}^*$, in
  particular $\Dom(H^*) \subset \Dmax$ and $H^*$ also acts as
  \eqref{diffSystem}.

  The second step is to observe that for \emph{any} operators $S_1$ and
  $S_2$ satisfying 
  $H_{\mathrm{min}} \subseteq S_{i} \subseteq H_{\mathrm{min}}^*$,
  $i=1,2$, we can integrate by parts to get
  \begin{equation}
    \label{eq:boundaryForm}
    \langle \widetilde\Psi, S_1 \Psi\rangle
    - \langle S_2 \widetilde\Psi, \Psi\rangle = \Omega[\widetilde\Psi,\Psi],
  \end{equation}
  where
  \begin{align}
    \label{eq:cB_def}
    \Omega[\widetilde\Psi,\Psi] = \sum_{\vt, e \sim \vt}
    &\vec{g}_e
    \cdot \big(c_e \tilde{u}'_e \vec i_e - b_e \tilde{w}'''_e \vec j_e
    - a_e \tilde{v}'''_e \vec k_e \big) s_\vt^e
    + \vec{\omega}_e
    \cdot \big(d_e \tilde{\eta}'_e \vec i_e - a_e \tilde{v}''_e \vec j_e
    + b_e \tilde{w}''_e \vec k_e \big) s_\vt^e \\
    &- \vec{\tilde g}_e
    \cdot \big(c_e u'_e \vec i_e - b_e w'''_e \vec j_e
    - a_e v'''_e  \vec k_e \big) s_\vt^e
    - \vec{\tilde \omega}_e
    \cdot \big(d_e \eta'_e \vec i_e - a_e v''_e \vec j_e
    + b_e w''_e \vec k_e \big) s_\vt^e,
  \end{align}
  with all functions evaluated at $\vt$.  This is known as the
  ``Green's identity''.
  
  The third step is to show that $H$ is symmetric, i.e.
  \begin{equation}
    \label{eq:symmetricH}
    \langle \widetilde\Psi, H\Psi\rangle
    - \langle H \widetilde\Psi, \Psi\rangle = 0
    \quad\mbox{for all }\Psi, \widetilde\Psi \in \Dom(H).
  \end{equation}
  We use \eqref{eq:boundaryForm} with $S_1=S_2=H$ and note that both
  $\Psi$ and $\tilde\Psi$ satisfy conditions
  \eqref{primaryBcThmDisp}--\eqref{secondBcThmRota}, yielding
  $\Omega[\widetilde\Psi,\Psi] = 0$ as required.  This establishes that our
  conditions are restrictive enough: $\Dom(H) \subset \Dom(H^*)$.

  Finally, we show that the conditions are not excessively
  restrictive, i.e. $\Dom(H^*) \subset \Dom(H)$.  We start with a
  $\widetilde\Psi \in \Dom(H^*)$  which means that
  \begin{equation}
    \label{eq:adjoint_def}
    \langle \widetilde\Psi, H\Psi\rangle
    - \langle H^* \widetilde\Psi, \Psi\rangle = 0
    \quad\mbox{for all }\Psi\in\Dom(H).
  \end{equation}
  Using \eqref{eq:boundaryForm} again,
  now with $S_1=H$ and $S_2=H^*$, we get
  $\Omega[\widetilde\Psi,\Psi] = 0$.  Using that $\Psi$ is arbitrary within
  $\Dom(H)$, a bit of linear algebra proves that $\widetilde\Psi$ must
  also satisfy \eqref{primaryBcThmDisp}--\eqref{secondBcThmRota}.
\end{proof}

\section{Some extensions and special cases}

\subsection{Dummy vertices}

As an important check on the consistency of the conditions we
derived we show that any point in the interior of a beam can be considered a
free rigid joint of two co-linear beams (and vice versa).

\begin{lem}
  \label{coLinearEdges}
  Let $\vt$ be a free rigid joint of degree 2 and the beams incident
  to it have identical material properties and be co-linear, i.e.\
  $\{\vec i_1, \vec j_1, \vec k_1\} = \{\vec i_2, \vec j_2, \vec
  k_2\}$ up to a sign change of an even number of vectors. Then
  $\Psi_{e_1}$ and $\Psi_{e_2}$ agree at $\vt$ including derivatives
  (of order 1 for $u$ and $\eta$ and order 3 for $v$ and $w$) and
  therefore the union of $e_1$ and $e_2$ may be represented as a
  single edge $e$ with $\Psi_e \in \Dmax$.
\end{lem}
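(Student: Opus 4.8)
The plan is to reduce the whole statement to reading off the four vertex conditions of Theorem~\ref{MainTheorem} in a single common basis. First I would invoke Remark~\ref{rem:edge_flip} to fix a convenient orientation: since the two bases agree up to an even number of sign flips, I can reorient the edges so that $\{\vec i_1,\vec j_1,\vec k_1\}=\{\vec i_2,\vec j_2,\vec k_2\}=:\{\vec i,\vec j,\vec k\}$, with $\vt$ the terminus of $e_1$ and the origin of $e_2$ (the natural ``pass-through'' orientation), so that $s_\vt^{e_1}=-1$ and $s_\vt^{e_2}=+1$. Remark~\ref{rem:edge_flip} guarantees that every quantity appearing in \eqref{primaryBcThmDisp}--\eqref{secondBcThmRota} is invariant under such a reorientation, so no generality is lost. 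With this choice, $\vec i$ points in the direction of increasing arc-length on \emph{both} edges, so a derivative taken in the direction $\vec i_e$ is $d/dx$ on each piece, and matching such derivatives is exactly what is needed for a smooth gluing.

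With the common basis in hand, the core of the argument is to compare components. By orthonormality of $\{\vec i,\vec j,\vec k\}$, condition \eqref{primaryBcThmDisp} splits into $u_{e_1}=u_{e_2}$, $w_{e_1}=w_{e_2}$, $v_{e_1}=v_{e_2}$, while condition \eqref{primaryBcThmRota} splits into $\eta_{e_1}=\eta_{e_2}$, $v_{e_1}'=v_{e_2}'$, $w_{e_1}'=w_{e_2}'$, all evaluated at $\vt$. Using the identical material parameters and $s_\vt^{e_1}=-s_\vt^{e_2}$, the balance of forces \eqref{secondBcThmDisp} yields $u_{e_1}'=u_{e_2}'$, $w_{e_1}'''=w_{e_2}'''$, $v_{e_1}'''=v_{e_2}'''$, and the balance of moments \eqref{secondBcThmRota} yields $\eta_{e_1}'=\eta_{e_2}'$, $v_{e_1}''=v_{e_2}''$, $w_{e_1}''=w_{e_2}''$.

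Collecting these identities, $u$ and $\eta$ match together with their first derivatives, while $v$ and $w$ match together with all derivatives up to order three. I would then parametrize the union of $e_1$ and $e_2$ as a single interval of length $\ell_{e_1}+\ell_{e_2}$ and invoke the standard one-dimensional gluing fact: a function that is $H^{k}$ on each of two abutting subintervals and whose one-sided derivatives up to order $k-1$ agree at the junction lies in $H^{k}$ on the whole interval. Applying this with $k=4$ to $v,w$ and $k=2$ to $u,\eta$ shows that the glued field $\Psi_e$ lies in $\Dmax$; since the merged function obeys the same differential expression \eqref{diffSystem} away from the (now interior) point $\vt$, this completes the proof.

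The computations are routine; the single point requiring genuine care is the orientation bookkeeping of the first paragraph. One must verify that the even-sign-flip hypothesis is precisely what keeps both bases right-handed and permits the simultaneous pass-through choice, and that with this choice the directions $\vec i_{e_1},\vec i_{e_2}$ both align with increasing arc-length on the combined edge, so that the component identities above express continuity of the $d/dx$-derivatives rather than of oppositely signed ones. Once this alignment is pinned down, the invariance recorded in Remark~\ref{rem:edge_flip} makes the reduction rigorous and everything else is immediate.
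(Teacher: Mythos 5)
Your proposal is correct and follows essentially the same route as the paper's own proof: reduce to a common basis (invoking Remark~\ref{rem:edge_flip} for the sign-flip cases), read off component-wise agreement of values and derivatives from the four vertex conditions \eqref{primaryBcThmDisp}--\eqref{secondBcThmRota} using $s_\vt^{e_1}=-s_\vt^{e_2}$ and the identical material constants, and conclude. The only difference is that you make explicit the one-dimensional Sobolev gluing fact ($H^k$ on abutting intervals with matching derivatives up to order $k-1$ gives $H^k$ on the union), which the paper leaves implicit.
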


\begin{proof}
  Assume, for now, that
  $\{\vec i_1, \vec j_1, \vec k_1\} = \{\vec i_2, \vec j_2, \vec
  k_2\}$.  This means that the end $e_1$ meets the origin of $e_2$ (or
  vice versa) and therefore $s_\vt^{e_1} = -s_\vt^{e_2}$.  Evaluating
  components of conditions
  \eqref{primaryBcThmDisp}--\eqref{secondBcThmRota} with respect to
  the common basis $\{\vec i_1, \vec j_1, \vec k_1\}$, we get for the
  values of the corresponding functions at $\vt$
  \begin{equation}
    \label{eq:deg2agreement}
    u_{e_1} = u_{e_2},\ u_{e_1}' - u_{e_2}' = 0,
    \qquad
    v_{e_1} = v_{e_2},\ v_{e_1}' = v_{e_2}',\ 
    v_{e_1}'' - v_{e_2}''=0,\ v_{e_1}''' - v_{e_2}''' = 0,
  \end{equation}
  and so on.  To understand other cases of bases arrangements, for
  example, the case
  $\{\vec i_1, \vec j_1, \vec k_1\} = \{-\vec i_2, -\vec j_2, \vec
  k_2\}$, we refer to Remark~\ref{rem:edge_flip}.
\end{proof}

\subsection{Planar frame}

A particularly well-studied case in the literature is the planar
frame, which in its un-deformed configuration is embedded in a two
dimensional plane.  Without loss of generality assume that this plane
has normal vector $\vec E_3$.  We also assume that for all edges $e$,
$\vec k_e$ is chosen to be the same, $\vec k_e = \vec k = \vec E_3$
(in particular this requires that the principal axes of inertia of
every beam can be aligned accordingly).

In this situation the Hamiltonian of the frame decouples into two
operators, one linking out-of-plane displacement with angular
displacement (cf.\ Chapter 1 in \cite{PDEOnMultiStructures01}) and the
other linking in-plane displacement with axial displacement.

\begin{cor}
  \label{decouplingPlane}
  Free planar network of beams with free rigid joints is described by
  Hamiltonian $H = H_1 \oplus H_2$ where $H_1$ is a differential operator acting as
  \begin{equation}
    \label{eq:operatorH_planar_inplane}
    \Psi_{e,1} :=
    \begin{pmatrix}
      v_e\\
      \eta_e
    \end{pmatrix}
    \mapsto
    \begin{pmatrix}
      a_e v_e''''\\
      -d_e \eta_e''
    \end{pmatrix}
  \end{equation}
  on functions
  $\Psi_{e,1} \in \bigoplus_{e\in E} H^4(e) \oplus \bigoplus_{e\in E}
  H^2(e)$ satisfying at each vertex $\vt$ the primary (form domain)
  conditions
  \begin{subequations}
    \label{vertexCondPlanarPrimaryH1}
    \begin{align}
      v_e
      &= v_{e'}, \qquad\qquad \forall e,e'\sim \vt,
        \label{primaryVertexCond1H1} \\
      \eta_1 \vec i_1 - v_1' \vec j_1
      &= \eta_{e'} \vec i_{e'} - v_{e'}' \vec j_{e'}, \qquad\qquad \forall e,e'\sim \vt,
        \label{primaryVertexCond2H1}
    \end{align}
  \end{subequations}
  and the secondary conditions
  \begin{subequations}
    \label{vertexCondPlanarSecondaryH1}
    \begin{gather}
    \label{secondVertexCond2H1}
    \sum_{e \sim \vt} s_\vt^e a_e v'''_e
    = 0,\\
    \label{secondVertexCond1H1}
    \sum_{e \sim \vt} s_\vt^e \big(d_e \eta'_e \vec i_e - a_e v''_e \vec j_e\big)
    = \vec 0.
    \end{gather}
  \end{subequations}

  The operator $H_2$ acts as 
  \begin{equation}
    \Psi_{e,2} :=
    \begin{pmatrix}
      w_e\\
      u_e
    \end{pmatrix}
    \mapsto
    \begin{pmatrix}
      b_e w_e''''\\
      -c_e u_e''
    \end{pmatrix}
  \end{equation}
  on functions
  $\Psi_{e,2} \in \bigoplus_{e\in E} H^4(e) \oplus \bigoplus_{e\in E}
  H^2(e)$ satisfying at each vertex $\vt$ the primary (form domain)
  conditions
  \begin{subequations}
    \label{vertexCondPlanarPrimaryH2}
    \begin{align}
      u_e \vec i_e + w_e \vec j_e
      &= u_{e'} \vec i_{e'} + w_{e'} \vec j_{e'}, \qquad\qquad \forall e,e'\sim \vt,
        \label{primaryVertex2H2} \\
      w_e' &= w_{e'}',  \qquad\qquad \forall e,e'\sim \vt,
             \label{primaryVertex1H2}
    \end{align}
  \end{subequations}
  and secondary conditions
  \begin{subequations}
    \label{vertexCondPlanarSecondaryH2}
    \begin{gather}
      \sum_{e \sim \vt} s_\vt^e \big(c_e u'_e \vec i_e - b_e w'''_e \vec j_e\big)
      = \vec 0,
      \label{secondVertex1H2} \\%
      \sum_{e \sim \vt} s_\vt^e b_e w''_e
      = \vec 0.
      \label{secondVertex2H2}
    \end{gather}
  \end{subequations}
\end{cor}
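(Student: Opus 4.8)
The plan is to exploit the single geometric hypothesis $\vec k_e = \vec E_3$ for every edge, which forces all the local in-plane vectors $\{\vec i_e, \vec j_e\}$ to lie in the plane orthogonal to $\vec E_3$. The whole argument is then a projection: every vector identity in Theorem~\ref{MainTheorem} is assembled from components along $\vec i_e$, $\vec j_e$, $\vec k_e$, so each one splits orthogonally into its $\vec E_3 = \vec k$-component (a scalar equation) and its in-plane component (a vector equation inside the plane). I would apply this projection to each of the four vertex conditions \eqref{primaryBcThmDisp}--\eqref{secondBcThmRota} in turn and simply track where the pieces land.

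Carrying out that bookkeeping: projecting displacement continuity \eqref{primaryBcThmDisp} onto $\vec k$ gives $v_e = v_{e'}$, i.e.\ \eqref{primaryVertexCond1H1}, while its in-plane part gives the continuity of $u_e\vec i_e + w_e\vec j_e$, i.e.\ \eqref{primaryVertex2H2}. Projecting rotation continuity \eqref{primaryBcThmRota} onto $\vec k$ gives $w_e' = w_{e'}'$, i.e.\ \eqref{primaryVertex1H2}, and its in-plane part gives continuity of $\eta_e\vec i_e - v_e'\vec j_e$, i.e.\ \eqref{primaryVertexCond2H1}. The $\vec k$-component of the force balance \eqref{secondBcThmDisp} is $\sum_{e\sim\vt} s_\vt^e a_e v_e''' = 0$ (this is \eqref{secondVertexCond2H1}), its in-plane part is \eqref{secondVertex1H2}; and the $\vec k$-component of the moment balance \eqref{secondBcThmRota} is $\sum_{e\sim\vt} s_\vt^e b_e w_e'' = 0$ (this is \eqref{secondVertex2H2}), its in-plane part is \eqref{secondVertexCond1H1}. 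Collecting the outcome, every resulting condition that involves $(v,\eta)$ is exactly \eqref{vertexCondPlanarPrimaryH1}--\eqref{vertexCondPlanarSecondaryH1}, and every condition involving $(w,u)$ is exactly \eqref{vertexCondPlanarPrimaryH2}--\eqref{vertexCondPlanarSecondaryH2}; no condition mixes the two pairs.

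To finish, I would observe that the differential expression \eqref{diffSystem} is already diagonal in the four fields ($v\mapsto a_e v''''$, $\eta\mapsto -d_e\eta''$, $w\mapsto b_e w''''$, $u\mapsto -c_e u''$). Since the action is diagonal and the domain conditions split into a set constraining only $(v,\eta)$ and a disjoint set constraining only $(w,u)$, the Hilbert space $\Ltwo$ decomposes orthogonally into the subspace $\cH_1$ spanned by the $v$- and $\eta$-components and the subspace $\cH_2$ spanned by the $w$- and $u$-components. Both are reducing subspaces for $H$, so $H = H_1\oplus H_2$ with $H_1$, $H_2$ acting and with domains as stated; each $H_i$ is self-adjoint, either as the restriction of the self-adjoint $H$ to a reducing subspace, or equivalently by applying Theorems~\ref{primaryCond}--\ref{MainTheorem} to the decoupled sub-forms obtained by grouping \eqref{varEnergyForm} as $\int_e(a_e|v_e''|^2 + d_e|\eta_e'|^2)$ and $\int_e(b_e|w_e''|^2 + c_e|u_e'|^2)$.

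The argument is essentially routine bookkeeping, so the ``hard part'' is only a matter of care rather than depth: I must verify that the orthogonal projection onto $\vec E_3$ and onto the plane truly separates the fields with no surviving cross-terms. The crucial point is that $v$ multiplies $\vec k$ in the displacement identity whereas it is $w'$ that multiplies $\vec k$ in the rotation identity (with the roles exchanged in-plane), and likewise $v'''$ versus $w''$ in the force and moment balances. This clean separation relies on the uniform choice $\vec k_e = \vec E_3$ pinning the out-of-plane direction to the third basis vector on every edge, not merely on abstract planarity of the frame; I would flag that the alignment of the principal inertia axis $\vec k_e$ with the normal is exactly what makes the out-of-plane displacement $v$ and the angular field $\eta$ decouple from the in-plane pair $(w,u)$.
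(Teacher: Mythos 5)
Your proposal is correct and follows essentially the same route as the paper: the paper's proof likewise notes that the differential expression \eqref{diffSystem} is already block-diagonal and then obtains the splitting of the vertex conditions by projecting \eqref{primaryBcThmDisp}--\eqref{secondBcThmRota} onto the common normal $\vec k$ and its orthogonal complement. Your version merely spells out the component-by-component bookkeeping (which lands exactly where you say it does) and makes explicit the reducing-subspace conclusion that the paper leaves implicit.
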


\begin{proof}
  The differential expression for the operator $H$ is already in the
  ``block-diagonal'' form, see \eqref{diffSystem}, so it remains to
  show that the vertex conditions decompose as described.  But that
  follows directly from projecting conditions
  \eqref{primaryBcThmDisp}--\eqref{secondBcThmRota} onto the common
  normal $\vec k$ and onto its orthogonal complement.
\end{proof}

One of the vertex conditions used in mathematical literature dealing
with planar beam networks is that the beams remain locally planar when
deformed, see e.g.\ \cite{BL04} and \cite{KiiKurUsm_pla15}.  This
property is now a part of vertex
condition~\eqref{primaryVertexCond2H1} as shown in the next Lemma.  In
fact, condition~\eqref{primaryVertexCond2H1} contains more information
due to the fact that we now have an extra degree of freedom (angular
displacement $\eta(x)$) in comparison to \cite{BL04,KiiKurUsm_pla15}.
We also note that condition~\eqref{primaryVertexCond2H1} does not
require special consideration for certain values of angles between
beams, cf.\ \cite[Theorem 1]{KiiKurUsm_pla15}.
\begin{lem}
  \label{equivConditions}
  Condition~\eqref{primaryVertexCond2H1} holds if and only if for any $e \sim \vt$
  \begin{align}
    \label{rigidPlane_1}
    (\vec j_2 \cdot \vec i_e) v_1'
    +  (\vec j_e \cdot \vec i_1) v_2'
    +  (\vec j_1 \cdot \vec i_2) v_e'
    &= 0,\\
    \label{rigidPlane_2}
    (\vec j_2 \cdot \vec j_e) v_1'
    - (\vec j_e \cdot \vec j_1) v_2'
    + (\vec j_1 \cdot \vec i_2) \eta_e
    &= 0.
  \end{align}
  Equation (\ref{rigidPlane_1}) is equivalent to the requirement that
  all vectors $\big\{\bm{\vec
  i}_e(\vt)\big\}_{e \sim \vt}$ lie in the same plane.
\end{lem}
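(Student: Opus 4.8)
The plan is to recast the vector identity \eqref{primaryVertexCond2H1} in a coordinate-free way and then read off the two scalar equations by expanding one beam direction in a basis built from two reference beams. Throughout I work in the $H_1$ sector, where $w_e\equiv u_e\equiv 0$, so that by Lemma~\ref{lem:RotationVector} the combination appearing in \eqref{primaryVertexCond2H1} is exactly the linearized rotation vector
\[
\vec\omega_e := \eta_e \vec i_e - v_e' \vec j_e,
\]
an in-plane vector, since $\vec i_e,\vec j_e\perp\vec k=\vec E_3$. Condition~\eqref{primaryVertexCond2H1} thus states precisely that $\vec\omega_e$ is independent of $e\sim\vt$. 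The two elementary identities I would use repeatedly are the orthonormal-frame relations $\vec\omega_e\cdot\vec i_e=\eta_e$ and $\vec\omega_e\cdot\vec j_e=-v_e'$, together with $\vec j_e=\vec k\times\vec i_e$.

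First I would fix two incident beams, labelled $1$ and $2$, chosen so that $\vec i_1$ and $\vec i_2$ are not collinear; this is possible unless all beams at $\vt$ share a common axis, in which case the vertex is of the type treated in Lemma~\ref{coLinearEdges} and there is nothing to prove. Non-collinearity means that $\{\vec i_1,\vec i_2\}$, and equivalently $\{\vec j_1,\vec j_2\}$, is a basis of the frame plane, with determinant governed by $\vec j_1\cdot\vec i_2=-\,\vec j_2\cdot\vec i_1\neq0$. For the forward implication I assume $\vec\omega_e\equiv\vec\omega$ is common and expand $\vec i_e=\alpha\vec i_1+\beta\vec i_2$, hence also $\vec j_e=\alpha\vec j_1+\beta\vec j_2$; Cramer's rule identifies $\alpha,\beta$ with the dot products $\vec j_2\cdot\vec i_e$ and $\vec j_1\cdot\vec i_e$ divided by $\vec j_1\cdot\vec i_2$. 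Dotting $\vec\omega$ with $\vec j_e$ and using $\vec\omega\cdot\vec j_m=-v_m'$ then yields \eqref{rigidPlane_1} after clearing the denominator. Expanding instead $\vec i_e=\gamma\vec j_1+\delta\vec j_2$ in the other basis and dotting $\vec\omega$ with $\vec i_e$, now using $\vec\omega\cdot\vec i_e=\eta_e$, yields \eqref{rigidPlane_2}.

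For the converse I would run this backwards: define $\vec\omega$ to be the unique in-plane vector with $\vec\omega\cdot\vec j_1=-v_1'$ and $\vec\omega\cdot\vec j_2=-v_2'$ (well defined because $\{\vec j_1,\vec j_2\}$ is a basis). Re-expanding $\vec j_e$ and $\vec i_e$ in this basis and substituting \eqref{rigidPlane_1} and \eqref{rigidPlane_2} respectively gives $\vec\omega\cdot\vec j_e=-v_e'$ and $\vec\omega\cdot\vec i_e=\eta_e$ for every $e\sim\vt$; since $\{\vec i_e,\vec j_e\}$ is orthonormal this forces $\vec\omega_e=(\vec\omega\cdot\vec i_e)\vec i_e+(\vec\omega\cdot\vec j_e)\vec j_e=\vec\omega$, which is \eqref{primaryVertexCond2H1}. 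The only real obstacle I expect is bookkeeping: matching the Cramer's-rule coefficients to the exact dot-product combinations written in \eqref{rigidPlane_1}--\eqref{rigidPlane_2}, using the sign identities $\vec j_1\cdot\vec i_e=-\,\vec j_e\cdot\vec i_1$ and $\vec j_1\cdot\vec i_2=-\,\vec j_2\cdot\vec i_1$ that make the two bases interchangeable. The non-degeneracy hypothesis $\vec j_1\cdot\vec i_2\neq0$ is precisely what keeps every division legitimate.

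Finally, for the geometric characterization of \eqref{rigidPlane_1}, I would use that in the $H_1$ sector the deformed tangent is $\bm{\vec i}_e(\vt)=\vec i_e+v_e'\vec k+\cO(\|\Psi\|_\dmax^2)$ by \eqref{eq:i_from_g}. Computing the scalar triple product $\bm{\vec i}_1\cdot(\bm{\vec i}_2\times\bm{\vec i}_e)$ to first order, using $\vec i_m\times\vec i_{m'}=(\vec j_m\cdot\vec i_{m'})\vec k$ and $\vec k\times\vec i_e=\vec j_e$, reproduces exactly the left-hand side of \eqref{rigidPlane_1}. Since three vectors are coplanar precisely when their triple product vanishes, \eqref{rigidPlane_1} holding for every $e\sim\vt$ is equivalent to the whole family $\big\{\bm{\vec i}_e(\vt)\big\}_{e\sim\vt}$ lying in a common plane.
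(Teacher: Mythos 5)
Your proof is correct, and it shares the paper's two structural pillars: the reformulation of \eqref{primaryVertexCond2H1} as ``the in-plane linearized rotation vector $\vec\omega_e=\eta_e\vec i_e-v_e'\vec j_e$ is independent of $e$,'' and the identification of the left-hand side of \eqref{rigidPlane_1} with the scalar triple product $\left(\bm{\vec i}_1(\vt)\times\bm{\vec i}_2(\vt)\right)\cdot\bm{\vec i}_e(\vt)$ of the deformed tangents $\vec i_e+v_e'\vec k$; your coplanarity argument is literally the paper's. The difference lies in the linear algebra of the forward direction: you expand $\vec i_e$ and $\vec j_e$ in the bases $\{\vec i_1,\vec i_2\}$ and $\{\vec j_1,\vec j_2\}$ by Cramer's rule and then take inner products with the common $\vec\omega$, while the paper projects \eqref{primaryVertexCond2H1} onto $\vec j_e$, $\vec j_1$, $\vec j_2$ and obtains \eqref{rigidPlane_1}--\eqref{rigidPlane_2} as vanishing determinants of rank-deficient $3\times3$ systems. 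The converses essentially coincide: your $\vec\omega$, defined by $\vec\omega\cdot\vec j_1=-v_1'$ and $\vec\omega\cdot\vec j_2=-v_2'$, is exactly the paper's explicit vector $(v_2'\vec i_1-v_1'\vec i_2)/(\vec j_1\cdot\vec i_2)$ appearing in \eqref{vecM}. Your framing buys transparency --- the two scalar equations are visibly just the coordinates of an edge-independent vector --- at the cost of requiring $\vec j_1\cdot\vec i_2\neq0$ already in the forward direction, which the paper's elimination argument does not need. One caveat: your disposal of the degenerate case is inaccurate, since Lemma~\ref{coLinearEdges} concerns degree-two joints only; at a joint of degree at least three whose axes are all collinear, \eqref{rigidPlane_1} becomes vacuous while \eqref{primaryVertexCond2H1} does not, so the ``only if'' implication survives but the ``if'' implication genuinely needs beams $1$ and $2$ to be non-collinear. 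The paper makes the same assumption silently in its converse, so this is a shared blemish rather than a gap specific to your argument.
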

The proof of the lemma is technical and is deferred to Appendix~\ref{sec:tangent_plane}.
\begin{remark}
  \label{degTwoTrivial}
  In the special case of degree $n=2$, condition (\ref{rigidPlane_1})
  becomes vacuous, reducing to $v_1' = v_1'$ and $v_2' = v_2'$.
\end{remark}

\subsection{Self-adjoint extension including vertex energy}
\label{sec:VertexEnergy}

Here we describe a range of vertex conditions achieved by introducing
vertex energy in~\eqref{varEnergyForm}.  Consider the energy form
\begin{equation}
  \label{modifiedEnergy}
  \cU^{(\Gamma)} = \sum_{e \in E} \cU^{(e)} +
  \sum_{\vt \in V} \cU^{(\vt)}
\end{equation}
with vertex energy defined as
\begin{align}
  \label{vertexEnergy}
  \cU^{(\vt)} := \frac{1}{2} \big(|\vec g_\vt|^2 \tan (\alpha_\vt)
  + |\vec \omega_\vt|^2 \tan (\beta_\vt) \big)
\end{align}
In (\ref{vertexEnergy}) the parameters
$\alpha_\vt, \beta_\vt \in (-\pi/2, \pi/2)$ and
$\vec g_\vt := \vec g_e(\vt)$ and
$\vec \omega_\vt := \vec \omega_e(\vt)$ for any $e$ adjacent to $\vt$.
We then arrive to the following modification of Theorem
\ref{MainTheorem}.

\begin{thm}
  \label{selfAdExtThm}
  Free network of beams with energy functional (\ref{modifiedEnergy})
  along with rigid vertex condition is described by the Hamiltonian $H$
  acting as (\ref{diffSystem}) that satisfies at each vertex $\vt \in V$
  of degree $n_\vt$, 
  \begin{enumerate}
  \item[(i)] continuity of displacement: for all $e,e'\sim \vt$,
    \begin{align}
      \label{primaryBcThmExtDisp}
      u_e \vec i_e + w_e \vec j_e + v_e \vec k_e
      = u_{e'} \vec i_{e'} + w_{e'} \vec j_{e'} + v_{e'} \vec k_{e'} =: \vec g_\vt
    \end{align}
  \item[(ii)] continuity of rotation: for all $e,e'\sim \vt$, 
    \begin{align}
      \label{primaryBcThmExtRota}
      \eta_e \vec i_e - v_e' \vec j_e + w_e' \vec k_e
      = \eta_{e'} \vec i_{e'} - v_{e'}' \vec j_{e'} + w_{e'}' \vec k_{e'} =: \vec \omega_\vt
    \end{align}
  \item[(iii)] balance of forces 
    \begin{align}
      \label{secondBcThmExtDisp}
      \vec g_\vt \sin(\alpha_\vt)
      + \cos(\alpha_\vt) \sum_{e\sim\vt } s_\vt^e\big(c_e u_e' \vec i_e
      - b_e w_e''' \vec j_e - a_e v_e''' \vec k_e \big) = 0,
    \end{align}
  \item[(iv)] balance of moments  
    \begin{align}
      \label{secondBcThmExtRota}
      \vec \omega_\vt \sin (\beta_\vt)
      + \cos(\beta_\vt) \sum_{e\sim\vt} s_\vt^e \big(d_e \eta_e' \vec i_e
      -a_e v_e'' \vec j_e + b_e w_e'' \vec k_e \big) = 0.
    \end{align}
  \end{enumerate}
\end{thm}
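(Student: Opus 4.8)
The plan is to treat the vertex energy as a symmetric perturbation of the form~\eqref{varEnergyForm} and to mirror, step by step, the proof of Theorem~\ref{MainTheorem}. First I would polarize the vertex energy~\eqref{vertexEnergy} into a symmetric sesquilinear form defined on the form domain $\Dom(h)$ of Theorem~\ref{primaryCond},
\begin{equation*}
  q[\widetilde\Psi,\Psi] := \sum_{\vt\in V}\Big(\tan(\alpha_\vt)\,\cc{\vec{\widetilde g}_\vt}\cdot\vec g_\vt + \tan(\beta_\vt)\,\cc{\vec{\widetilde\omega}_\vt}\cdot\vec\omega_\vt\Big),
\end{equation*}
where $\vec g_\vt$ and $\vec\omega_\vt$ are exactly the matched vertex traces introduced in~\eqref{primaryBcThmExtDisp}--\eqref{primaryBcThmExtRota}. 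Since the vertex energy depends only on these already-continuous traces, $q$ is naturally defined on all of $\Dom(h)$, and I would set $h_{\mathrm{ext}} := h + q$ with $\Dom(h_{\mathrm{ext}}) = \Dom(h)$. Once $h_{\mathrm{ext}}$ is shown to be closed and bounded below, the Representation Theorem (as used for Theorem~\ref{MainTheorem}, see \cite[section VI.2]{Kato_perturbation}) attaches to it a unique self-adjoint operator $H$, and the remaining work is to identify the action and domain of $H$.

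The analytic core, and the step I expect to demand the most care, is establishing that $h_{\mathrm{ext}}$ is closed and semibounded. The idea is that $q$ is infinitesimally form-bounded relative to $h$: each summand is a product of vertex traces of $v,w,u,\eta$ (entering $\vec g_\vt$) and of $\eta,v',w'$ (entering $\vec\omega_\vt$), and the trace inequalities~\eqref{eq:Sobolev1_trace}--\eqref{eq:Sobolev2_trace} together with the interpolation bound~\eqref{eq:itermid_deriv} control all of these by $\varepsilon\,h[\Psi,\Psi] + C_\varepsilon\|\Psi\|_{\Ltwo}^2$ with $\varepsilon>0$ arbitrarily small; the coefficients $\tan\alpha_\vt,\tan\beta_\vt$ are finite because $\alpha_\vt,\beta_\vt\in(-\pi/2,\pi/2)$, and the sum over $V$ is finite. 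Choosing $\varepsilon<1$ gives relative bound strictly less than one, so the KLMN theorem (again \cite{Kato_perturbation}) yields that $h_{\mathrm{ext}}$ is closed and bounded below on $\Dom(h)$. The delicate point is handling the first-derivative traces $v'(\vt),w'(\vt)$ appearing in $\vec\omega_\vt$ via~\eqref{eq:Sobolev2_trace} while keeping the coefficient of the highest-order contribution arbitrarily small; the rest of the estimate is routine.

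It then remains to read off the operator. For $\Psi\in\Dmax$ satisfying~\eqref{primaryBcThmExtDisp}--\eqref{secondBcThmExtRota} I would integrate by parts exactly as in~\eqref{beamDiff}--\eqref{eq:B_final}: the bulk produces the differential expression~\eqref{diffSystem} together with the boundary form $\cB_\vt = \cc{\vec{\widetilde g}_\vt}\cdot\vec F_\vt + \cc{\vec{\widetilde\omega}_\vt}\cdot\vec M_\vt$, where $\vec F_\vt$ and $\vec M_\vt$ are the force and moment sums appearing in~\eqref{secondBcThmExtDisp}--\eqref{secondBcThmExtRota}. Adding $q$ and using that $\cc{\vec{\widetilde g}_\vt}$ and $\cc{\vec{\widetilde\omega}_\vt}$ range independently over $\bR^3$ as $\widetilde\Psi$ varies, the identity $h_{\mathrm{ext}}[\widetilde\Psi,\Psi]=\langle\widetilde\Psi,H\Psi\rangle$ forces, at each vertex, $\vec F_\vt + \tan(\alpha_\vt)\,\vec g_\vt = \vec0$ and $\vec M_\vt + \tan(\beta_\vt)\,\vec\omega_\vt = \vec0$. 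Multiplying these through by $\cos\alpha_\vt>0$ and $\cos\beta_\vt>0$ (legitimate since the angles lie in $(-\pi/2,\pi/2)$) converts them into the stated balance conditions~\eqref{secondBcThmExtDisp}--\eqref{secondBcThmExtRota}; this rescaling is merely cosmetic but yields the symmetric form that degenerates gracefully to the pinned and clamped limits as $\alpha_\vt,\beta_\vt\to\pm\pi/2$. Finally, to confirm the domain is neither too small nor too large, I would reuse the three-step $H_{\mathrm{min}}\subset H\subset H_{\mathrm{min}}^*$ argument of Theorem~\ref{MainTheorem}: the minimal operator bootstraps $\Dom(H)\subset\Dmax$, Green's identity with $\widetilde\Psi$ arbitrary in $\Dom(h)$ then recovers~\eqref{secondBcThmExtDisp}--\eqref{secondBcThmExtRota}, and self-adjointness is already guaranteed by the Representation Theorem.
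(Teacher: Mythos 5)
The paper gives no proof of Theorem~\ref{selfAdExtThm} at all, stating only that it is analogous to the proof of Theorem~\ref{MainTheorem}; your proposal is a correct realization of exactly that analogy (perturb the form of Theorem~\ref{primaryCond} by the vertex terms, close it via the trace estimates and KLMN, invoke the Representation Theorem, integrate by parts to read off the balance conditions, and identify the domain through the $H_{\mathrm{min}} \subset H \subset H_{\mathrm{min}}^*$ bootstrap), so it in fact supplies the details the paper leaves out. One cosmetic point: the vertex energy~\eqref{vertexEnergy} carries a factor $\tfrac12$, so its literal polarization is $\tfrac12 q$ rather than your $q$, which would produce $\tfrac12\tan(\alpha_\vt)$ and $\tfrac12\tan(\beta_\vt)$ in the vertex conditions; dropping the $\tfrac12$ is what reproduces \eqref{secondBcThmExtDisp}--\eqref{secondBcThmExtRota} verbatim, and the mismatch amounts to an internal reparameterization of $\alpha_\vt,\beta_\vt$ in the paper itself rather than a gap in your argument.
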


We omit the proof of this Theorem as it is analogous to the proof of
Theorem \ref{MainTheorem}. 

Setting $\alpha_\vt = \beta_\vt = 0$ in the above Theorem we recover
the free rigid joint conditions of Theorem \ref{MainTheorem}.  While
the value $\pi/2$ is excluded from the allowed range of $\alpha_\vt$
and $\beta_\vt$ in the energy form \eqref{vertexEnergy}, the
corresponding Hamiltonian remains self-adjoint, with conditions
becoming
\begin{equation}
  \label{eq:fixed_displacement}
  u_e \vec i_e + w_e \vec j_e + v_e \vec k_e = 0
\end{equation}
or
\begin{equation}
  \label{eq:fixed_rotation}
  \eta_e \vec i_e - v_e' \vec j_e + w_e' \vec k_e = 0
\end{equation}
with $\alpha_\vt=\pi/2$ or $\beta_\vt=\pi/2$ correspondingly.  As
usual with Dirichlet-type conditions, they have to be introduced into
the domain of the form directly, hence the divergence of
\eqref{vertexEnergy}.

The choices $\{\alpha_\vt, \beta_\vt\} \in \{0, \pi/2\}$ cover several
physically relevant conditions.  For a vertex of degree $n=1$
we summarize these conditions in Table~\ref{bCondTable}, listing the
names commonly used in applied literature.  Note that many more
variations of the conditions can be formulated (and are of great
practical relevance), where the displacement or rotation are not
completely free or completely zero, but are restricted to certain
subspaces.

\begin{table}
  \renewcommand{\arraystretch}{1.5}
  \begin{tabular}{|c||c|c|c|c|}
    \hline
    \multirow{2}{1.2cm}{\centering Type}
    & \multirow{2}{0.75cm}{\centering $\alpha_\vt$}
    & \multirow{2}{0.75cm}{\centering $\beta_\vt$}
    & \multicolumn{2}{c|}{\centering Boundary Types} \\
    \cline{4-5}
    &&& Primary & Secondary \\ 
    \hline
    Free & $0$& $0$ & \multirow{2}{1.2cm}{---}
    & $u'_\vt= w'''_\vt = v'''_\vt = 0$ \\
    &&&& $\eta'_\vt = v''_\vt = w''_\vt = 0$\\
    \hline
    Fixed (Clamped) & $\pi/2$ & $\pi/2$ &$u_\vt = w_\vt = v_\vt = 0$
    & \multirow{2}{1.2cm}{---} \\
    &&& $\eta_\vt = v'_\vt = w'_\vt = 0$ &\\
    \hline
    Pinned & $\pi/2$ & $0$ & $u_\vt = w_\vt = v_\vt = 0$
    & $\eta'_\vt = v''_\vt = w''_\vt = 0$\\
    \hline
    Guided & $0$ & $\pi/2$ & $\eta_\vt = v'_\vt = w'_\vt = 0$
    & $u'_\vt= w'''_\vt = v'''_\vt = 0$\\
    \hline
  \end{tabular}
  \caption{Some boundary conditions for a vertex of degree one.}
  \label{bCondTable}
\end{table} 

\begin{remark}
  \label{rem:all_conditions}
  It is also possible to classify \emph{all} conditions leading to a
  self-adjoint operator \eqref{diffSystem}, using, for example,
  symplectic linear algebra \cite[Theorem 3.1]{BerLatSuk_am19} or
  other methods \cite[Theorem 1.4.1]{BerKuc_graphs}.
\end{remark}

\section{Spectral analysis by example}

\subsection{General strategy}
Calculating eigenvalues of the operator $H$ described in
Theorem~\ref{MainTheorem} is the main step towards understanding of
vibration of beam frames.  The most direct approach is to solve the
eigenvalue equation $H\Psi = \lambda \Psi$ component-wise on every
edge before applying conditions at vertices.  For example, the equation on
the out-of-plane displacement $v_e$ on an edge $e$ reads
\begin{equation}
  \label{eq:v_equation}
  a_e \frac{d^4v_e(x)}{dx^4} = \lambda v_e(x),
\end{equation}
with the general solution (assuming $\lambda\neq0$),
\begin{equation}
  \label{eq:v_solution_generale}
  v_e(x) = C_{e,1}^{v} \cosh(\mu_e x) + C_{e,2}^{v} \sinh(\mu_e x)
  + C_{e,3}^{v} \cos(\mu_e x) + C_{e,4}^{v} \sin(\mu_e x),
  \quad \mu_e := (\lambda/a_e)^{1/4}.
\end{equation}
The eigenvalue equation for the in-axis angular displacement is
\begin{equation}
  \label{eq:eta_equation}
  -d_e\frac{d^2\eta_e(x)}{dx^2} = \lambda \eta_e(x),
\end{equation}
with the general solution
\begin{equation}
  \label{eq:eta_solution_generale}
  \eta_e(x) = C_{e,1}^{\eta} \cos(\beta_e x) + C_{e,2}^{\eta} \sin(\beta_e x),
  \quad \beta_e := (\lambda/d_e)^{1/2},
\end{equation}
and so on.  Vertex conditions
\eqref{primaryBcThmDisp}--\eqref{secondBcThmRota} imposed at the ends
of the edges couple together different solutions and lead to a
homogeneous linear system on the constants
$\left\{C^{*}_{e,j}\right\}$.  The coefficients of this linear system
are trigonometric functions of $\lambda$ and edge lengths.
Eigenvalues $\lambda$ are characterized by the existence of a
non-trivial choice of the constants $\left\{C^{*}_{e,j}\right\}$,
which can be detected by equating the system's determinant to zero.
This condition is known as the dynamic stiffness method in engineering
literature \cite{Banerjee19} and is analogous to ``characteristic'' or ``secular'' equation used in
the study of quantum graphs \cite{Bel_laa85,KotSmi_ap99}.  

However, problems can quickly become computationally overwhelming
since there are 12 constants $C^{*}_{e,j}$ associated with every beam.
In section~\ref{sec:example_antenna} we simplify an example using
symmetry, which is frequently present in real-life applications.  This
technique, based on representation theory of finite groups, allows one
to decompose the original problem into a sum of operators each
corresponding to a particular class of vibrational modes.

We note that a particular example of such decomposition is the
decoupling of Corollary~\ref{decouplingPlane} for a planar frame.
Indeed, all planar graphs enjoy the symmetry of reflection across the $X-Y$
(or $E_1-E_2$) plane.  The operator $H_1$ of
Corollary~\ref{decouplingPlane} encapsulates the part of the operator
$H$ which is anti-symmetric with respect to this reflection.

\subsection{Example: a simple planar graph}
Consider the \emph{planar} beam frame depicted in Figure
\ref{fig:inPlane} consisting of three beams $e_1, e_2, e_3$ meeting at
the free rigid joint $\vt_c$.  The endpoints $\vt_1$, $\vt_2$ and
$\vt_3$ are assumed to be fixed, see Table~\ref{bCondTable}.  The
beams are oriented from the fixed ends to the joint; the local basis
of each beam is shown in the Figure.

\begin{figure}
  \centering
  \includegraphics[width=0.4\textwidth]{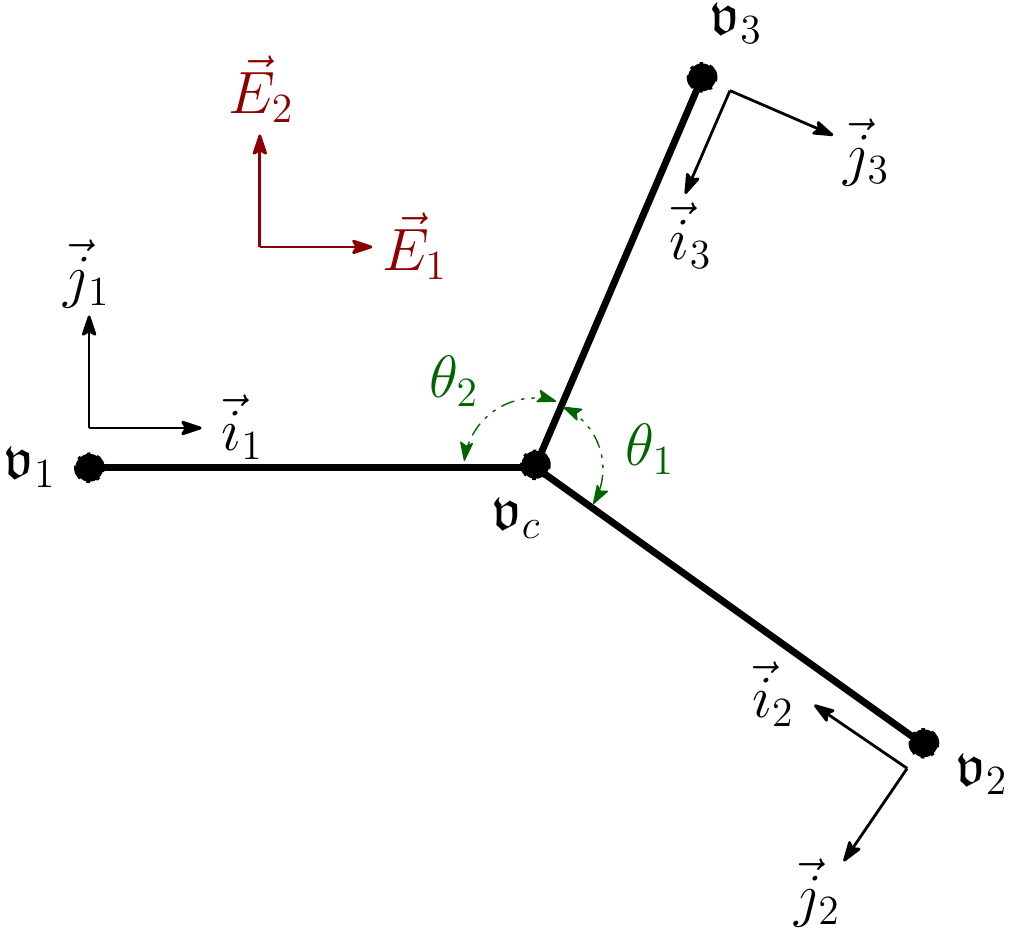}
  \caption{Geometry of planar star graph in its equilibrium
    sate.} 
  \label{fig:inPlane}
\end{figure}

We focus our attention on the out-of-plane displacement of the beam
frame, which, according to Corollary~\ref{decouplingPlane}, is coupled
to the angular displacement.  The eigenvalue problem for the operator
$H_1$ satisfies equations \eqref{eq:v_equation} and
\eqref{eq:eta_equation} on each edge.  Imposing the conditions at the
boundary vertices $\vt_1, \vt_2, \vt_3$, the general solutions reduce
to
\begin{equation}
  \label{eigenProblemPlanar}
  v_i(x) = A_i \big(\sinh(\mu_i x) - \sin(\mu_i x)\big)
  + B_i \big(\cosh(\mu_i x) - \cos(\mu_i x)\big), \qquad
  \eta_i(x) = C_i \sin(\beta_i x),
\end{equation}
with $\mu_i := (\lambda/a_i)^{1/4}$ and
$\beta_i = (\lambda/d_i)^{1/2}$.  Applying vertex conditions
(\ref{vertexCondPlanarPrimaryH1}) and
(\ref{vertexCondPlanarSecondaryH1}) to (\ref{eigenProblemPlanar}), the
eigenvalue problem reduces to finding the coefficient vector
$(A_1~B_1~A_2~B_2~A_3~B_3~C_1~C_2~C_3)^T$ in the kernel of the
$9\times9$ matrix $M = M(\lambda)$ given by
\begin{equation*}
\Scale[0.8]{
	M =
\begin{pmatrix}
S_{\mu_1 \ell_1}^{-} & C_{\mu_1 \ell_1}^{-} & 0 & -S_{\mu_2 \ell_2}^{-} & -C_{\mu_2 \ell_2}^{-} & 0 & 0 & 0 & 0 \\
S_{\mu_1 \ell_1}^{-} & C_{\mu_1 \ell_1}^{-} & 0 & 0 & 0 & 0 & -S_{\mu_3 \ell_3}^{-} & -C_{\mu_3 \ell_3}^{-} & 0 \\
\mu_1C_{\mu_1 \ell_1}^{+} & \mu_1S_{\mu_1 \ell_1}^{-} & 0 & \mu_2C_{\mu_2 \ell_2}^{+} & \mu_2S_{\mu_2 \ell_2}^{-} & 0 & \mu_3C_{\mu_3 \ell_3}^{+} & \mu_3S_{\mu_3 \ell_3}^{-} & 0 \\
0 & 0 & S_{\beta_1 \ell_1} & S_{\alpha_1}\mu_2 C_{\mu_2 \ell_2}^{-} &  S_{\alpha_1}\mu_2 S_{\mu_2 \ell_2}^{+} & -C_{\alpha_1}S_{\beta_2 \ell_2} & 0 & 0 & 0 \\
0 & 0 & S_{\beta_1\ell_1} & 0 & 0 & 0 & -S_{\alpha_2}\mu_3 C_{\mu_3 \ell_3}^{-} & -S_{\alpha_2}\mu_3 S_{\mu_3 \ell_3}^{+} & -C_{\alpha_2}S_{\beta_3\ell_3} \\
-\mu_1C_{\mu_1 \ell_1}^{-} & -\mu_1S_{\mu_1 \ell_1}^{+} & 0 & C_{\alpha_1}\mu_2C_{\mu_2 \ell_2}^{-} & C_{\alpha_1}\mu_2S_{\mu_2 \ell_2}^{+} & S_{\alpha_1}S_{\beta_2 \ell_2} & 0 & 0 & 0 \\
-\mu_1C_{\mu_1 \ell_1}^{-} & -\mu_1S_{\mu_1 \ell_1}^{+} & 0 & 0 & 0 & 0 & C_{\alpha_2}\mu_3C_{\mu_3 \ell_3}^{-} & C_{\alpha_2}\mu_3S_{\mu_3 \ell_3}^{+} & -S_{\alpha_2}S_{\beta_3 \ell_3} \\
\mu_1^2S_{\mu_1 \ell_1}^{+} & \mu_1^2C_{\mu_1 \ell_1}^{+} & 0 & C_{\alpha_1}\mu_2^2S_{\mu_2 \ell_2}^{+} & C_{\alpha_1}\mu_2^2C_{\mu_2 \ell_2}^{+} & S_{\alpha_1}\beta_2C_{\beta_2\ell_2} & C_{\alpha_2}\mu_3^2S_{\mu_3 \ell_3}^{+} & C_{\alpha_2}\mu_3^2C_{\mu_3 \ell_3}^{+} & -S_{\alpha_2}\beta_3S_{\beta_3\ell_3} \\
0 & 0 & \beta_1C_{\beta_1 \ell_1} & S_{\alpha_1}\mu_2^2S_{\mu_2 \ell_2}^{+} & -S_{\alpha_1}\mu_2^2C_{\mu_2 \ell_2}^{+} & C_{\alpha_1}\beta_2C_{\beta_2\ell_2} & S_{\alpha_2}\mu_3^2S_{\mu_3 \ell_3}^{+} & S_{\alpha_2}\mu_3^2C_{\mu_3 \ell_3}^{+} & C_{\alpha_2}\beta_3C_{\beta_3 \ell_3}
\end{pmatrix}
}  
\end{equation*}
with the corresponding eigenvalue $\lambda$ being a solution of
$\det(M_\lambda) = 0$.  We used the following abbreviations to make the matrix more compact
\begin{equation}
\label{eq:notationMatrix}
S_{\gamma} := \sin(\gamma), \hspace{5mm}  
C_{\gamma} := \cos(\gamma), \hspace{5mm}
S_{\gamma}^{\pm} := \sinh(\gamma) \pm \sin(\gamma), \hspace{5mm} 	C_{\gamma}^{\pm} := \cosh(\gamma) \pm \cos(\gamma),
\end{equation}

\begin{figure}
  \center
  \subfigure{
    \includegraphics[width=0.45\textwidth]{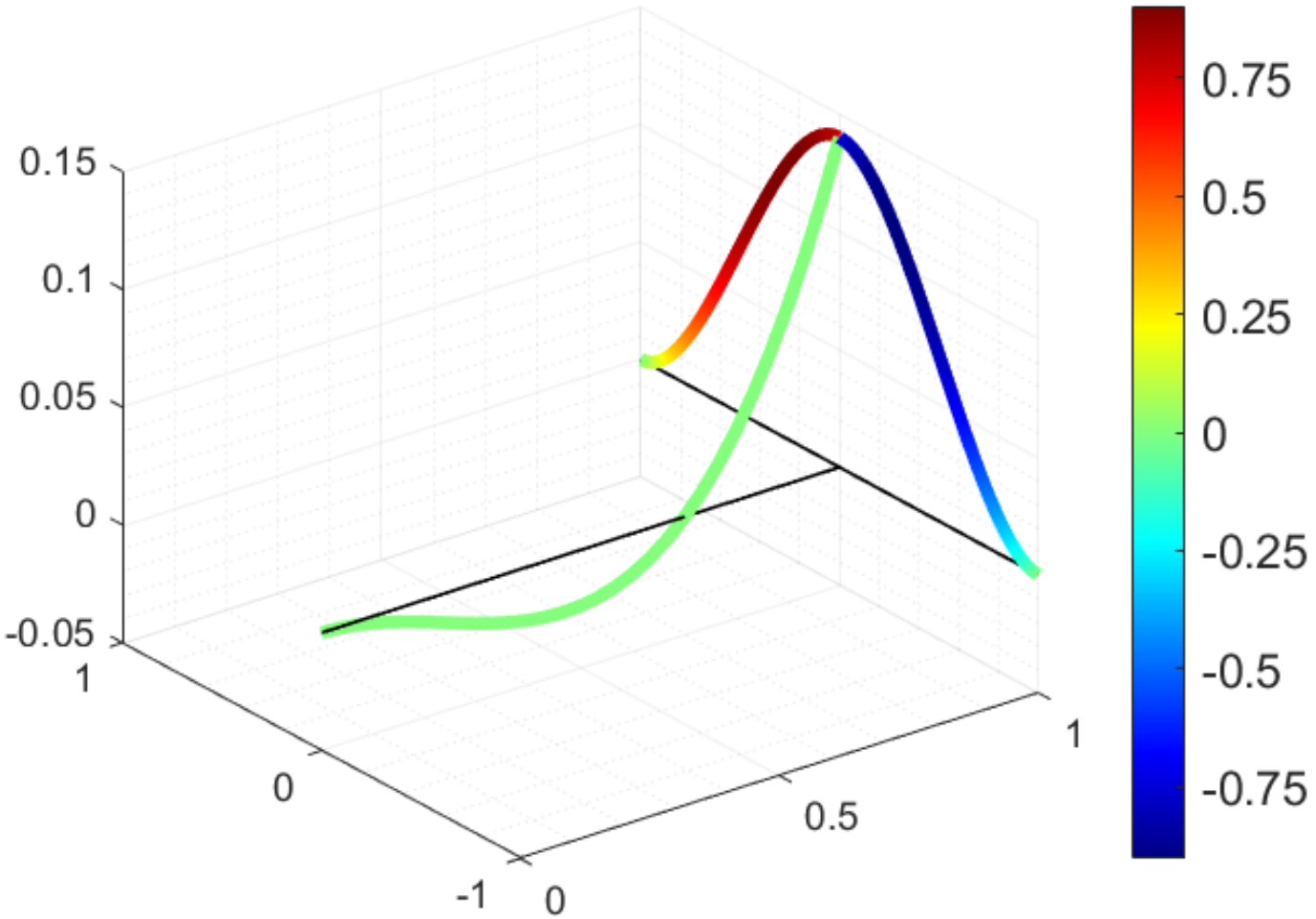}
  }
  \hspace{1mm}
  \subfigure{
    \includegraphics[width=0.45\textwidth]{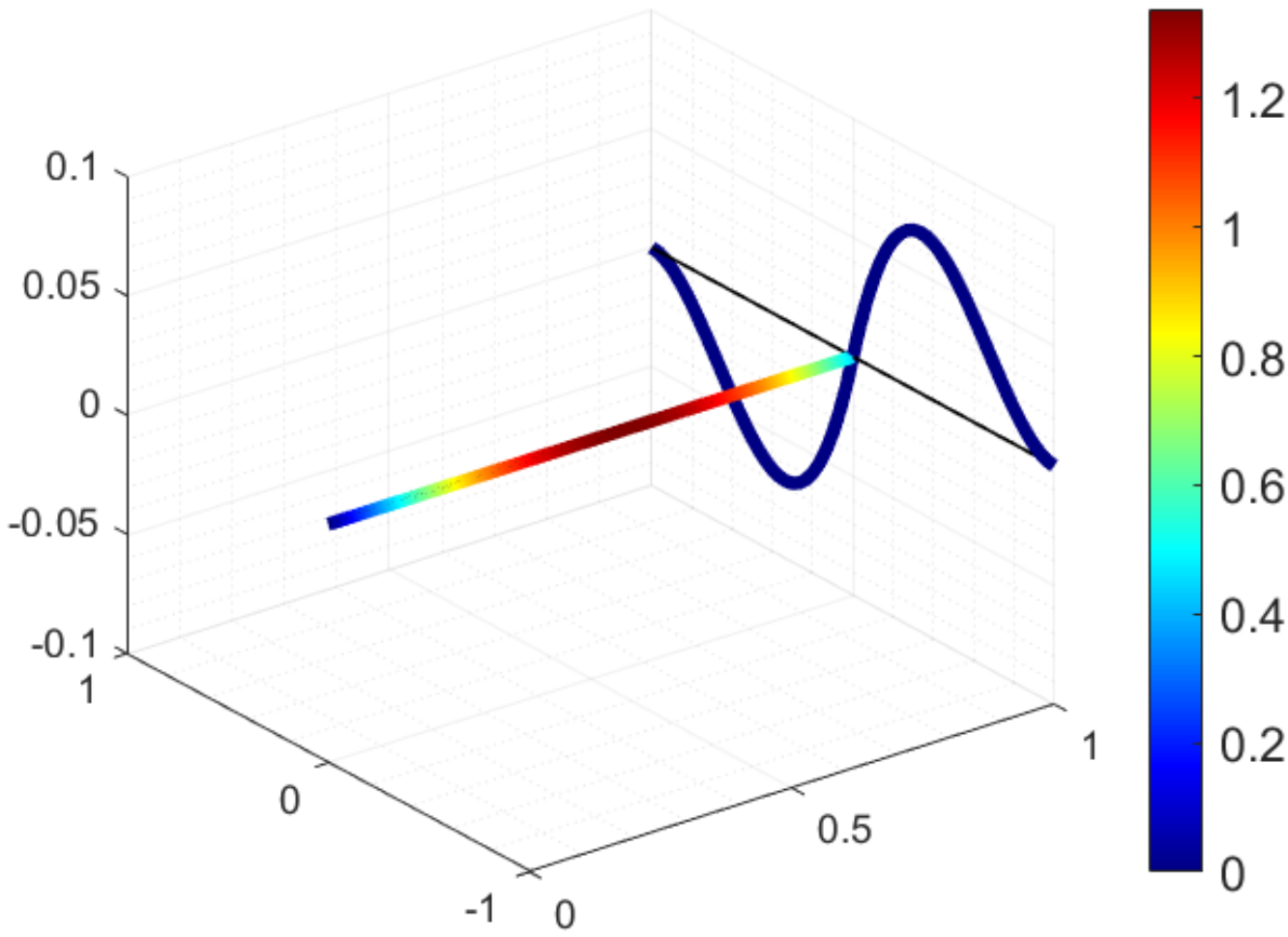}
  }
  \caption{Eigenfuctions corresponding to first and
    second eigenvalues for parameters $a_1=a_2=a_3=1$ and
    $d_1=d_2=d_3=1$. Color bar shows
    value of in-axis angular displacement of edges.}
  \label{dispEigenfunctionPlanar1}
\end{figure}

Results of numerical calculation are shown in
Figures~\ref{dispEigenfunctionPlanar1} and
\ref{dispEigenfunctionPlanar2} for $\theta_1 = \pi$ and $\theta_2= \pi/2$ (see Figure \ref{fig:inPlane}). Figure \ref{dispEigenfunctionPlanar1}
shows displacement of the first two eigenfunctions where all beams'
properties are selected identically. Unit value is selected for all
the parameters and the length of beams, i.e.\ for all beams we set $a_i=1$ and $d_i = 1$ (these parameters appear in definition of $\mu_i$'s and $\beta_i$'s above). The color represents
the in-axis angular displacement of the beams.

\begin{figure}
	\center
	\subfigure{
		\includegraphics[width=0.45\textwidth]{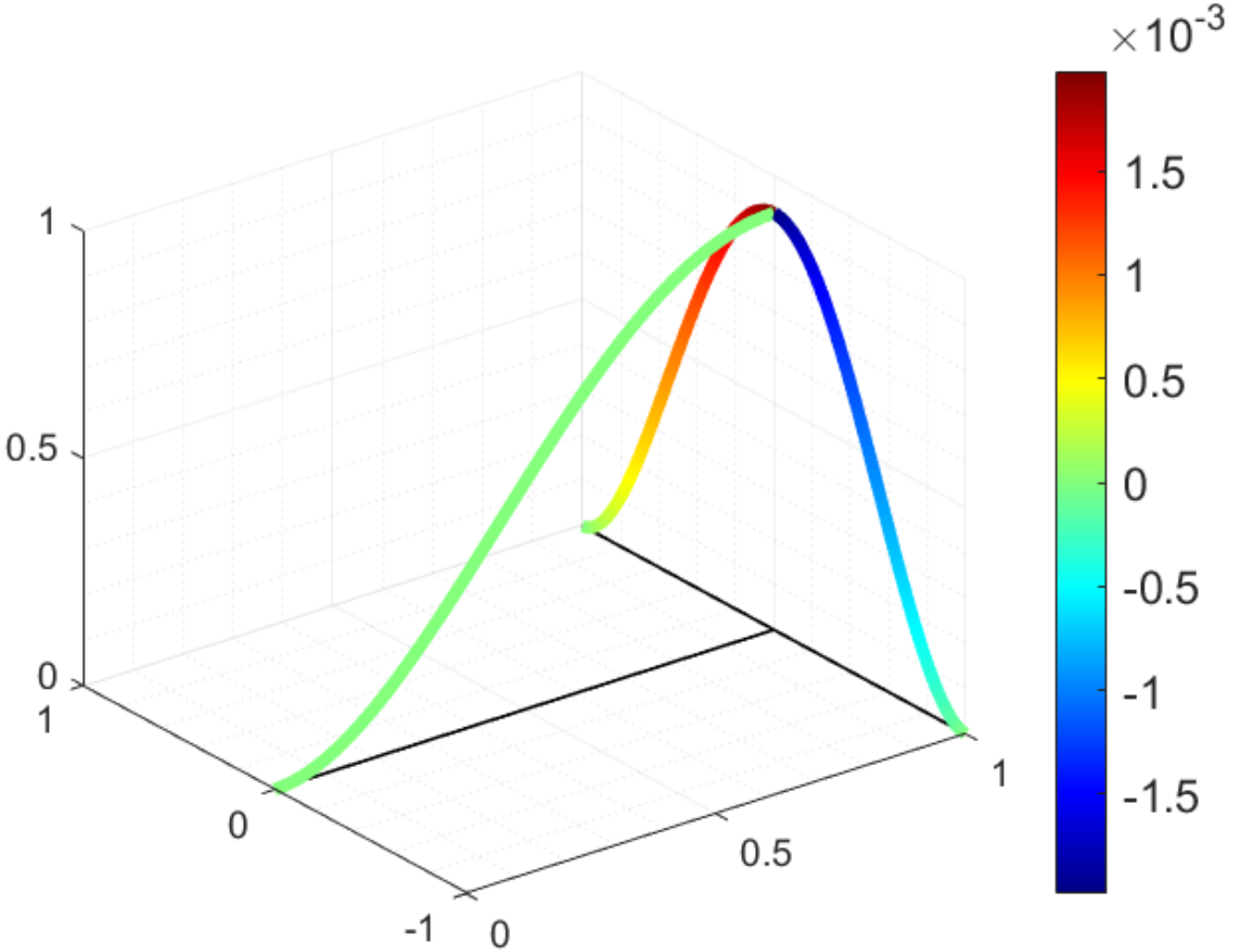}
	}
	\hspace{1mm}
	\subfigure{
		\includegraphics[width=0.45\textwidth]{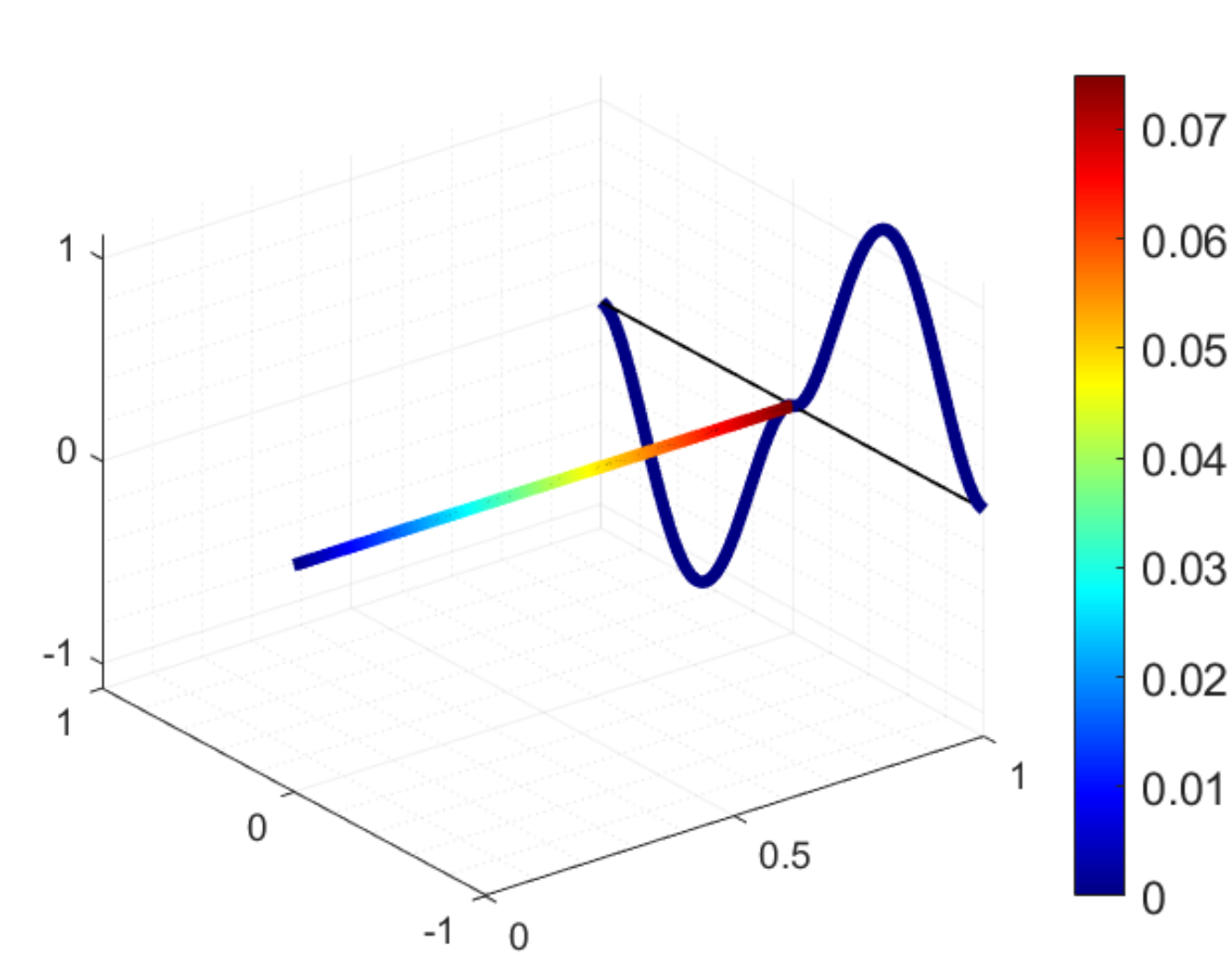}
	}
	\caption{Eigenfuctions corresponding to first and second
          eigenvalues for parameters $a_1=a_2=a_3=1$ and
          $d_1=d_2=d_3=10^3$. Color bar shows value of
          in-axis angular displacement of edges.}
	\label{dispEigenfunctionPlanar2}
\end{figure}

Increasing the edge's resistance to angular displacement strongly affects the form
of the eigenfunction. In the limiting
case where $d_i \rightarrow \infty$, the tangent plane at vertex $c$ does not change its angle (remains horizontal).  Figure \ref{dispEigenfunctionPlanar2} shows the
plots of displacement eigenfunction along with the value of  angular displacement for $a_i=1$ and $d_i = 10^3$ for $i=1,2,3$.

\subsection{Example of a three dimensional graph: antenna tower}
\label{sec:example_antenna}

Next we apply conditions of Theorem \ref{MainTheorem} to study
eigenmodes of a three dimensional star graph.  Our particular choice
is a graph with a high degree of symmetry, which is frequent in
applications.  This will allow us to demonstrate the use of group
representation theory to classify eigenmodes and simplify their
analysis.  Let $\Gamma_{AT}$ be the graph formed by three leg beams
$e_1, e_2, e_3$ and a vertical ``antenna'' beam $e_0$, all joining at
the internal vertex $\vt_c$, see Figure~\ref{threeDimGraphExample}.
The structure and its material parameters are assumed to be symmetric
with respect to rotation by $2\pi/3$ around the vertical axis and with
respect to reflection swapping a pair of the leg beams.  This implies
that $a_0=b_0$ and that the leg beams are identical with principal
axes of inertia that may be chosen to have $\vec{j}_n \perp \vec{E}_3$
for $n=1,2,3$, see Figure~\ref{threeDimGraphExample} and
equation~\eqref{eq:4star_bases} below.  The leg ends
$\vt_1,\vt_2,\vt_3$ are clamped (vanishing displacement and angular
displacement), vertex $\vt_c$ is a free rigid joint and the end
$\vt_0$ of the vertical beam is free.

Our main result of the section is a decomposition of the operator $H$
into a direct sum of four self-adjoint operators.  We will later
analyse each part separately.

\begin{thm}
  \label{thm:reducing_antenna_tower}
  The Hamiltonian operator $H$ of the beam frame $\Gamma_{AT}$ is
  reduced by the decomposition
  \begin{equation}
    \label{eq:reducing_AT}
    L^2(\Gamma) = \cH_{\mathrm{trv}} \oplus \cH_{\mathrm{alt}}
    \oplus \cH_\omega \oplus \cH_{\cc{\omega}},
  \end{equation}
  where
  \begin{align}
    \label{eq:Hid}
    \cH_{\mathrm{trv}}
    &:= \left\{\Psi \in L^2(\Gamma) \colon 
      v_0=w_0=\eta_0=0,\ w_s=\eta_s=0,\
      v_1=v_2=v_3,\ u_1=u_2=u_3 \right\}, \\
    \label{eq:Halt}
    \cH_{\mathrm{alt}}
    &:= \left\{\Psi \in L^2(\Gamma) \colon v_0=w_0=u_0=0,\ u_s=v_s=0,\ 
      w_1=w_2=w_3,\ \eta_1=\eta_2=\eta_3 \right\}, \\
    \label{eq:Homega}
    \cH_\omega
    &:= \left\{\Psi \in L^2(\Gamma) \colon u_0=\eta_0=0,\ w_0=\mathrm{i} v_0,\
      \Psi_3 = \omega \Psi_2
      = \omega^2 \Psi_1 \right\} = \cc{\cH_{\cc{\omega}}},
  \end{align}
  where $s\in\{1,2,3\}$ labels the legs, $\Psi_s:=(v_s,w_s,u_s,\eta_s)^T$,
  and $\omega = e^{2\pi\mathrm{i}/3}$.
\end{thm}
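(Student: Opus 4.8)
The plan is to exploit the geometric symmetry group of $\Gamma_{AT}$ --- the dihedral group $D_3\cong S_3$ generated by the rotation $r$ by $2\pi/3$ about the vertical axis and a reflection $\sigma$ swapping a pair of legs --- which acts on $L^2(\Gamma)$ by a unitary representation $U$ commuting with $H$. Once this is in place, the four summands in \eqref{eq:reducing_AT} are simply the reducing subspaces cut out by the spectral projections of $U(r)$ (eigenvalues $1,\omega,\cc\omega$) and of $U(\sigma)$, and the only remaining work is to match those subspaces to the explicit coordinate conditions in \eqref{eq:Hid}--\eqref{eq:Homega}.

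First I would construct $U(r)$ and $U(\sigma)$ explicitly. A symmetry $g$ with linear part $R_g\in O(3)$ sends a deformation with displacement field $\vec g$ and rotation field $\cR$ to the one with fields $R_g\vec g(g^{-1}\cdot)$ and $R_g\cR(g^{-1}\cdot)R_g^{-1}$; re-expanding these in the local bases via \eqref{eq:field_comp_recall} and \eqref{eq:torsion_def} yields the action on the components. Using the symmetry hypotheses (legs identical and related by $R_r$, $\vec j_s\perp\vec E_3$, and $a_0=b_0$) one finds that on the legs $U(r)$ is the cyclic permutation $\Psi_s\mapsto\Psi_{s-1}$ with \emph{trivial} fiber action, because the displacement and the local basis rotate together and $R_r$ preserves the dot products defining $u,v,w,\eta$; on the antenna $U(r)$ fixes $u_0$ and $\eta_0$ (the latter since the linearized torsion is $\vec\omega\cdot\vec i_0$ and $R_r$ fixes the axis $\vec i_0=\vec E_3$) while rotating the horizontal pair $(v_0,w_0)$ by $2\pi/3$. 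The one delicate point is the reflection: $\vec g$ is a genuine vector but the linearized rotation vector $\vec\omega$ is a \emph{pseudovector}, transforming by $\det(R_\sigma)R_\sigma\vec\omega=-R_\sigma\vec\omega$, so the torsion $\eta$ picks up a sign under $\sigma$ while the axial displacement $u$ does not.

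Next I would verify $U(g)H=HU(g)$ for $g\in\{r,\sigma\}$. The differential expression \eqref{diffSystem} is preserved because the material parameters are symmetric ($a_0=b_0$ and the legs are identical), and the vertex conditions \eqref{primaryBcThmDisp}--\eqref{secondBcThmRota} are preserved because they are written in the intrinsic vectorial form (continuity of $\vec g_\vt$ and $\vec\omega_\vt$, and vanishing of the force/moment balance sums), all of which are covariant under the orthogonal $R_g$ by the invariance observed in Remark~\ref{rem:edge_flip}. With commutation established, the spectral projections of the order-three unitary $U(r)$ onto $\{1,\omega,\cc\omega\}$ reduce $H$ and give $L^2(\Gamma)=\cH_1\oplus\cH_\omega\oplus\cH_{\cc\omega}$ with $\cH_1:=\ker\big(U(r)-\Id\big)$. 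Solving $U(r)\Psi=\omega\Psi$ reproduces $\Psi_3=\omega\Psi_2=\omega^2\Psi_1$ on the legs and, on the antenna, $u_0=\eta_0=0$ together with $w_0=\mathrm{i}v_0$ (the $\omega$-eigenvector of the planar rotation); since $U(r)$ is real and $\cc\omega=\omega^2$, complex conjugation gives $\cH_{\cc\omega}=\cc{\cH_\omega}$.

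Finally, on $\cH_1$ the reflection $U(\sigma)$ restricts to a unitary involution still commuting with $H$, so $\cH_1$ splits into its $\pm1$ eigenspaces, which I claim are $\cH_{\mathrm{trv}}$ and $\cH_{\mathrm{alt}}$. On $\cH_1$ the rotation invariance already forces $v_0=w_0=0$ and $\Psi_1=\Psi_2=\Psi_3$; intersecting with $U(\sigma)=\pm1$ then imposes the parities computed above. On the fixed leg the reflection plane sends $\vec j\mapsto-\vec j$ and fixes $\vec i,\vec k$, so $v,u$ are even and $w,\eta$ odd, while on the antenna $u_0$ is even and $\eta_0$ odd. Hence $U(\sigma)=+1$ keeps $u_0$ and the leg components $v_s,u_s$ (killing $\eta_0$ and $w_s,\eta_s$), giving exactly \eqref{eq:Hid}, and $U(\sigma)=-1$ keeps $\eta_0$ and $w_s,\eta_s$ (killing $u_0$ and $v_s,u_s$), giving \eqref{eq:Halt}. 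I expect the main obstacle to be the bookkeeping of Step one: fixing a single convention for the fiber action of both generators, in particular the pseudovector sign of $\eta$ under $\sigma$ and the precise phase in the $(v_0,w_0)$-rotation, since a slip there would swap the roles of the trivial and alternating summands or of $\cH_\omega$ and $\cH_{\cc\omega}$.
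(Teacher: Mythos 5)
Your proposal is correct and follows the same core strategy as the paper: identify the $D_3$ symmetry, build its unitary action on $L^2(\Gamma)$ (including the pseudovector sign of $\eta$ under the reflection, which the paper records in \eqref{eq:torsion_mapped}), verify commutation with $H$, and realize the four summands as eigenspaces of the generators. The organizational difference is that you decompose via the spectral theorem applied directly to the two commuting unitaries --- first the eigenspaces $1,\omega,\cc{\omega}$ of $U(r)$, then the $\pm1$ eigenspaces of $U(\sigma)$ restricted to the fixed space of $U(r)$ --- whereas the paper routes through representation theory: the isotypic projectors \eqref{eq:projector_rho}, the character identity \eqref{eq:std_proj_decomp} splitting the two-dimensional standard isotypic component along the $C_3$ subgroup, and then intertwiners, which for one-dimensional representations reduce to exactly the eigenvector equations you solve, so the actual computations coincide. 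Your framing is slightly more elementary; the paper's framing makes explicit that $\cH_\omega \oplus \cH_{\cc{\omega}}$ is the standard isotypic component, which is what underlies the multiplicity-two statement in the remark following the theorem. One validation of your closing caveat: with the paper's conventions \eqref{eq:matrices_R_F} and \eqref{eq:displacement_mapped}, solving $U(r)\Psi=\omega\Psi$ actually yields $w_0=-\mathrm{i}v_0$ and $\Psi_2=\cc{\omega}\Psi_1$, so the relations in \eqref{eq:Homega} describe the $\cc{\omega}$-eigenspace; the paper's own proof makes the same harmless swap between its displayed intertwiner equation and \eqref{eq:w_beam0}--\eqref{eq:w_legs}, and since complex conjugation exchanges $\cH_\omega$ and $\cH_{\cc{\omega}}$, this relabeling has no effect on the validity of \eqref{eq:reducing_AT}.
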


\begin{remark}
  A decomposition $\cH = \bigoplus_{\rho} \cH_\rho$ is
  \textbf{reducing} for an operator $H$ if $H$ is invariant on each of
  the subspaces and the operator domain $\Dom(H)$ is ``aligned''
  with respect to the decomposition, namely
  \begin{equation*}
    \Dom(H) = \bigoplus_{\rho} \big(\cH_\rho\cap\Dom(H)\big).
  \end{equation*}
  More precisely, the operator $H$ is required to commute with the
  orthogonal projector $\Prho$ onto $\cH_\rho$.
  This means that we can restrict $H$ to each subspace in turn and
  every aspect of the spectral data of the operator $H$ is the sum (or
  union) of the spectral data of the restricted parts.  In particular,
  since $\cH_\omega = \cc{\cH_{\cc{\omega}}}$, the eigenvalues of the
  corresponding restrictions are equal and thus each eigenvalue of the
  restriction $H_\omega = H\big|_{\cH_\omega}$ enters the spectrum of
  $H$ with multiplicity two.  Kinematically, these eigenvalues
  correspond to the rotational wobbles of the antenna beam.
\end{remark}

\begin{figure}[h]
	\centering
	\includegraphics[width=0.375\textwidth]{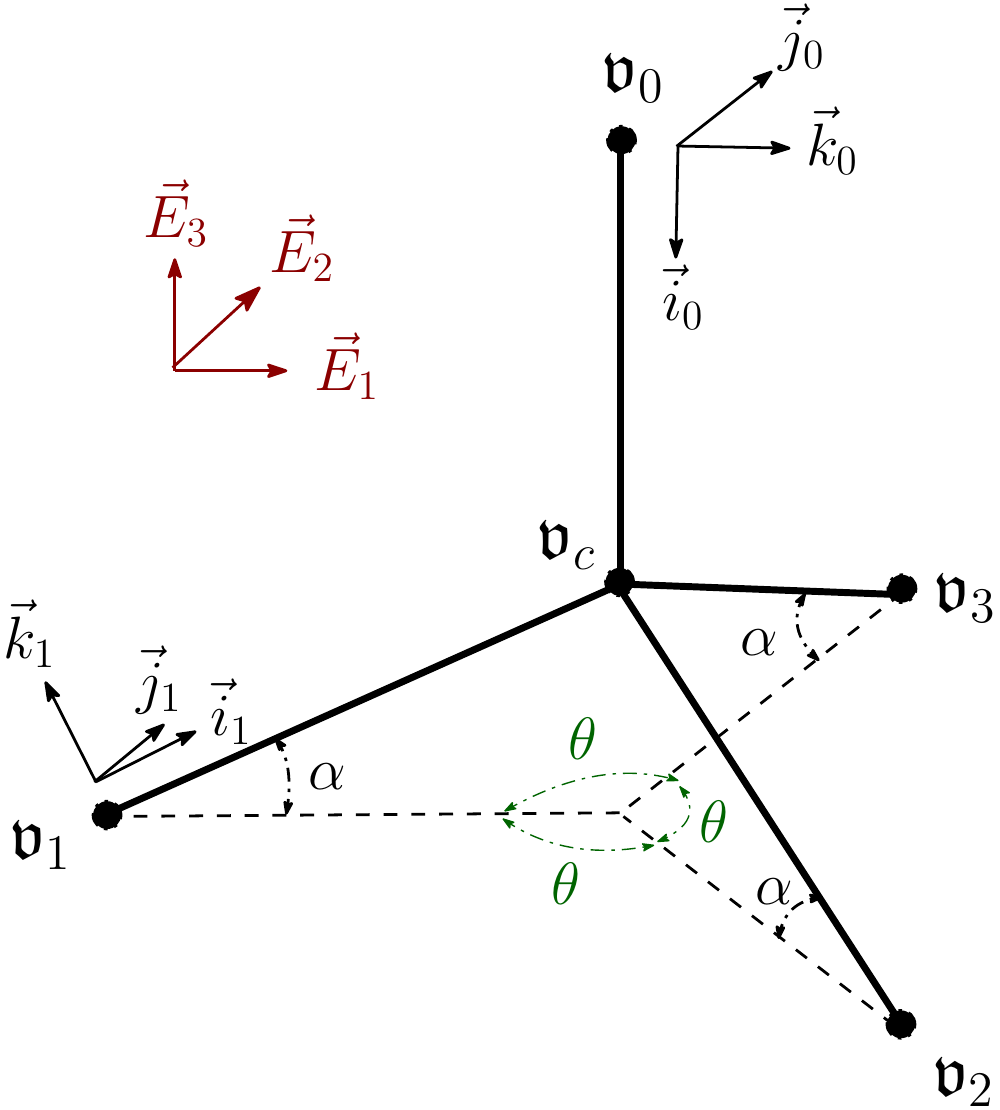}
	\caption{Geometry of three dimensional star graph in its equilibrium
		state.}
	\label{threeDimGraphExample}
\end{figure}

\begin{proof}[Proof of Theorem~\ref{thm:reducing_antenna_tower}]
  The graph $\Gamma$ is invariant under the following geometric
  transformations and their products: $R$ acting as the rotation
  counterclockwise by $\theta = 2\pi/3$ around the axis $-\vec{i}_0$,
  and $F$ acting as the reflection with respect to the plane spanned
  by $\vec{i}_1$ and $\vec{i}_0$.  These transformations generate the
  group $G = D_3$, the dihedral group\footnote{It is isomorphic to the
    symmetric group $S_3$ of permutations of 3 elements.} of degree
  $3$, according to the presentation
\begin{equation}
  \label{eq:group_presentation}
  G = \left\langle R,F ~| ~R^3 = I, ~F^2 = I, ~FRFR = I \right\rangle.
\end{equation}

Let us now fix the following local bases: take $\vec{j}_1$ to be
orthogonal to the plane spanned by $\vec{i}_0$ and $\vec{i}_1$; let
$\vec{j}_0=\vec{j}_1$, see Figure~\ref{threeDimGraphExample}; this
determines $\vec{k}_0$ and $\vec{k}_1$.  More specifically, 
\begin{equation}
  \label{eq:4star_bases}
  \vec{i}_0 =
  \begin{pmatrix}
    0\\0\\-1
  \end{pmatrix},
  \ 
  \vec{j}_0 =
  \begin{pmatrix}
    0\\1\\0
  \end{pmatrix},
  \ 
  \vec{k}_0 = 
  \begin{pmatrix}
    1\\0\\0
  \end{pmatrix},
  \quad
  \vec{i}_1 =
  \begin{pmatrix}
    \cos(\alpha)\\0\\\sin(\alpha)
  \end{pmatrix},
  \ 
  \vec{j}_0 =
  \begin{pmatrix}
    0\\1\\0
  \end{pmatrix},
  \ 
  \vec{k}_1 =
  \begin{pmatrix}
    -\sin(\alpha)\\0\\\cos(\alpha)
  \end{pmatrix}.
\end{equation}
We also assume
\begin{equation}
  \label{eq:rotation_fo_basis}
  \vec{i}_2 = R\vec{i}_1,\quad
  \vec{j}_2 = R\vec{j}_1,\quad 
  \vec{k}_2 = R\vec{k}_1,
  \quad\mbox{and}\quad
  \vec{i}_3 = R\vec{i}_2,\quad
  \vec{j}_3 = R\vec{j}_2,\quad 
  \vec{k}_3 = R\vec{k}_2.
\end{equation}
With these choices, we obtain the following geometric
representation\footnote{With a slight abuse of notation we use the
  same letters for the group elements and the matrices realizing their
  action on the corresponding linear space} of $G$,
\begin{equation}
  \label{eq:matrices_R_F}
  R =
  \begin{pmatrix}
    \cos(\theta) & -\sin(\theta) & 0\\
    \sin(\theta) & \cos(\theta) & 0\\
    0 & 0 & 1
  \end{pmatrix},
  \qquad
  F = 
  \begin{pmatrix}
    1 & 0 & 0 \\
    0 & -1 & 0 \\
    0 & 0 & 1
  \end{pmatrix}.
\end{equation}

We now have to describe the action of $G$ on the functions
$\{v_e,w_e,u_e,\eta_e\}$ defined along the beams $e$ of the frame.  We
note that the space transformation $T$ maps a vector $\vec{g}$ based
at a material point $\vec{x}$ to the vector $T\vec{g}$ at the point
$T\vec{x}$.  Therefore
\begin{equation}
  \label{eq:displacement_mapped}
  R :
  \begin{pmatrix}
    \vec{g}_0 \\ \vec{g}_1 \\ \vec{g}_2 \\ \vec{g}_3
  \end{pmatrix}
  \mapsto
  \begin{pmatrix}
    R\vec{g}_0 \\ R\vec{g}_3 \\ R\vec{g}_1 \\ R\vec{g}_2    
  \end{pmatrix}
  \quad\mbox{and}\quad
  F :
  \begin{pmatrix}
    \vec{g}_0 \\ \vec{g}_1 \\ \vec{g}_2 \\ \vec{g}_3
  \end{pmatrix}
  \mapsto
  \begin{pmatrix}
    F\vec{g}_0 \\ F\vec{g}_1 \\ F\vec{g}_3 \\ F\vec{g}_2    
  \end{pmatrix}
\end{equation}
On the other hand a transformation $\cR$ based at a point $\vec{x}$
gets mapped to $T\cR T^{-1}$ at the point $T\vec{x}$.  By direct
calculation from definition \eqref{eq:torsion_def} (or by using
geometric intuition), we obtain that
\begin{equation}
  \label{eq:torsion_mapped}
  R :
  \begin{pmatrix}
    \eta_0 \\ \eta_1 \\ \eta_2 \\ \eta_3
  \end{pmatrix}
  \mapsto
  \begin{pmatrix}
    \eta_0 \\ \eta_3 \\ \eta_1 \\ \eta_2    
  \end{pmatrix}
  \quad\mbox{and}\quad
  F :
  \begin{pmatrix}
    \eta_0 \\ \eta_1 \\ \eta_2 \\ \eta_3
  \end{pmatrix}
  \mapsto
  \begin{pmatrix}
    -\eta_0 \\ -\eta_1 \\ -\eta_3 \\ -\eta_2    
  \end{pmatrix}.
\end{equation}
Since, by definition,
$\vec{g}_n = u_n \vec{i}_n + w_n \vec{j}_n + v_n \vec{k}_n$, mapping
rules \eqref{eq:displacement_mapped} and \eqref{eq:torsion_mapped}
induce the linear action of $R$ and $F$ on the space $L^2(\Gamma)$.
These linear actions commute with the operator $H$ (in particular,
preserving its domain) and form a representation of the group $G$.

It is well-known
\cite{Mackey_representations_in_physics,FultonHarris_RT} that the
Hilbert space $L^2(\Gamma)$ with an action of a finite group $G$ on it
can be decomposed into a sum of \textbf{isotypic components}, i.e.\
the subspaces consisting of all copies of a given irreducible
representation (\textbf{irrep}) contained in $L^2(\Gamma)$.
Concretely, if $\rho$ is an irrep with character
$\chi_\rho : G \to \bC$, then
\begin{equation}
  \label{eq:projector_rho}
  \Prho = \frac{\dim \rho}{|G|} \sum_{g\in G} \cc{\chi_\rho(g)} g
\end{equation}
is the orthogonal projector onto the isotypic component of $\rho$, see
\cite[Equation (2.32)]{FultonHarris_RT}.  Here $\dim\rho$ is the
dimension of the representation and the last $g$ should be understood
as the representative of the group element acting on the Hilbert space
in question.  It is a classical result of representation theory that
the projectors $\Prho$ corresponding to different irreps are mutually
orthogonal and their sum over all irreps of $G$ is identity.  Since
$H$ commutes with every $g\in G$, the projector $\Prho$ commutes with
$H$ and therefore the subspace $\Prho(L^2(\Gamma))$ is reducing.

Our group $G=D_3$ has three irreducible representations, known as
trivial (identity), alternating and standard.  The first two are
one-dimensional and the last one is two-dimensional.  We will denote
the corresponding isotypic components by $\cH_\mathrm{trv}$,
$\cH_\mathrm{alt}$ and $\cH_\mathrm{std}$.  The group $D_3$ contains
the cyclic group $C_3$ as a subgroup; it is generated by the rotations
$R$,
\begin{equation}
  \label{eq:C3_presentation}
  C_3 = \left\langle R ~|~ R^3 = I\right\rangle \subset D_3.  
\end{equation}
The group $C_3$ also have three repesentations: identity,
$\omega=e^{2\pi i/3}$, and $\cc{\omega}$, all one-dimensional.

The character of the standard representation of $D_3$ is non-zero only
on the elements of $C_3\subset D_3$, more precisely,
\begin{equation}
  \label{eq:std_character}
  \chi_\mathrm{std}(g) =
  \begin{cases}
    \chi_\omega(g) + \chi_{\cc\omega}(g), & g\in C_3, \\
    0, & g\not\in C_3,
  \end{cases}
\end{equation}
which gives
\begin{equation}
  \label{eq:std_proj_decomp}
  \mathbf{P}_\mathrm{std}
  = \frac{2}{|D_3|} \sum_{g\in D_3} \cc{\chi_\mathrm{std}(g)} g
  = \frac{1}{|C_3|} \sum_{g\in C_3} \left(\cc{\chi_\omega(g)}
    + \cc{\chi_{\cc\omega}(g)}\right) g
  = \mathbf{P}_\omega + \mathbf{P}_{\cc\omega}.
\end{equation}
This, together with observation that
$\chi_{\cc\omega} = \cc{\chi_\omega(g)}$ and therefore
$\mathbf{P}_{\cc\omega} = \cc{\mathbf{P}_\omega}$, proves
decomposition \eqref{eq:reducing_AT} in principle.  To specify details
of the individual spaces as in
equations~\eqref{eq:Hid}--\eqref{eq:Homega}, we can proceed by
definition, computing the ranges of the corresponding projectors
$\mathbf{P}_\mathrm{trv}$, $\mathbf{P}_\mathrm{alt}$ and
$\mathbf{P}_\omega$ using equation~\eqref{eq:projector_rho}.  This
requires summation over $|D_3|=6$ or $|C_3|=3$ group elements and is
rather tedious. 

Instead we will use the fact that all representations are
one-dimensional and therefore the isotypic components are isomorphic
to the corresponding spaces of \textbf{intertwiners}.\footnote{The
  action of an operator on the space of intertwiners is somewhat
  different from its action on the isotypic component and results in a
  decomposition with some favorable spectral properties
  \cite{BanBerJoyLiu_prep17}.  However the two notions coincide for a
  one-dimensional representation.}  Recall the following definition:
let $G$ be a finite group with two representations
$\rho : G \rightarrow \text{GL}(V_\rho)$ and
$\sigma : G \rightarrow \text{GL}(V_\sigma)$. The vector space
homomorphism $\phi :V_{\rho} \rightarrow V_{\sigma}$ is said to be an
\textbf{intertwiner} if it satisfies
\begin{equation}
  \label{eq:intertwiner}
  \forall g \in G, \quad \sigma(g)\phi = \phi \rho(g).
\end{equation}
The vector space of all intertwiners is denoted by
$\Hom_G(V_\rho,V_\sigma)$.

In our setting $V_\sigma$ is $L^2(\Gamma)$ with the action of $G$
defined by \eqref{eq:displacement_mapped}--\eqref{eq:torsion_mapped};
$\rho$ will be one of the irreps of $G$.  Since we only deal with
1-dimensional representations (i.e.\ $\rho(g)$ is a scalar), $\phi$
are simply the eigevectors of $R$ and $F$ on $L^2(\Gamma)$.  The
detailed calculations are performed in the next three sections.
\end{proof}

\subsubsection{Trivial irrep}
We first consider the trivial (identity) irreducible representation
$\rho = \mathrm{trv}$ of $D_3$, given by $V_\rho = \bC^1$, $\rho(R)=1$,
and $\rho(F)=1$.  We will now use intertwiner
condition~\eqref{eq:intertwiner} to calculate the intertwiners which
are just vectors from $L^2(\Gamma)$.  Naturally, it is enough to check
condition~\eqref{eq:intertwiner} only on the generators of the group.

Applying the intertwiner condition to $R$ from
\eqref{eq:displacement_mapped}, we have
\begin{equation*}
  R
  \begin{pmatrix}
    \vec{g}_0 \\ \vec{g}_1 \\ \vec{g}_2 \\ \vec{g}_3
  \end{pmatrix}
  = 
  \begin{pmatrix}
    R\vec{g}_0 \\ R\vec{g}_3 \\ R\vec{g}_1 \\ R\vec{g}_2    
  \end{pmatrix}
  = \begin{pmatrix}
    \vec{g}_0 \\ \vec{g}_1 \\ \vec{g}_2 \\ \vec{g}_3
  \end{pmatrix} \cdot 1,
\end{equation*}
in particular $R \vec{g}_0 = \vec{g}_0$,
which means that for any $x$, $\vec{g}_0(x)$ is proportional to the
eigenvector of $R$ with eigenvalue 1, see \eqref{eq:matrices_R_F} and therefore
\begin{equation}
  \label{eq:id_v0w0_zero}
  v_0 \equiv 0
  \qquad\mbox{and}\qquad
  w_0\equiv 0.  
\end{equation}
We also have $R \vec{g}_1 = \vec{g}_2$ and $R \vec{g}_2 = \vec{g}_3$
yielding, since $R$ is orthogonal,
\begin{equation*}
  u_2 := \vec{i}_2 \cdot \vec{g}_2 
  = \big(R \vec{i}_1\big) \cdot \big( R\vec{g}_1 \big)
  = \vec{i}_1 \cdot \vec{g}_1
  = u_1,
\end{equation*}
and so on,
\begin{equation}
  \label{eq:all_around_equal}
  v_1=v_2=v_3,\qquad w_1=w_2=w_3
  \qquad\mbox{and}\qquad
  u_1=u_2=u_3.
\end{equation}

Switching our attention to $F$, we have from equation
\eqref{eq:displacement_mapped}
\begin{equation*}
  w_1 := \vec{j}_1 \cdot \vec{g}_1
  = \vec{j}_1 \cdot \big( F\vec{g}_1 \big)
  = \big( F\vec{j}_1 \big) \cdot \vec{g}_1
  = -\vec{j}_1 \cdot \vec{g}_1
  = -w_1,
\end{equation*}
and thus
\begin{equation}
  \label{eq:w_are_0}
  w_1=w_2=w_3 \equiv 0.  
\end{equation}
Finally,
equation~\eqref{eq:torsion_mapped} gives
\begin{equation*}
  F
  \begin{pmatrix}
    \eta_0 \\ \eta_1 \\ \eta_2 \\ \eta_3
  \end{pmatrix}
  =
  \begin{pmatrix}
    -\eta_0 \\ -\eta_1 \\ -\eta_3 \\ -\eta_2    
  \end{pmatrix}
  =
  \begin{pmatrix}
    \eta_0 \\ \eta_1 \\ \eta_2 \\ \eta_3
  \end{pmatrix},
\end{equation*}
therefore all torsion fields are identically zero,
\begin{equation}
  \label{eq:torsion_zero_id}
  \eta_1=\eta_2=\eta_3 \equiv 0.  
\end{equation}
It is an easy check that the
remaining conditions result in no additional restrictions on the
undetermined $u_0$, $u$ and $v$, where we abbreviated $u := u_s$ and
$v := v_s$ for $s=1,2,3$.  The conditions we obtained describe the space
$\cH_{\mathrm{trv}}$.

We now describe the \emph{domain} of the operator
$H_\mathrm{trv} := H \big|_{\cH_\mathrm{trv}}$.  First of all, the
functions $u_0$, $u$ and $v$ need to come from Sobolev spaces
$H^2(0,\ell_0)$, $H^2(0,\ell)$ and $H^4(0,\ell)$ correspondingly.
Continuity of displacement (\ref{primaryBcThmDisp}) at the central
joint becomes
\begin{equation}
u(\ell) \vec i_1 + v(\ell) \vec k_1 =  u_0(\ell_0) \vec i_0.
\end{equation}
Expressing $\vec i_0$ in the local coordinate system of $e_1$, i.e.
\begin{equation}
\label{i0Expansion}
\vec i_0 = -\sin(\alpha) \vec i_1 - \cos(\alpha) \vec k_1
\end{equation}
implies that at vertex $c$ the relations 
\begin{equation}
\label{trivialB1}
u(\ell) + u_0(\ell_0) \sin(\alpha) = 0, \hspace{1cm} v(\ell) + u_0(\ell_0) \cos(\alpha) = 0
\end{equation}
should be satisfied. Using \eqref{eq:id_v0w0_zero},
\eqref{eq:all_around_equal} and \eqref{eq:w_are_0} in (\ref{primaryBcThmRota}), we get
\begin{equation}
  \label{trivialB3}
    v'(\ell) = 0.
\end{equation}
Similarly, the balance of forces, equation~(\ref{secondBcThmDisp}),
reduces to
\begin{equation*}
  av'''(\ell)(\vec k_1 + \vec k_2 + \vec k_3)
  + cu'(\ell)(\vec i_1 + \vec i_2 + \vec i_3)
  + cu'_0(\ell_0) \vec i_0 = 0
\end{equation*}
Applying identities
$\vec i_1 + \vec i_2 + \vec i_3 = -3 \vec i_0 \sin(\alpha)$ and
$\vec k_1 + \vec k_2 + \vec k_3 = -3 \vec i_0 \cos(\alpha)$, it takes
the form
\begin{equation}
  \label{trivialB2}
  3a v'''(\ell) \cos(\alpha) + 3c u'(\ell) \sin(\alpha) - c_0u'_0(\ell_0) = 0
\end{equation}
Finally, due to \eqref{eq:w_are_0}, \eqref{eq:torsion_zero_id} and
$\vec j_1 + \vec j_2 + \vec j_3 = 0$, the balance of moments condition
(\ref{secondBcThmRota}) will be automatically satisfied.

The fixed end at $\vt_1$ results in conditions
\begin{equation}
  \label{eq:fixed_end_x}
  u(0) = v(0) = v'(0) = 0,
\end{equation}
while free end at $\vt_0$ gives
\begin{equation}
  \label{eq:free_end_x0}
  u_0'(0)=0.
\end{equation}

The eigenproblem for the operator $H_\mathrm{trv}$ becomes
\begin{equation}
  \label{eienProblemTrivial}
  a \frac{d^4v(x)}{dx^4} = \lambda v(x), \hspace{1cm}
  c\frac{d^2u(x)}{dx^2} = \lambda u(x), \hspace{1cm}
  c_0\frac{d^2u_0(x)}{dx^2} = \lambda u_0(x)
\end{equation}
together with conditions \eqref{trivialB1}, \eqref{trivialB3},
\eqref{trivialB2}, \eqref{eq:fixed_end_x} and
\eqref{eq:free_end_x0}.

Imposing conditions \eqref{eq:fixed_end_x} and
\eqref{eq:free_end_x0} first, we get
\begin{equation}
  v(x) = A \big(\sinh(\mu_a x) - \sin(\mu_a x)\big)
  + B \big(\cosh(\mu_a x) - \cos(\mu_a x)\big)
\end{equation}
and
\begin{equation}
  u(x) = C \sin(\beta_c x),
  \qquad
  u_0(x) = D \cos(\beta_{c_0} x)
\end{equation}
where $\mu_a := (\lambda/a)^{1/4}$, $\beta_d := (\lambda/d)^{1/2}$
and $\beta_{c_0} := (\lambda/c_0)^{1/2}$. 
Application of conditions (\ref{trivialB1}), (\ref{trivialB3}) and
(\ref{trivialB2}) brings us to the equivalent problem of finding
non-trivial coefficient vector $(A~B~C~D)^T$ in the kernel of the matrix
$M_\mathrm{trv} = M_\mathrm{trv}(\lambda)$ defined as (see \eqref{eq:notationMatrix} for the definition of notations)
\begin{equation*}
{
    M_\mathrm{trv}(\lambda) =
    \begin{pmatrix}
    C_{\mu_a \ell}^{-} & S_{\mu_a \ell}^{+} & 0 & 0\\
     0 & 0 & S_{\beta_c \ell} & S_{\alpha} C_{\beta_{c_0} \ell_0}  \\
      S_{\mu_a \ell}^{-} & C_{\mu_a \ell}^{-} & 0 & C_{\alpha} C_{\beta_{c_0} \ell_0}\\
      3a\mu_a^3 C_{\alpha} C_{\mu_a \ell}^{+} & 3a\mu_a^3 C_{\alpha} S_{\mu_a \ell}^{-} & 3c\beta_c S_{\alpha} C_{\beta_c \ell} &   c_0\beta_{c_0} S_{\beta_{c_0} \ell_0} 
    \end{pmatrix}.
  }    
\end{equation*}
The eigenvalues $\lambda$ are determined as the points in which
$\det(M_\mathrm{trv}) = 0$.

\begin{exmp}
  \label{ex:antenna_numerical}
  We set all material constants and beam lengths to 1.  Figure
  \ref{detVariation}(left) plots the value of $\det(M_\mathrm{trv})$.
  Figure \ref{trivialEigenPlots} shows the components of the first
  eigenfunction corresponding to the trivial representation.  For
  verification purposes, the latter plot was obtained from
  finite-element calculation for the \emph{entire structure}, without
  symmetry reduction, i.e.\ without enforcing conditions
  \eqref{eq:id_v0w0_zero}--\eqref{eq:torsion_zero_id}.  Thus we can
  confirm by a different method that the relevant fields are zero (up
  to a numerical error).
  \begin{figure}
    \center
    \subfigure{
      \includegraphics[width=0.3\textwidth]{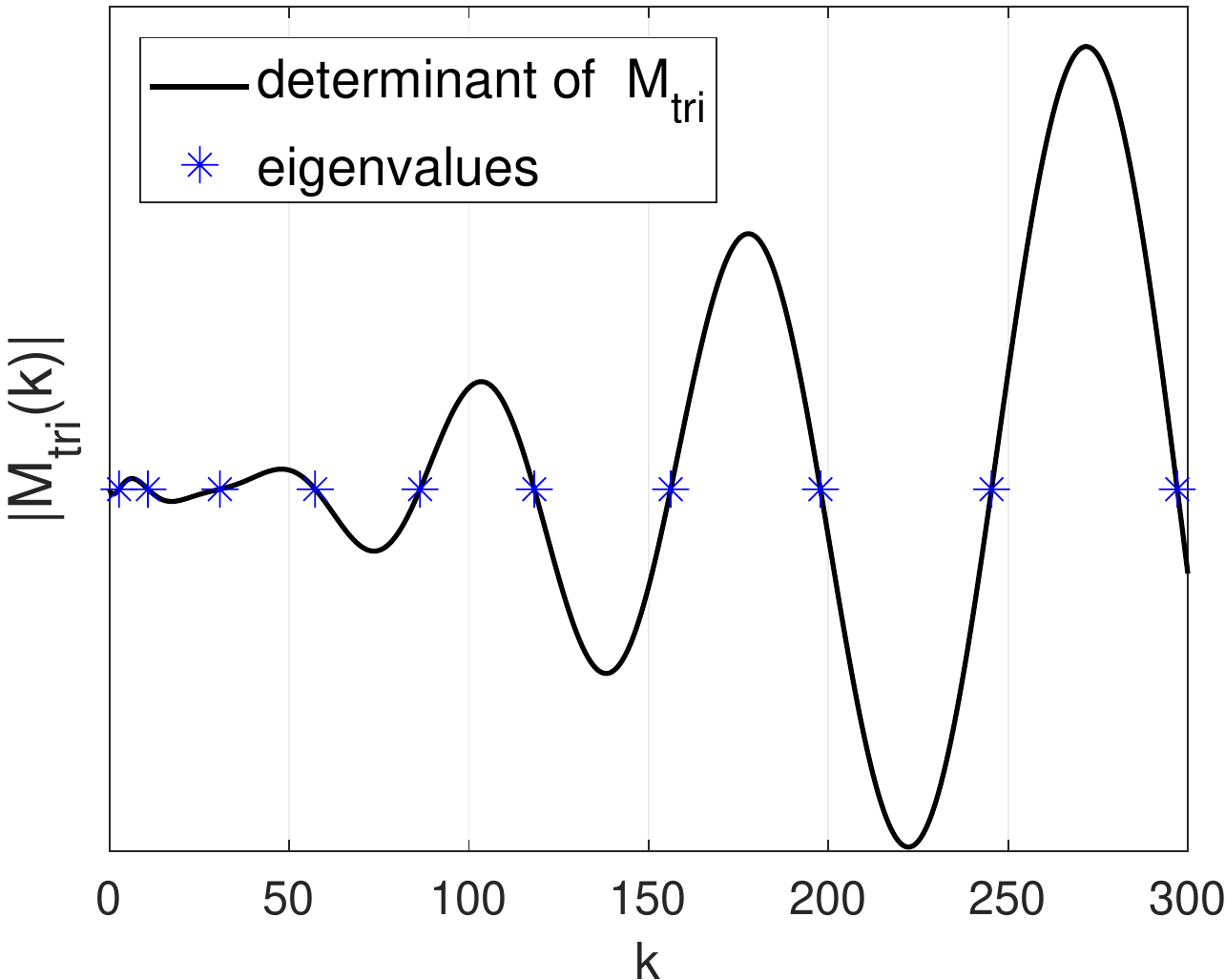}
    }
    \hspace{0.1mm}
    \subfigure{
      \includegraphics[width=0.3\textwidth]{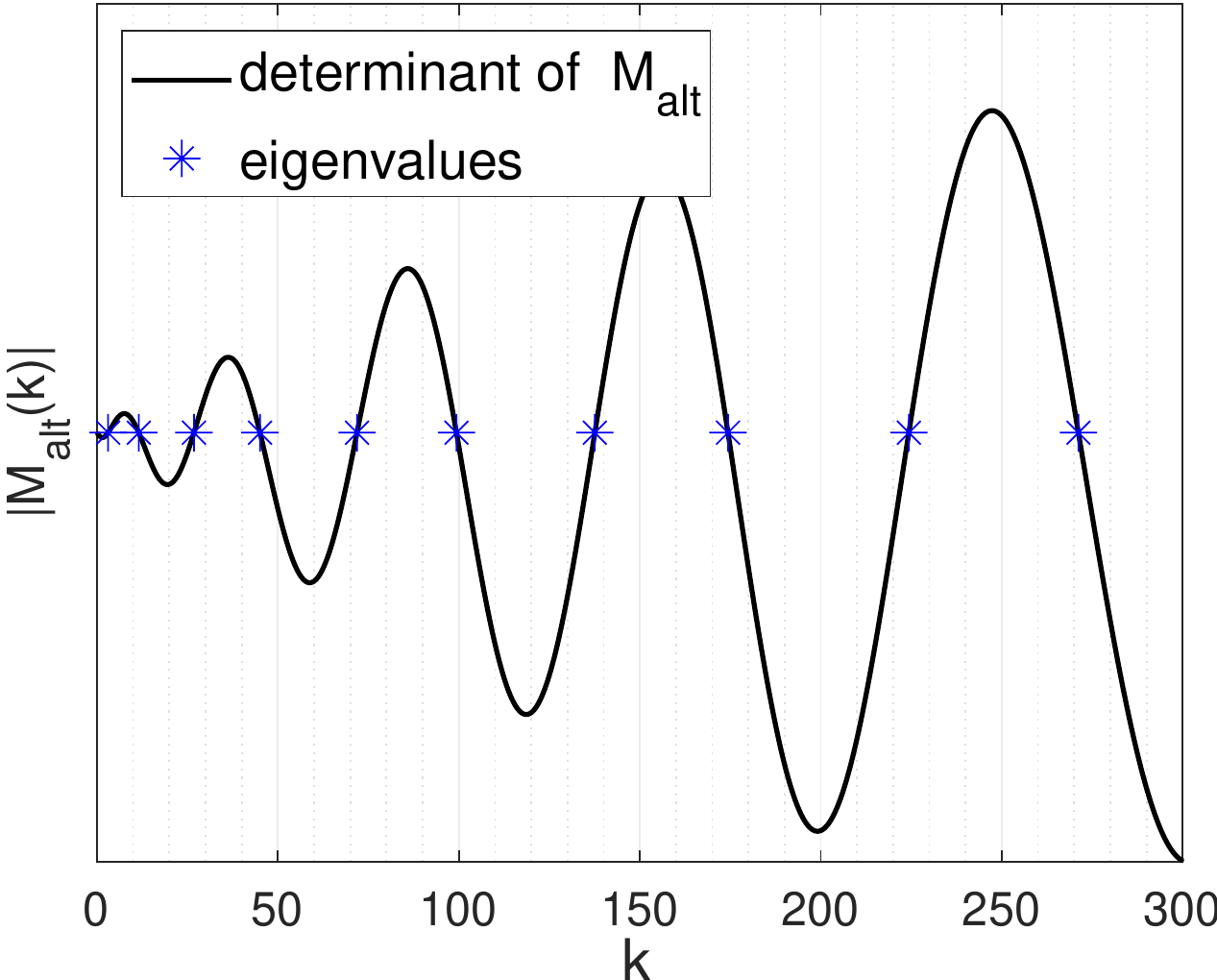}
    }
    \hspace{0.1mm}
    \subfigure{
      \includegraphics[width=0.3\textwidth]{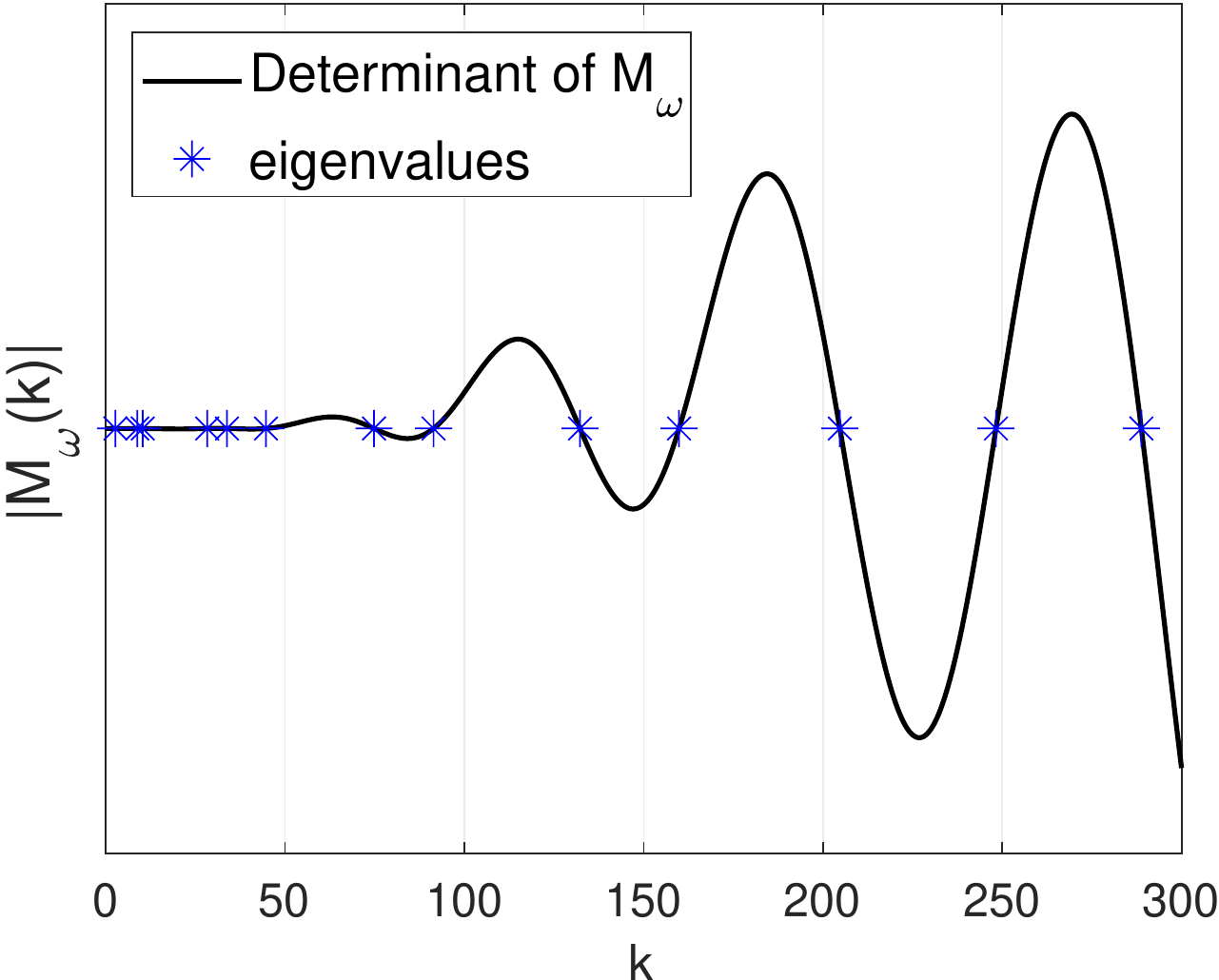}
    }
    \caption{Variation of determinant of matrices corresponding
      irreducible representations and their corresponding eigenvalues:
      (left) trivial $M_\mathrm{trv}$, (middle) alternating
      $M_\mathrm{alt}$, and (right) standard
      $M_\omega$. All the results are based on unit materials parameters and beams lengths.}
    \label{detVariation}
  \end{figure}

  \begin{figure}
    \center
    \subfigure{
      \includegraphics[width=0.45\textwidth]{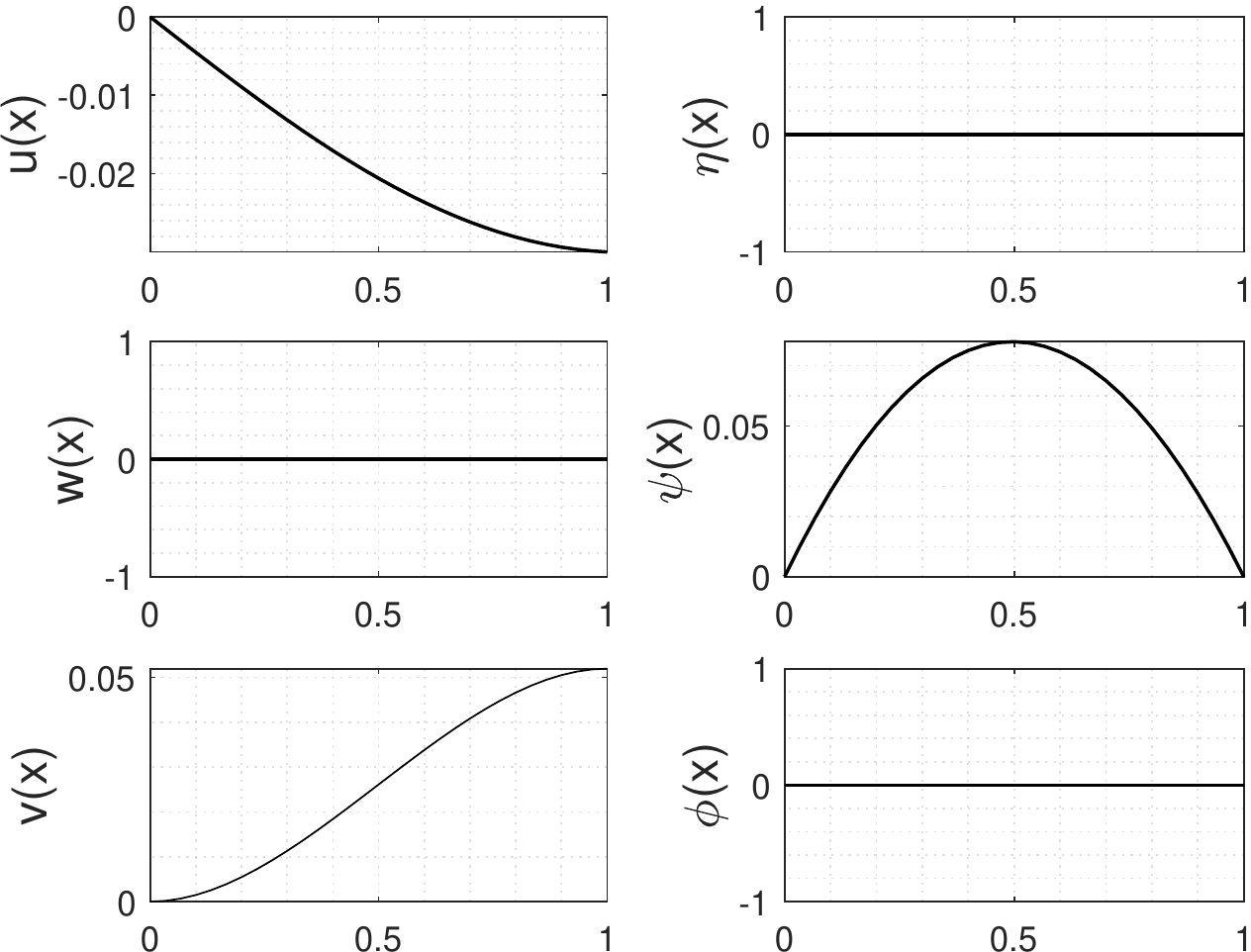}
    }
    \hspace{5mm}
    \subfigure{
      \includegraphics[width=0.45\textwidth]{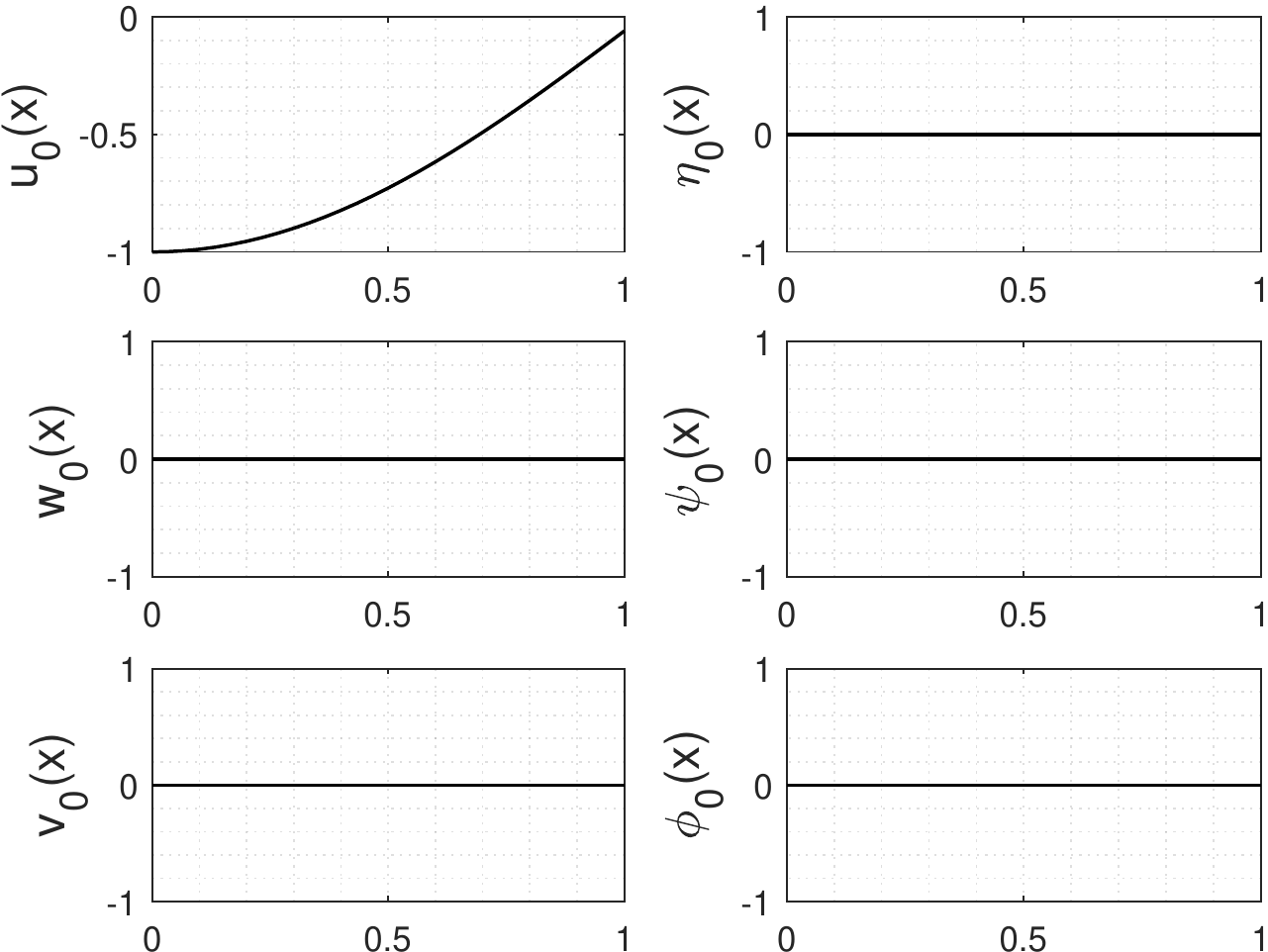}
    }
    \caption{Plot of the components of the first eigenfuction from
      $\cH_\mathrm{trv}$. Plots are obtained from a finite elements
      numerical computation and are displayed in the local coordinate
      system of the corresponding edge. All the results are based on unit materials parameters and beams lengths.}
    \label{trivialEigenPlots}
  \end{figure}
\end{exmp}

\subsubsection{Alternating irrep} 
For the alternating representation, $\rho(R)=1$ and $\rho(F)=-1$.
Indentities~\eqref{eq:id_v0w0_zero} and \eqref{eq:all_around_equal}
still apply in this case, and we also have from
equation~\eqref{eq:torsion_mapped}(left)
\begin{equation}
  \label{eq:alt_eta_equal}
  \eta_1=\eta_2=\eta_3 =: \eta.
\end{equation}
Turning to $F$, condition $F\vec{g}_0 = -\vec{g}_0$ yields
\begin{equation}
  \label{eq:alt_u0_zero}
  u_0 \equiv 0
\end{equation}
(in addition to $v_0=w_0\equiv0$ we already have), while $F\vec{g}_1 =
-\vec{g}_1$ gives
\begin{equation*}
  u_1 := \vec{i}_1 \cdot \vec{g}_1
  = - \vec{i}_1 \cdot \big( F\vec{g}_1 \big)
  = -\big(F\vec{i}_1\big) \cdot \vec{g}_1 
  = -\vec{i}_1 \cdot \vec{g}_1
  = -u_1,
\end{equation*}
and, similarly, $v_1=-v_1$, therefore
\begin{equation}
  \label{eq:alt_uv_zero}
  u_1=u_2=u_3 \equiv 0\qquad\mbox{and}\qquad  
  v_1=v_2=v_3 \equiv 0.
\end{equation}
Condition \eqref{eq:torsion_mapped}(right) results in no further
restrictions.  We have arrived to \eqref{eq:Halt} and the problem is
fully described in terms of remaining degrees of freeedom $\eta_0$,
$w$ and $\eta$.

We will now describe the domain of the operator $H_\mathrm{alt} :=
H\big|_{\cH_\mathrm{alt}}$.  Identities \eqref{eq:id_v0w0_zero},
\eqref{eq:alt_u0_zero} and \eqref{eq:alt_uv_zero} reduce the
displacement continuity to 
\begin{equation}
  \label{AlternatB1}
  w(\ell) = 0.
\end{equation}
Using (\ref{i0Expansion}) and the continuity of rotation reduces to
\begin{equation}
  \label{AlternatB2}
  \eta(\ell) + \eta_0(\ell_0)\sin(\alpha) = 0, \hspace{1cm} 
  w'(\ell) + \eta_0(\ell_0) \cos(\alpha) = 0.
\end{equation}
Balance of forces, equation~\eqref{secondBcThmDisp} becomes
$bw'''(\ell)(\vec j_1 + \vec j_2 + \vec j_3) = 0$, which is satisfied
automatically due to $\vec j_1 + \vec j_2 + \vec j_3 = 0$.  Finally,
balance of moments, equation~(\ref{secondBcThmRota}), takes the form
\begin{equation*}
  -bw''(\ell)(\vec k_1 + \vec k_2 + \vec k_3)
  + d \eta'(\ell)(\vec i_1 + \vec i_2 + \vec i_3)
  + d \eta_0'(\ell_0) = 0,
\end{equation*}
and, via identities $\vec i_1 + \vec i_2 + \vec i_3 = -3 \vec i_0
\sin(\alpha)$ and $\vec k_1 + \vec k_2 + \vec k_3 = -3 \vec i_0
\cos(\alpha)$, reduces to 
\begin{equation}
  \label{AlternatB3}
  3bw''(\ell)\cos(\alpha) - 3d\eta'(\ell) \sin(\alpha) + d_0 \eta'_0(\ell_0) = 0.
\end{equation}
The conditions at the degree 1 endpoints result in
\begin{equation}
  \label{eq:AlternatB4}
  \eta(0)=w(0)=w'(0)=0
  \qquad\mbox{and}\qquad
  \eta_0'(0)=0.
\end{equation}

Solving the eigenvalue equations and imposing endpoint
conditions~\eqref{eq:AlternatB4} results in
\begin{equation*}
  w(x) = A \big(\sinh(\mu_b x) - \sin(\mu_b x)\big)
  + B \big(\cosh(\mu_b x) - \cos(\mu_b x)\big)
\end{equation*}
and
\begin{equation*}
  \eta(x) = C \sin(\beta_d x), \hspace{1cm} \eta_0(x) = D \cos(\beta_{d_0} x)
\end{equation*}
where $\mu_b = (\lambda/b)^{1/4}$, $\beta_d = (\lambda/d)^{1/2}$ and
$\beta_{d_0} = (\lambda/d_0)^{1/2}$.  Applying the vertex
conditions (\ref{AlternatB1}), (\ref{AlternatB2}) and
(\ref{AlternatB3}), the eigenvalue problem reduces to condition
$\det(M_\mathrm{alt}(\lambda)) = 0$, where
\[
  { M_\mathrm{alt}(\lambda) =
    \begin{pmatrix}
      S_{\mu_b \ell}^{-} & C_{\mu_b \ell}^{-} & 0 & 0 \\
      0 & 0 & S_{\beta_d \ell} & S_{\alpha} C_{\beta_{d_0} \ell_0}\\
      \mu_b C_{\mu_b \ell}^{-} & \mu_b S_{\mu_b \ell}^{+} &0 & C_{\alpha} C_{\beta_{d_0} \ell_0} \\
      3b\mu_b^2 C_{\alpha} S_{\mu_b \ell}^{+} & 3b\mu_b^2 C_{\alpha} C_{\mu_b \ell}^{+} & -3\mu d S_{\alpha}C_{\beta_d \ell} & -d_0\beta_{d_0} S_{\beta_{d_0} \ell_0}
    \end{pmatrix}.
  }
\]

\begin{exmp}
  Continuing Example~\ref{ex:antenna_numerical}, Figure
  \ref{detVariation}(middle) plots the determinant of the matrix
  $M_\mathrm{alt}$ with the roots highlighted.  Figure \ref{altEigenPlots}
  shows the components of the eigenfunction corresponding to the first
  eigenvalue of alternating representation.
  
  \begin{figure}[htbp]
    \center
    \subfigure{
      \includegraphics[width=0.45\textwidth]{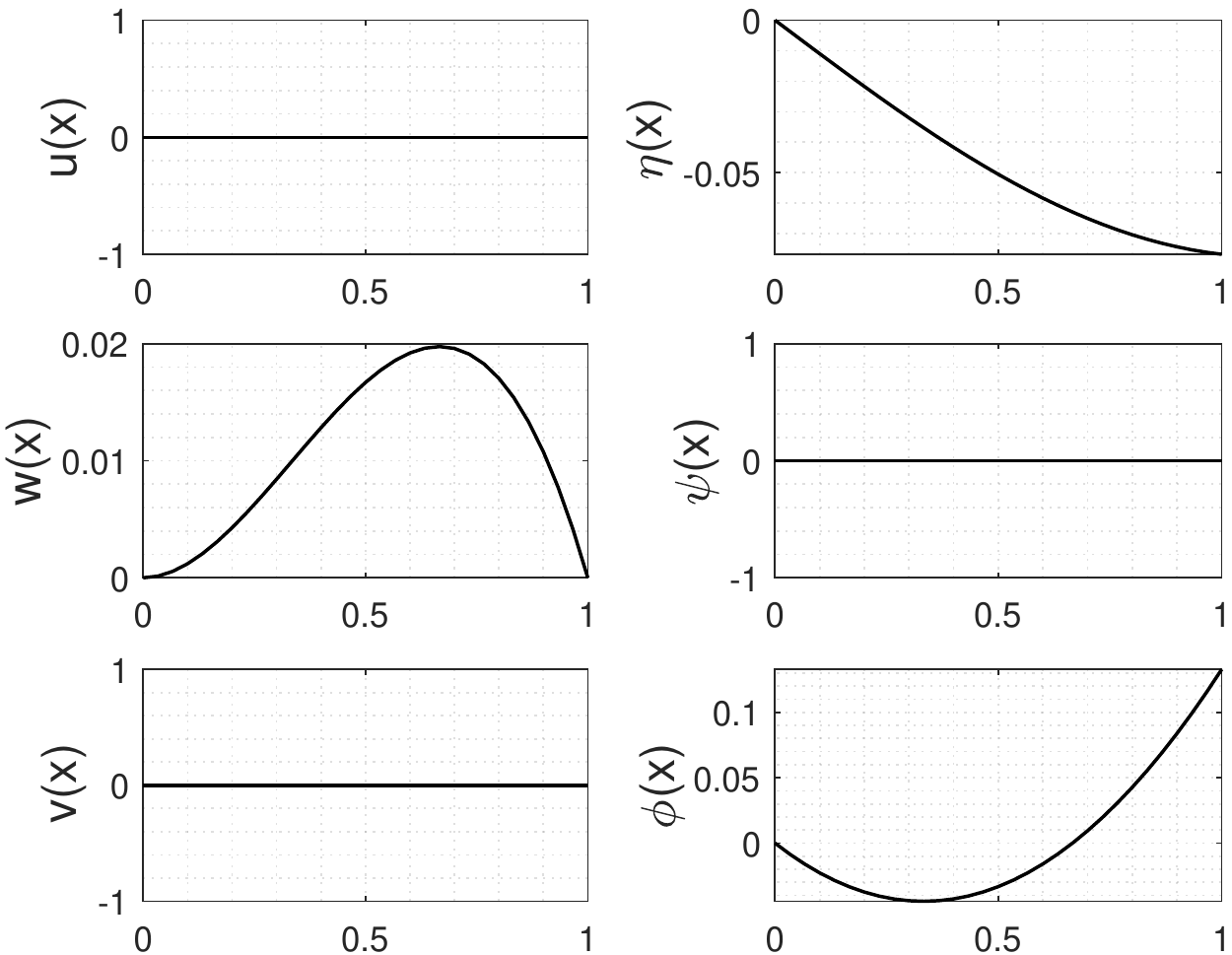}
    }
    \hspace{5mm}
    \subfigure{
      \includegraphics[width=0.45\textwidth]{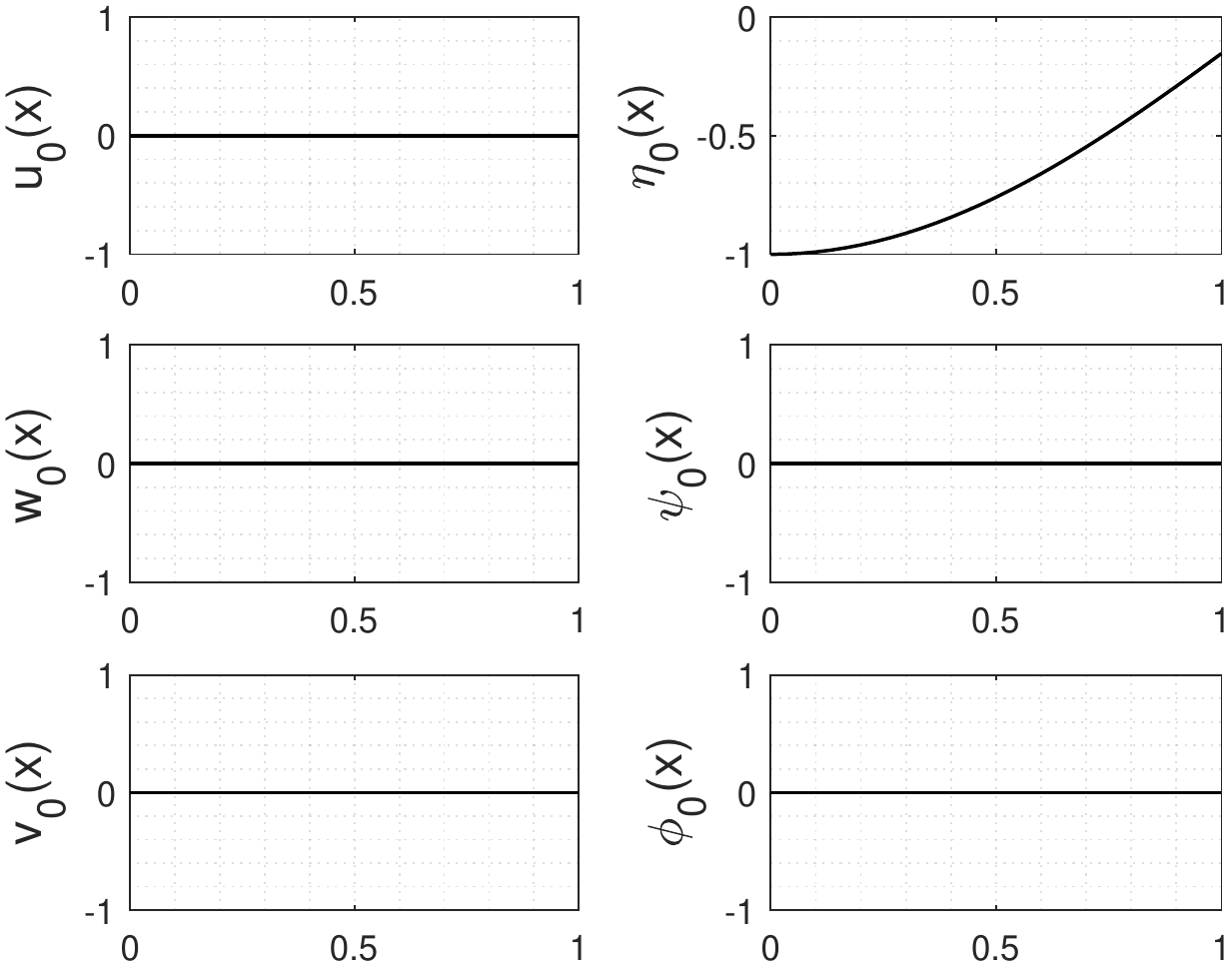}
    }
    \caption{Plot of the components of the first eigenfuction from
      $\cH_\mathrm{alt}$. Plots are obtained from a finite elements
      numerical computation and are displayed in the local coordinate
      system of the corresponding edge. All the results are based on unit materials parameters and beams lengths.}
    \label{altEigenPlots}
  \end{figure}
\end{exmp}

\subsubsection{Irrep $\omega$ of $C_3$}
To derive the description of $\cH_\omega$, we only have the conditions
\begin{equation}
  \begin{pmatrix}
    R\vec{g}_0 \\ R\vec{g}_3 \\ R\vec{g}_1 \\ R\vec{g}_2    
  \end{pmatrix}
  = \omega
  \begin{pmatrix}
    \vec{g}_0 \\ \vec{g}_1 \\ \vec{g}_2 \\ \vec{g}_3
  \end{pmatrix}
  \qquad\mbox{and}\qquad
  \begin{pmatrix}
    \eta_0 \\ \eta_3 \\ \eta_1 \\ \eta_2    
  \end{pmatrix}
  = \omega
  \begin{pmatrix}
    \eta_0 \\ \eta_1 \\ \eta_2 \\ \eta_3
  \end{pmatrix},
\end{equation}
obtained by applying $\rho(R)=\omega:=e^{2\pi i/3}$ to
\eqref{eq:intertwiner}, \eqref{eq:displacement_mapped} and
\eqref{eq:torsion_mapped}.

For beam $e_0$ we have
\begin{equation}
  \label{eq:w_beam0}
  u_0 = 0, \qquad \eta_0 = 0, \qquad
  w_0 = i v_0,
\end{equation}
while on the other beams,
\begin{align}
  \label{eq:w_legs}
  v_2 &= \omega v_1,
  &w_2 &= \omega w_1,
  &u_2 &= \omega u_1,
  &\eta_2 &= \omega \eta_1,\\
  v_3 &= \cc\omega v_1
  &w_3 &= \cc\omega w_1
  &u_3 &= \cc\omega u_1
  &\eta_3 &= \cc\omega \eta_1.
\end{align}

Displacement continuity, equation (\ref{primaryBcThmDisp}), for the
edges $e_1$ and $e_0$ has the form
\begin{equation*}
  u(\ell) \vec i_1 + w(\ell) \vec j_1 + v(\ell) \vec k_1
  = w_0(\ell_0) \vec j_0 + v_0(\ell_0) \vec k_0,
\end{equation*}
where, as before, we abbreviated
$(v,w,u,\eta) := (v_1,w_1,u_1,\eta_1)$.  Applying
$\vec j_0 = \vec j_1$ and
$\vec k_0 = \cos(\alpha) \vec i_1 -\sin(\alpha) \vec k_1$ (see
Figure~\ref{threeDimGraphExample}) and
equation~\eqref{eq:w_beam0}, the vertex condition reduces to
\begin{equation}
  \label{standardB1}
  u(\ell) - v_0(\ell_0) \cos(\alpha) = 0,
  \hspace{1cm} v(\ell) + v_0(\ell_0) \sin(\alpha) = 0,
  \hspace{1cm} w(\ell) - iv_0(\ell_0) = 0
\end{equation}
Simple calculations show that same relations as (\ref{standardB1})
holds by replacing $e_1$ with $e_2$ or $e_3$.

Rotation continuity, equation (\ref{primaryBcThmRota}), becomes
\begin{equation*}
  \eta(\ell) \vec i_1 - v'(\ell)\vec j_1 + w'(\ell) \vec k_1
  =
  -v_0'(\ell_0) \vec j_0 + w_0'(\ell_0) \vec k_0,  
\end{equation*}
similarly leading to
\begin{equation}
  \label{standardB3}
  \eta(\ell) - iv_0'(\ell_0)\cos(\alpha) = 0, \hspace{1cm}
  w'(\ell)  + iv_0'(\ell_0)\sin(\alpha) = 0, \hspace{1cm}
  v'(\ell) - v_0'(\ell_0) = 0.
\end{equation}

The vertex condition (\ref{secondBcThmDisp}) reduces to
\begin{multline}
  \label{eq:antena_balance_forces}
    cu'(\ell)(\vec i_1 + \omega \vec i_2 + \bar \omega \vec i_3)
    - bw'''(\ell)(\vec j_1 + \omega \vec j_2 + \bar \omega \vec j_3) \\
    -av'''(\ell)(\vec k_1 + \omega \vec k_2 + \bar \omega \vec k_3)
    -b_0w_0'''(\ell_0) \vec j_0 - a_0v_0'''\vec k_0= 0.
\end{multline}
Identities
\begin{equation*}
  \vec i_1 + \omega \vec i_2 + \bar \omega \vec i_3 = \frac{3}{2}
  \begin{pmatrix}
    \cos(\alpha) \\
    i\cos(\alpha)\\
    0 \\
  \end{pmatrix},\hspace{3mm}
  \vec j_1 + \omega \vec j_2 + \bar \omega \vec j_3 = \frac{3}{2}
  \begin{pmatrix}
    -i \\
    1\\
    0 \\
  \end{pmatrix},\hspace{3mm}
  \vec k_1 + \omega \vec k_2 + \bar \omega \vec k_3 = -\frac{3}{2}
  \begin{pmatrix}
    \sin(\alpha) \\
    i \sin(\alpha)\\
    0 \\
  \end{pmatrix}
\end{equation*}
and $\vec j_0 = (0~1~0)^T$, $\vec k_0 = (1~0~0)^T$, $a_0=b_0$ reduce
equation \eqref{eq:antena_balance_forces} to a single linearly
independent condition, namely
\begin{equation}
  \label{standardB2}
  \frac{3}{2}cu'(\ell)\cos(\alpha) + \frac{3}{2}biw'''(\ell)
  + \frac{3}{2}av'''(\ell)\sin(\alpha) - a_0v_0'''(\ell_0) = 0.
\end{equation}

The vertex condition (\ref{secondBcThmRota}) reduces to
\begin{align*}
d\eta'(\ell)(\vec i_1 + \omega \vec i_2 + \bar \omega \vec i_3) 
&-av''(\ell)(\vec j_1 + \omega \vec j_2 + \bar \omega \vec j_3) \\
&+bw''(\ell)(\vec k_1 + \omega \vec k_2 + \bar \omega \vec k_3) 
-a_0v_0''(\ell_0)\vec j_0 + b_0w_0''(\ell_0) \vec k_0 = 0,
\end{align*}
which is equivalent to single equation
\begin{equation}
  \label{standardB4}
  \frac{3}{2}d i\eta'(\ell) \cos(\alpha)
  - \frac{3}{2}biw''(\ell)\sin(\alpha)
  - \frac{3}{2}av''(\ell)- a_0v_0''(\ell_0) = 0.
\end{equation}

With the restrictions given by \eqref{eq:w_beam0} and
\eqref{eq:w_legs}, the eigenvalue problem is
fully determined by functions $v, w, u$ and $\eta$ on the base edges
and the function $v_0$ defined on vertical edge. Solving the corresponding differential quations and imposing free vertex condition at $\vt_0$ and clamped vertex conditions at $\vt_1$, $\vt_2$, and $\vt_3$, the general solutions can be written as
\begin{align*}
    &v(x) = A \big(\sinh(\mu x) - \sin(\mu x)\big)
    + B \big(\cosh(\mu x) - \cos(\mu x)\big) \\
    &iw(x) = C \big(\sinh(\mu x) - \sin(\mu x)\big)
    + D \big(\cosh(\mu x) - \cos(\mu x)\big) \\
    &u(x) = E \sin(\beta x), \\
    &i\eta(x) = F \sin(\beta x)\\
    &v_0(x) = A_0 \big(\sinh(\mu_0 x) + \sin(\mu_0 x)\big)
    + B_0 \big(\cosh(\mu_0 x) + \cos(\mu_0 x)\big) 
\end{align*}
where $\mu = \lambda^{1/4}$, $\beta = \lambda^{1/2}$, $\mu_0 = (\lambda/a_0)^{1/4}$, and $\beta_0 = (\lambda/a_0)^{1/2}$.   

Applying vertex conditions
(\ref{standardB1}), (\ref{standardB3}), (\ref{standardB2}), and
(\ref{standardB4}), we reduce the eigenvalue problem to 
an equation of the form $\det(M_\omega(\lambda)) = 0$.  If we make additional simplifying assumptions that $a_0=a=b=c=1$ the matrix $M_\omega(\lambda)$ becomes 
\[
  {M_\omega(\lambda) =
    \begin{pmatrix}
      C_{\mu \ell}^{-} & S_{\mu \ell}^{-} & 0 & 0 & 0 & 0 & -C_{\mu \ell_0}^{+} & -S_{\mu \ell_0}^{-} \\
      0 & 0 & S_{\mu \ell}^{-} & C_{\mu \ell}^{-} & 0 & 0 & S_{\mu \ell_0}^{+} & C_{\mu \ell_0}^{+} \\
      0 & 0 & 0 & 0 & S_{\beta \ell} & 0 & -C_{\alpha}S_{\mu \ell_0}^{+} & -C_{\alpha}C_{\mu \ell_0}^{+} \\
      0 & 0 & 0 & 0 & 0 & S_{\beta \ell} & \mu C_{\alpha}C_{\mu \ell_0}^{+} & \mu C_{\alpha}S_{\mu \ell_0}^{-} \\
      S_{\mu \ell}^{-} & C_{\mu \ell}^{-} & 0 & 0 & 0 & 0 & S_{\alpha}S_{\mu \ell_0}^{+} & S_{\alpha}C_{\mu \ell_0}^{+} \\
      0 & 0 & C_{\mu \ell}^{-} & S_{\mu \ell}^{+} & 0 & 0 & -S_{\alpha}C_{\mu \ell_0}^{+} & -S_{\alpha}S_{\mu \ell_0}^{-} \\
      \frac{3\mu}{2}S_{\alpha}C_{\mu \ell}^{+} & \frac{3\mu}{2}S_{\alpha}S_{\mu \ell}^{-} & \frac{3\mu}{2}C_{\mu \ell}^{+} & \frac{3\mu}{2}S_{\mu \ell}^{-} & \frac{3}{2}C_{\alpha}C_{\beta \ell} & 0 & -\mu C_{\mu \ell_0}^{-}  & -\mu S_{\mu \ell_0}^{+} \\
      -\frac{3}{2}S_{\mu \ell}^{+} & -\frac{3}{2}C_{\mu \ell}^{+} & -\frac{3}{2}S_{\alpha}S_{\mu \ell}^{+} & -\frac{3}{2}S_{\alpha}C_{\mu \ell}^{+} & 0 & \frac{3}{2}C_{\alpha}C_{\beta \ell} & -S_{\mu \ell_0}^{-} & -C_{\mu \ell_0}^{-}
    \end{pmatrix},
  }
\]
\begin{exmp}
  Continuing Example~\ref{ex:antenna_numerical},
  Figure~\ref{detVariation}(right) plots the $\det(M_\omega)$ with the
  roots highlighted.  Figure \ref{stdEigenPlots} shows the components
  of the \emph{real part} of the eigenfunction corresponding to the
  first eigenvalue of $\omega$ representation of $C_3$.  The real part
  is an eigenfunction of the original problem because if $\Psi$
  is an eigenfunction in $\cH_\omega$, then $\cc\Psi$ is an
  eigenfunction in $\cH_{\cc\omega}$ with the same eigenvalue.
  
  \begin{figure}[htbp]
    \center
    \subfigure{
      \includegraphics[width=0.45\textwidth]{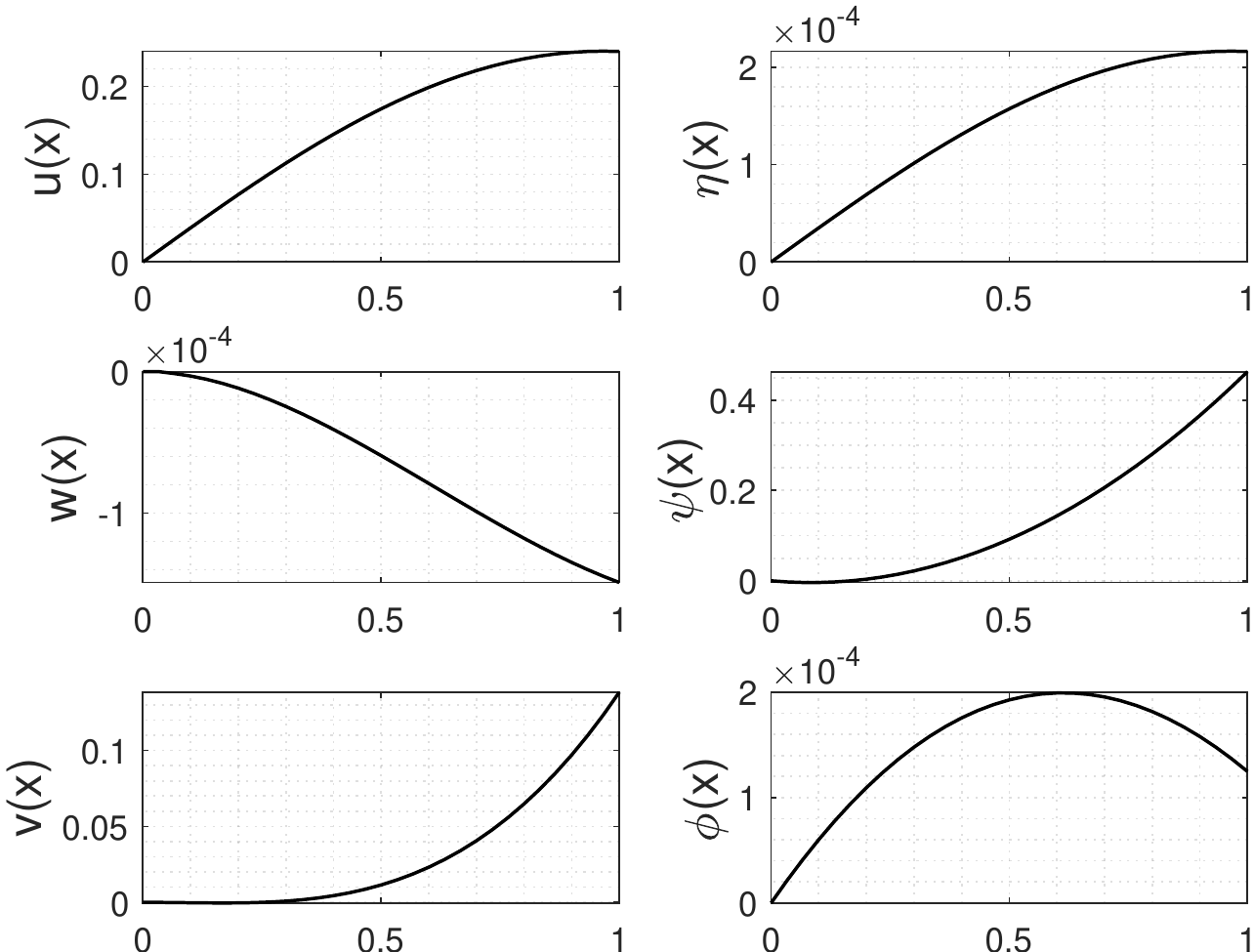}
    }
    \hspace{5mm}
    \subfigure{
      \includegraphics[width=0.45\textwidth]{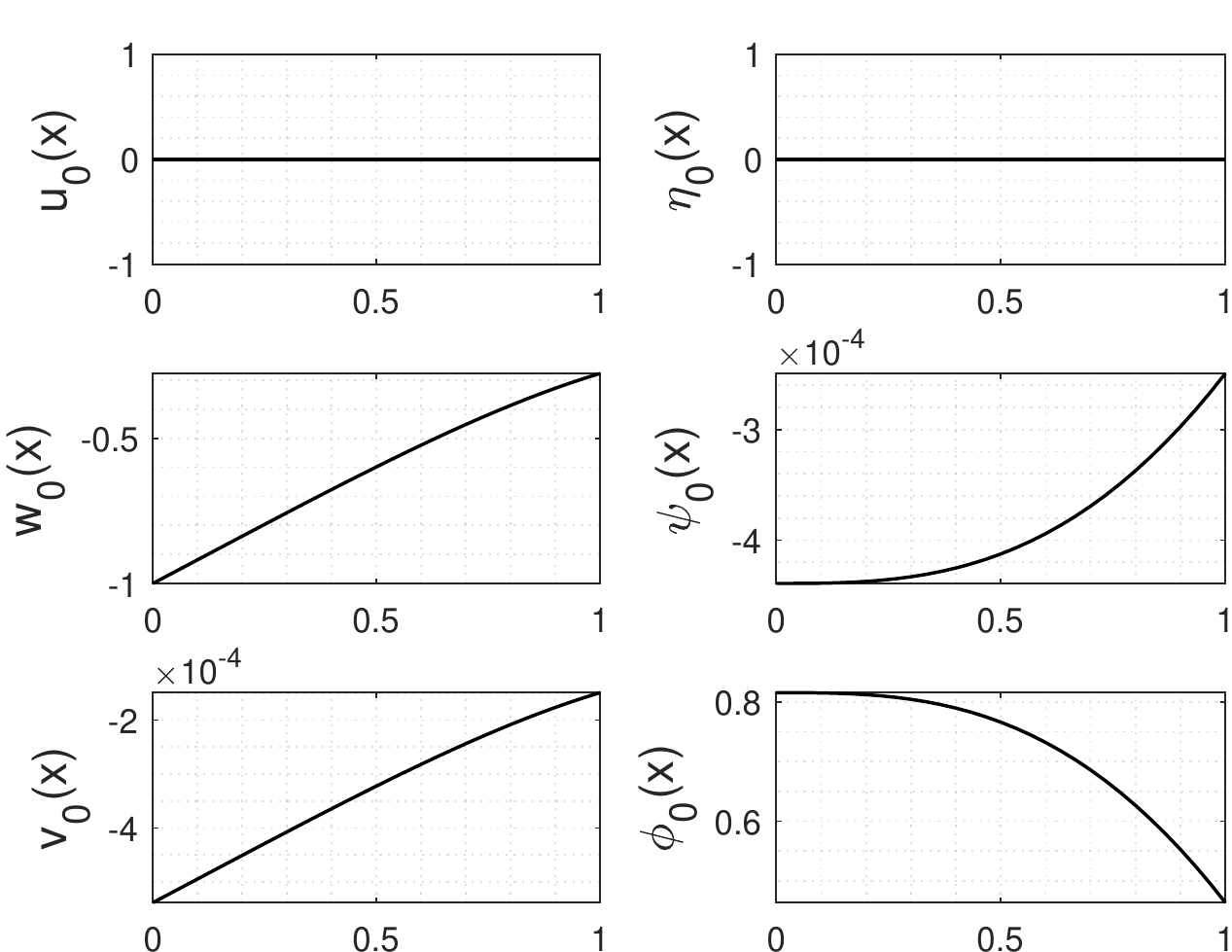}
    }
    \caption{Plot of (first) eigenfunction fields corresponding to the eigenvalue of multiplicity two in edge's local coordinate system by finite element approximation. All the results are based on unit value of materials properties and unit beam's lengths.}
    \label{stdEigenPlots}
  \end{figure}

  Additionally, we display in Figure~\ref{f_ResultsNumWindows} a
  visualization of the two independent eigenfunctions from $\cH_\omega
  \oplus \cH_{\cc\omega}$.
  
  \begin{figure}[htbp]
    \center
    \subfigure{
      \includegraphics[width=0.45\textwidth]{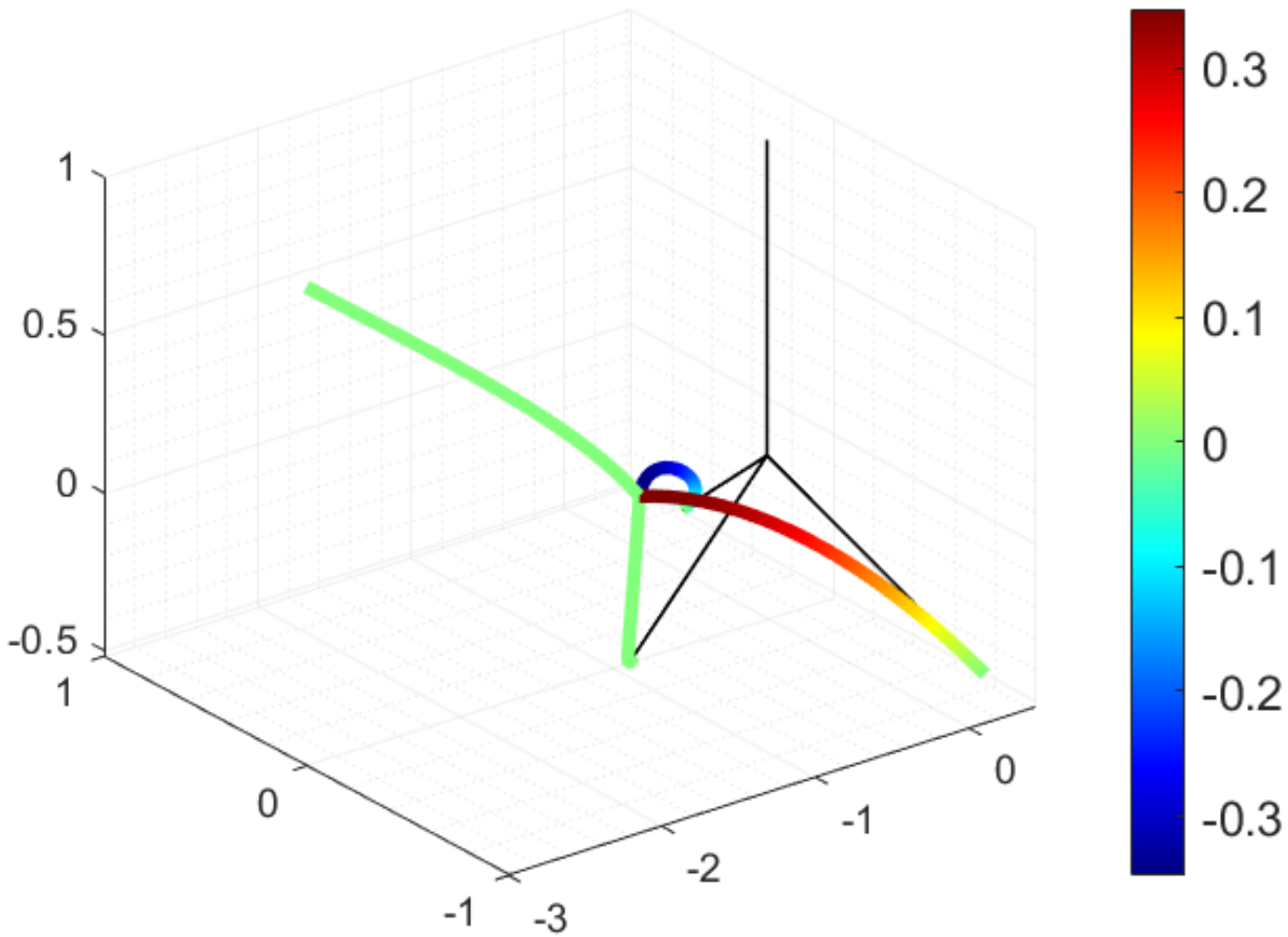}
    }
    \hspace{1mm}
    \subfigure{
      \includegraphics[width=0.45\textwidth]{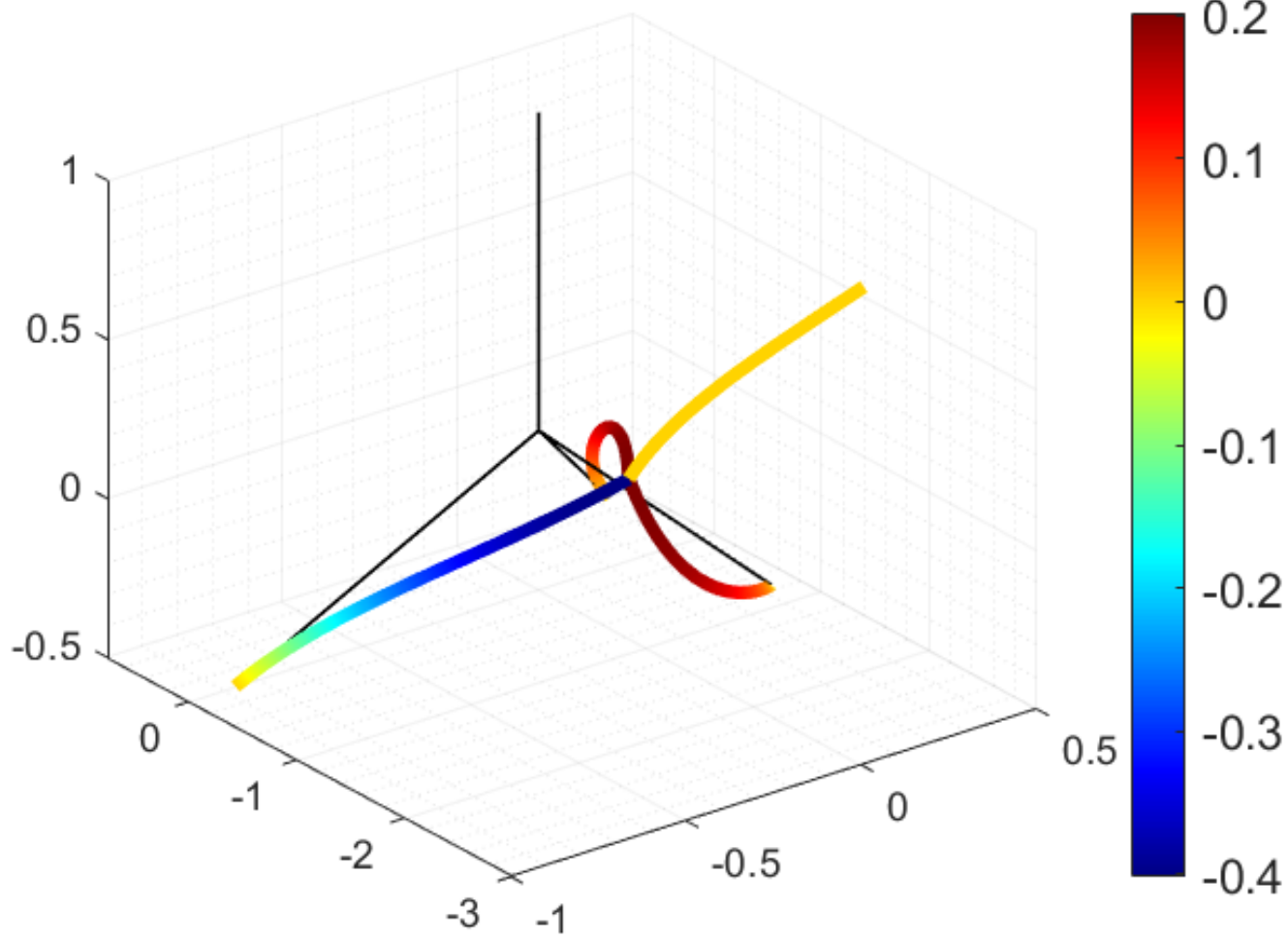}
    }
    \caption{Plot of the first and
      second eigenfunctions corresponding to the standard
      representation in global coordinate system. The results are obtained by a finite element
      approximation of the entire structure with 20 number of element for each beam. The angle $\alpha = \pi/6$, and unit values for all material's parameters and beam's lengths are selected. The  eigenvalues agree up to order $10^{-8}$ due to finite 
      number of discretization elements.}
    \label{f_ResultsNumWindows}
  \end{figure}
\end{exmp}

\section{Outlook}
\label{sec:outlook}

In this section we give a partial list of topics that would benefit
from the spectral-theoretic approach.  The most obvious is to extend
our results to Timoshenko beams. Timoshenko model no longer assumes
the cross-sections remain orthogonal to the deformed axis and
therefore incorporates more degrees of freedom
\cite{Lagnese_multilink,MPZ02,GR15,Mei19}.  

Of similar applied interest is the analysis of pre-curved beams which
play an important role in application of highly flexible thin
structures.  Such beams may exhibit unusual and unexpected behavior,
and their spectral analysis does not seem to be explored fully.  Along
this applied path, extension of the current framework to the case of
semi-rigid joints, as well as the case of concentrated masses carried
by them may be interesting to the community (see, for example,
\cite{LLS93,CZ98,CZ00,GA13,TSOGM20,GM20} and references therein).

The viability of the frame model as a structure composed of
one-dimensional segments also needs to be verified mathematically, as
a limit of a three-dimensional structure as the beam widths are going
to zero.  This approach will also be of interest to structural
engineers working with FEM-packages for 3-D structural elements such
as Kichhoff--Love plates.  There is a significant mathematical
literature on this question for second-order operators (see, for
example, \cite{KucZen_jmaa01,Z02,Gri_incol08,Post_book12}), with some
recent advances for the fourth-order operators \cite{D89, LNS19,
  LNST19}.  Of particular interest may be the approach via general
scales of quadratic forms \cite{MNP13,PS19} which can be more readily
adapted to the present setting.  A variety of operators may be
expected to arise in the limit, including the operators with masses
concentrating at joints and other cases of applied interest.

Going in the other direction, it is interesting to investigate the
limit of the operator we discussed when some material parameters are
either very small or very large.  For example, consider taking the
angular displacement stiffness $d$ to 0 or to $\infty$ in the planar
case.  In both limits the out-of-plane displacement $v$ can be
expected to decouple from the rest of the degrees of freedom, but
with different vertex conditions at the joint (see \cite{GM20}
for a classification of self-adjoint conditions applicable to this
case).

One of the methods to ease the computation of eigenvalues for beam
frames is the Williams--Wittrick method
\cite{WilWit_ijms70,WitWil_qjmam71}, which, in mathematical terms,
uses a finite-rank perturbation to obtain a simpler system (decoupled
beams) whose eigenvalues bound the eigenvalues of the original frame.
Extending similar ``graph surgery'' methods summarized in
\cite{BerKenKurMug_tams19} to the present setting would result in
improved spectral estimates and answers to natural spectral
optimization questions.

Many of the real-life beam structures are periodic; for those,
Floquet--Bloch analysis \cite{Kuc_bams16,ZP16,KM21} is a natural tool to understand the spectrum and to eventually design structures that block certain ranges of frequencies. Along the same line, spectral analysis of a fourth-order operator on periodic hexagonal lattices was recently discussed in \cite{EH21}.

Finally, the positivity of the ground state is a feature that is
almost taken for granted in studies of the Laplacian and which is
broken in the example shown in Figure~\ref{dispEigenfunctionPlanar1}.
That forth-order operators may have non-positive ground states is a
well-know fact \cite{S01}.  The positivity is restored when the
angular displacement stiffness $d$ is large
(Figure~\ref{dispEigenfunctionPlanar2}), amplifying the second order
part of the operator compared to the fourth order, see
\eqref{eq:operatorH_planar_inplane}.  A careful extension of the
second-order theory on graphs \cite{Kur_lmp19} to the present case is
thus sorely needed.

\section*{Acknowledgment}

This work was partially supported by the NSF grant DMS-1815075 and BSF
grant No.~2016281.  The authors gratefully acknowledge enlightening
discussions with and improving suggestions by Delio Mugnolo and Maryam
Shakiba.  Numerous insightful suggestions by the referees helped us
improve the manuscript and give a more complete picture of the
existing literature.

\appendix

\section{Rodrigues' rotation formula.}
\label{sec:Rodrigues}
Starting with the fact that $\cR \in \mathrm{SO}(3)$, then $\cR$ in \eqref{expRota} can be fully characterized by three free parameters, namely, axis of rotation represented by unit vector $\vec \vartheta(x)$ and angle of rotation $\alpha \in [0,\pi]$. Let define skew-symmetric matrix $\cK$ with components
\begin{equation}
\label{KmatrixLeviCivita}
\cK_{pq} = -\sum_{r=1}^{3}\epsilon_{pqr} \vartheta_r,
\qquad p,q \in \{1,2,3\},
\end{equation}
where $\vartheta_r$ is the $r$-the coordinate of
the vector $\vec \vartheta(x)$ in the global basis
$\{\vec E_1, \vec E_2, \vec E_3\}$, and $\epsilon_{pqr}$ is the
Levi-Civita alternating symbol defined as the
scalar triple product of orthonormal vectors in a right-handed system
$\{\vec e_1, \vec e_2, \vec e_3\}$ as
$\epsilon_{pqr} = \vec e_p \cdot (\vec e_q \times \vec e_r)$. The operator $\cR$ corresponding to $\vec \vartheta$ and $\alpha$ then has a form 
\begin{align}
\label{rotatMatrix}
\cR(x) = \bm{\mathbb{I}} + (\sin \alpha) \cK(x) + (1 - \cos \alpha) \cK^2(x)
\end{align}
with $\bm{\mathbb{I}}$ is the $3 \times 3$ identity matrix. Application of $\cR$ in \eqref{rotatMatrix} on vector $\vec a \in \bR^3$ will transform the vector to $\bm{\vec a}$ of the form 
\begin{align}
\label{rotVecActionGen}
\bm{\vec a} = \vec a \cos \alpha + \big(\vec \vartheta \times \vec a \big) \sin \alpha +
\vec \vartheta \big(\vec \vartheta \cdot \vec a\big) (1 - \cos \alpha)
\end{align}
which is the so-called Euler-Rodrigues formula. Replacing $\vec a$ with $\vec i$ in (\ref{rotVecActionGen}), then 
\begin{align}
\label{rotVecAction}
\bm{\vec i}(x) = \vec i \cos \alpha + \big(\vec \vartheta(x) \times \vec i \big) \sin \alpha +
\vec \vartheta(x) \big(\vec \vartheta(x) \cdot \vec i\big) (1 - \cos \alpha)
\end{align}
For small deformation, let introduce (linearized)-rotation vector
\begin{equation}
	\vec \omega(x) = \alpha \vec \vartheta(x)
\end{equation}
Then the leading terms of \eqref{rotVecAction}, i.e., by neglecting terms of order $\cO(\alpha^2)$, recovers \eqref{eq:iRot}. Similar discussion holds on recovering the leading terms in \eqref{eq:jRot} and \eqref{eq:kRot}. Following Lemma establishes connection of \eqref{rotatMatrix} with the exponential form in \eqref{expRota}. 
\begin{lem}
	\label{lem:rotationExp}
	Rotational matrix $\cR$ in (\ref{rotatMatrix}) with $\cK$ as (\ref{KmatrixLeviCivita}) can be represented in exponential form $\cR = \exp(\Omega)$, see \eqref{expRota}, with $\Omega = \alpha \cK$. 
\end{lem}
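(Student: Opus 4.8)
The plan is to prove the identity by expanding the matrix exponential $\exp(\alpha\cK)$ as a power series and showing that it collapses to the closed form \eqref{rotatMatrix}. The single algebraic fact that drives the whole computation is that $\cK$, being the matrix of the cross product $\vec a \mapsto \vec\vartheta \times \vec a$ with a \emph{unit} axis vector $\vec\vartheta$, satisfies $\cK^3 = -\cK$. Everything else is bookkeeping against the Taylor series of sine and cosine.

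First I would establish the relation $\cK^3 = -\cK$. From the definition \eqref{KmatrixLeviCivita} one checks that $\cK\vec a = \vec\vartheta\times\vec a$, hence $\cK^2\vec a = \vec\vartheta\times(\vec\vartheta\times\vec a) = \vec\vartheta\,(\vec\vartheta\cdot\vec a) - |\vec\vartheta|^2\vec a$, so that $\cK^2 = \vec\vartheta\,\vec\vartheta^{\,T} - \bm{\mathbb{I}}$ upon using $|\vec\vartheta|=1$. Applying $\cK$ once more and using $\cK\vec\vartheta = \vec\vartheta\times\vec\vartheta = \vec 0$ gives $\cK^3 = \cK(\vec\vartheta\,\vec\vartheta^{\,T}) - \cK = -\cK$. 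Next I would feed this into the series: writing $\exp(\alpha\cK) = \sum_{n\ge 0}\frac{\alpha^n}{n!}\cK^n$ and applying $\cK^3=-\cK$ repeatedly, the odd powers cycle between $\cK$ and $-\cK$, while the even powers with $n\ge 2$ cycle between $\cK^2$ and $-\cK^2$. Isolating the $n=0$ term and regrouping the rest gives
\begin{align*}
  \exp(\alpha\cK)
  &= \bm{\mathbb{I}}
  + \Big(\alpha - \tfrac{\alpha^3}{3!} + \tfrac{\alpha^5}{5!} - \cdots\Big)\cK
  + \Big(\tfrac{\alpha^2}{2!} - \tfrac{\alpha^4}{4!} + \cdots\Big)\cK^2 \\
  &= \bm{\mathbb{I}} + (\sin\alpha)\,\cK + (1-\cos\alpha)\,\cK^2,
\end{align*}
which is exactly \eqref{rotatMatrix}. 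Finally I would note that $\Omega := \alpha\cK$ acts as $\vec a \mapsto \alpha(\vec\vartheta\times\vec a) = \vec\omega\times\vec a$ with $\vec\omega = \alpha\vec\vartheta$, so $\Omega$ is precisely the skew-symmetric generator appearing in Lemma~\ref{lem:RotationVector}, confirming the identification $\cR = \exp(\Omega)$ of \eqref{expRota}.

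I do not expect any genuine obstacle here: the argument is a standard closed-form evaluation of $\exp(\alpha\cK)$. The only point requiring care is that the cyclic relation $\cK^3 = -\cK$ relies on $\vec\vartheta$ being a \emph{unit} vector, which is guaranteed by its definition as the rotation axis; if one worked instead with a general $\vec\omega = \alpha\vec\vartheta$ one would have $\Omega^3 = -\alpha^2\,\Omega$ and the series would reproduce the sine/cosine of $\alpha=|\vec\omega|$ rather than of the raw parameter, so keeping the normalization explicit is what makes the grouping into $\sin\alpha$ and $1-\cos\alpha$ exact.
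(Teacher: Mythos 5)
Your proof is correct and follows essentially the same route as the paper's: both rest on the key relation $\cK^3 = -\cK$ (which you derive coordinate-free via the triple cross product, while the paper verifies it from the explicit $3\times 3$ matrix) and then match the regrouped exponential series against the Taylor series of $\sin\alpha$ and $1-\cos\alpha$. The paper merely runs the identical computation in the opposite direction, expanding the Rodrigues form into a series and recognizing $\exp(\alpha\cK)$, so there is no substantive difference.
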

\begin{proof}[\normalfont \textbf{Proof of Lemma~\ref{lem:rotationExp}}] 
	Simple calculation on the components in (\ref{KmatrixLeviCivita}) implies that matrix $\cK$ has the form 
	\begin{align}
	\label{cKdefn}
	\cK = 
	\begin{pmatrix}
	\hspace{2mm}0& -\vartheta_3&\hspace{3mm}\vartheta_2\\
	\hspace{3mm}\vartheta_3 &\hspace{2mm}0&-\vartheta_1\\
	-\vartheta_2 & \hspace{3mm}\vartheta_1& \hspace{2mm}0
	\end{pmatrix}
	\end{align}
	Applying the property that $\vartheta$ is unit vector along with simple calculations confirms equality $\cK^3 = -\cK$. This in sufficient to derive higher powers of $\cK$, e.g. observe that $\cK^4 = \cK\cK^3 = -\cK^2$ and so on. To proceed, let $\kappa := (+1,-1,+1,\cdots)$ be an alternating sign sequence, then for $i = 1,2,\ldots$ 
	\begin{align}
	\label{characKi}
	\cK^{2i-1} = \kappa_i \cK, \quad \text{ and } \quad \cK^{2i} = \kappa_i \cK^2
	\end{align}
	Expanding $\sin \alpha$ and $\cos \alpha$ using Taylor series, then (\ref{rotatMatrix}) turns to
	\begin{align}
	\label{Rexpanded}
	\cR = \bm{\mathbb{I}}  + (\alpha - \frac{\alpha^3}{3!} + \cdots)\cK + (\frac{\alpha^2}{2!} - \frac{\alpha^4}{4!} + \cdots) \cK^2 = 
	\bm{\mathbb{I}} + \alpha \cK + \frac{\alpha^2}{2!}\cK^2 - \frac{\alpha^3}{3!}\cK - \frac{\alpha^4}{4!} \cK^2 + \cdots
	\end{align}
	Finally, replacing $\cK$ and $\cK^2$ by characterizations in terms of $\cK^{2i-1}$ and $\cK^{2i}$ stated in (\ref{characKi}) in (\ref{Rexpanded}), then 
	\begin{align}
	\cR = \bm{\mathbb{I}} + \alpha \cK + \frac{\alpha^2}{2!}\cK^2 + \frac{\alpha^3}{3!} \underbrace{(-\cK)}_{\cK^3} + \frac{\alpha^4}{4!} \underbrace{(-\cK^2)}_{\cK^4} + \cdots = \sum_{n \geq 0} \frac{(\alpha \cK)^n}{n!} = \exp(\alpha \cK)
	\end{align}
	The proof is complete by setting $\Omega = \alpha \cK$. 
\end{proof}

\section{Tangent plane for deformed planar frame}
\label{sec:tangent_plane}
  
\begin{proof}[\normalfont \textbf{Proof of Lemma~\ref{equivConditions}}] 
	Following remark \ref{degTwoTrivial}, we assume degree of internal vertex satisfies $n \ge 3$. We first show the claim on equivalency of \eqref{rigidPlane_1} and the condition that all vectors $\{\bm{\vec i}_e(\vt)\}_{e \sim \vt}$ lie in the same plane, i.e. $\left( \bm{\vec i}_1(\vt) \times \bm{\vec i}_2(\vt) \right) \cdot \bm{\vec i}_e(\vt) = 0$ for all $e \sim \vt$. Setting $w'_e=0$ in the representation of $\bm{\vec i}_e(\vt)$ in \eqref{eq:i_from_g}, then 
	\begin{equation}
	\begin{split}
	\left( \bm{\vec i}_1(\vt) \times \bm{\vec i}_2(\vt) \right) \cdot \bm{\vec i}_e(\vt) = \left((\vec i_1 + v_1' \vec k) \times (\vec i_2 + v_2' \vec k) \right) \cdot (\vec i_e + v_e' \vec k)
	\end{split}
	\end{equation}
	Applying properties $(\vec i_1 \times \vec i_2)\cdot \vec i_e = 0$, $(\vec i_1 \times \vec k)\cdot \vec k = 0$, and $(\vec k \times \vec i_2)\cdot \vec k = 0$ implies that 
	\begin{equation}
	\begin{split}
	\left( \bm{\vec i}_1(\vt) \times \bm{\vec i}_2(\vt) \right) \cdot \bm{\vec i}_e(\vt) = v_1' \underbrace{(\vec k \times \vec i_2)}_{\vec j_2} \cdot \vec i_e +
	v_2'\underbrace{(\vec i_1 \times \vec k) \cdot \vec i_e}_{-\vec j_1 \cdot \vec i_e ~ = ~\vec j_e \cdot \vec i_1} + v_e'\underbrace{(\vec i_1 \times \vec i_2) \cdot \vec k}_{\vec i_2 \cdot (\vec k \times \vec i_1) ~=~ \vec i_2 \cdot \vec j_1}
 	\end{split}
	\end{equation}
	which is condition \eqref{rigidPlane_1} as it has been claimed. Next we process with the proof of the Lemma.  
	Firstly, in order to show $\eqref{primaryVertexCond2H1} \Rightarrow \eqref{rigidPlane_1}$, observe that from \eqref{primaryVertexCond2H1} and for $e = 1,2,\ldots,n$ adjacent to the internal vertex
	\begin{align*}
	\vec j_e \cdot (\eta_1 \vec i_1  - v_1' \vec j_1) = \vec j_e \cdot (\eta_e \vec i_e - v_e' \vec i_e) \quad  \Rightarrow \quad  (\vec j_e \cdot \vec j_1) v_1' -(\vec j_e \cdot \vec i_1) \eta_1 - v_e' = 0
	\end{align*}
	This then implies that the following linear system  
	\begin{align*}
	\begin{pmatrix}
	1 &0 &v_1'\\
	\vec j_2 \cdot \vec j_1 & \vec j_2 \cdot \vec i_1 &v_2'\\
	\vec j_e \cdot \vec j_1 & \vec j_e \cdot \vec i_1 &v_e'
	\end{pmatrix}
	\begin{pmatrix}
	+v_1'\\
	-\eta_1\\
	-1
	\end{pmatrix}
	= 
	\begin{pmatrix}
	0\\
	0\\
	0
	\end{pmatrix}
	\end{align*}
	holds. This implies that the multiplication matrix is rank deficient and thereby 
	\begin{align}
	(\vec j_2 \cdot \vec i_1)(\vec j_e \cdot \vec j_1) v_1' - (\vec j_2 \cdot \vec j_1)(\vec j_e \cdot \vec i_1)v_1' +  (\vec j_e \cdot \vec i_1) v_2'  + (\vec j_1 \cdot \vec i_2) v_e' = 0
	\end{align}   
	Above we use the relation $\vec j_2 \cdot \vec i_1 = - \vec j_1 \cdot \vec i_2$. It remains to show that $v_1'$ has the right multiplication coefficient, namely $\vec j_2 \cdot \vec i_s$. But
	\begin{align*}
	(\vec j_2 \cdot \vec i_1)\underbrace{(\vec j_e \cdot \vec j_1)}_{(\vec i_e \cdot \vec i_1)} - (\vec j_2 \cdot \vec j_1)\underbrace{(\vec j_e \cdot \vec i_1)}_{-(\vec i_e \cdot \vec j_1)} = \vec j_2 \cdot \big(\underbrace{(\vec i_e \cdot \vec i_1) \vec i_1 + (\vec i_e \cdot \vec j_1) \vec j_1}_{\vec i_e} \big) = \vec j_2 \cdot \vec i_e
	\end{align*}
	which proves (\ref{rigidPlane_1}). Secondly, to show that $(\ref{primaryVertexCond2H1}) \Rightarrow (\ref{rigidPlane_2})$, notice
	\begin{equation}
	\begin{split}
	\vec j_1 \cdot (\eta_1 \vec i_1  - v_1' \vec j_1) = \vec j_1 \cdot (\eta_e \vec i_e - v_e' \vec i_e) \quad  \Rightarrow \quad  (\vec j_1 \cdot \vec j_e) v_e' - (\vec j_1 \cdot \vec i_e) \eta_e - v_1'  &= 0 \\
	\vec j_2 \cdot (\eta_2 \vec i_2  - v_2' \vec j_2) = \vec j_2 \cdot (\eta_e \vec i_e - v_e' \vec j_e) \quad  \Rightarrow \quad  (\vec j_2 \cdot \vec j_e) v_e' - (\vec j_2 \cdot \vec i_e) \eta_e - v_2'  &= 0 
	\end{split}
	\end{equation}
	Similar argument on rank deficiency of coefficient matrix shows that 
	\begin{align*}
	(\vec j_2 \cdot \vec j_e) v_1' - (\vec j_e \cdot \vec j_1)v_2' +  (\vec j_1 \cdot \vec i_e)(\vec j_2 \cdot \vec j_e)\eta_e - (\vec j_1 \cdot \vec j_e)(\vec j_2 \cdot \vec i_e) \eta_e = 0 
	\end{align*}
	holds. But
	\begin{align}
	(\vec j_1 \cdot \vec i_e)(\vec j_2 \cdot \vec j_e) - (\vec j_1 \cdot \vec j_e)(\vec j_2 \cdot \vec i_e) = \vec j_1 \cdot \big((\vec i_2 \cdot \vec i_e) \vec i_e + (\vec i_2 \cdot \vec j_e) \vec j_e \big) = \vec j_1 \cdot \vec i_2
	\end{align}
	Compare with (\ref{rigidPlane_2}), this finish the desired result.
	Finally, it remains to show the reverse equality, i.e. $(\ref{rigidPlane_1}) ~\&~ (\ref{rigidPlane_2}) \Rightarrow (\ref{primaryVertexCond2H1})$.  Starting with (\ref{rigidPlane_1})
	\begin{equation}
	\label{vderivRe}
	\begin{split}
	(\vec j_1 \cdot \vec i_2) v_e' = -(\vec j_2 \cdot \vec i_e) v_1' - (\vec j_e \cdot \vec i_1) v_2' = 
	\vec j_e \cdot (v_1' \vec i_2 - v_2' \vec i_1)
	\end{split}
	\end{equation}
	Similarly from (\ref{rigidPlane_2}), 
	\begin{equation}
	\label{torsionRe}
	\begin{split}
	(\vec j_1 \cdot \vec i_2) \eta_e = -(\vec j_2 \cdot \vec j_e) v_1' + (\vec j_e \cdot \vec j_1) v_2' = 
	\vec i_e \cdot (v_2' \vec i_1 - v_1' \vec i_2)
	\end{split}
	\end{equation}
	This then implies that (assuming $\vec j_1 \cdot \vec i_2 \not = 0$)
	\begin{align}
	\label{vecM}
	\eta_e \vec i_e - v_e' \vec j_e =\frac{v_2'}{(\vec j_1 \cdot \vec i_2)} \vec i_1 - \frac{v_1'}{(\vec j_1 \cdot \vec i_2)} \vec i_2
	\end{align}
	holds for all $e \sim \vt$. Since the right hand side of (\ref{vecM}) is independent of $e$, then the result in (\ref{primaryVertexCond2H1}) is clear.
\end{proof}

\bibliographystyle{abbrv}
\bibliography{additional_beams,bk_bibl}

\def\cprime{$'$} \def\cprime{$'$} \def\cprime{$'$} \def\cprime{$'$}
  \def\cprime{$'$} \def\cprime{$'$} \def\cprime{$'$}
  \def\polhk#1{\setbox0=\hbox{#1}{\ooalign{\hidewidth
  \lower1.5ex\hbox{`}\hidewidth\crcr\unhbox0}}} \def\cprime{$'$}
  \def\cprime{$'$}
\begin{thebibliography}{10}

\bibitem{AB17}
M.~Arioli and M.~Benzi.
\newblock A finite element method for quantum graphs.
\newblock {\em IMA Journal of Numerical Analysis}, 38:1119–1163, 2017.

\bibitem{BanBerJoyLiu_prep17}
R.~Band, G.~Berkolaiko, C.~H. Joyner, and W.~Liu.
\newblock Quotients of finite-dimensional operators by symmetry
  representations.
\newblock preprint {\tt arXiv:1711.00918}, 2017.

\bibitem{Banerjee19}
J.~R. Banerjee.
\newblock Review of the dynamic stiffness method for free-vibration analysis of
  beams.
\newblock {\em Transportation Safety and Environment}, 1:106--116, 2019.

\bibitem{BerKenKurMug_tams19}
G.~Berkolaiko, J.~B. Kennedy, P.~Kurasov, and D.~Mugnolo.
\newblock Surgery principles for the spectral analysis of quantum graphs.
\newblock {\em Trans. Amer. Math. Soc.}, 372(7):5153--5197, 2019.

\bibitem{BerKuc_graphs}
G.~Berkolaiko and P.~Kuchment.
\newblock {\em Introduction to Quantum Graphs}, volume 186 of {\em Mathematical
  Surveys and Monographs}.
\newblock AMS, 2013.

\bibitem{BerLatSuk_am19}
G.~Berkolaiko, Y.~Latushkin, and S.~Sukhtaiev.
\newblock Limits of quantum graph operators with shrinking edges.
\newblock {\em Adv. Math.}, 352:632--669, 2019.

\bibitem{BL04}
A.~V. Borovskikh and K.~P. Lazarev.
\newblock Fourth-order differential equations on geometric graphs.
\newblock {\em Journal of Mathematical Sciences}, 119:719--738, 2004.

\bibitem{Burenkov_sobolev}
V.~I. Burenkov.
\newblock {\em Sobolev spaces on domains}, volume 137 of {\em Teubner Texts in
  Mathematics}.
\newblock B. G. Teubner, Stuttgart, 1998.

\bibitem{CZ98}
C.~Castro and E.~Zuazua.
\newblock Boundary controllability of a hybrid system consisting in two
  flexible beams connected by a point mass.
\newblock {\em SIAM Journal on Control and Optimization}, 36:1576–1595, 1998.

\bibitem{CZ00}
C.~Castro and E.~Zuazua.
\newblock Exact boundary controllability of two {E}uler--{B}ernoulli beams
  connected by a point mass.
\newblock {\em Mathematical and Computer Modeling}, 32:955--969, 2000.

\bibitem{ChaWil_sv00}
K.~L. Chan and F.~W. Williams.
\newblock Orthogonality of modes of structures when using the exact
  transcendental stiffness matrix method.
\newblock {\em Shock and Vibration}, 7:653919, 2000.

\bibitem{DN00}
B.~Dekoninck and S.~Nicaise.
\newblock The eigenvalue problem for networks of beams.
\newblock {\em Linear Algebra and its Applications}, 314(1-3):165--189, 2000.

\bibitem{D89}
H.~L. Dret.
\newblock Modeling of the junction between two rods.
\newblock {\em Journal de Math{\'e}matiques Pures et Appliqu{\'e}es},
  68:365--397, 1989.

\bibitem{EH21}
M.~Ettehad and B.~Hat\.{i}no\u{g}lu.
\newblock On the spectra of periodic elastic beam lattices: single layer graph.
\newblock preprint {\tt arXiv:2110.05466}, 2021.

\bibitem{FultonHarris_RT}
W.~Fulton and J.~Harris.
\newblock {\em Representation theory}, volume 129 of {\em Graduate Texts in
  Mathematics}.
\newblock Springer-Verlag, New York, 1991.

\bibitem{GR15}
M.~Geradin and D.~J.Rixen.
\newblock Mechanical vibrations: theory and application to structural dynamics.
\newblock {\em Wiley; 3rd edition}, Feb. 2015.

\bibitem{GM20}
F.~Gregorio and D.~Mugnolo.
\newblock Bi-{L}aplacians on graphs and networks.
\newblock {\em Journal of Evolution Equations}, 20(11):191--232, 2020.

\bibitem{Gri_incol08}
D.~Grieser.
\newblock Thin tubes in mathematical physics, global analysis and spectral
  geometry.
\newblock In {\em Analysis on graphs and its applications}, volume~77 of {\em
  Proc. Sympos. Pure Math.}, pages 565--593. Amer. Math. Soc., Providence, RI,
  2008.

\bibitem{GA13}
R.~O. Grossi and C.~M. Albarracín.
\newblock Variational approach to vibrations of frames with inclined members.
\newblock {\em Applied Acoustics}, 74:325--334, 2013.

\bibitem{GLL17}
Q.~Gu, G.~Leugering, and T.~Li.
\newblock Exact boundary controllability on a tree-like network of nonlinear
  planar timoshenko beams.
\newblock {\em Chinese Annals of Mathematics, Series B}, 38(6):711--740, May
  2017.

\bibitem{KarnovskiiLebed_FreeVib}
I.~A. Karnovskii and O.~I. Lebed.
\newblock {\em Free vibrations of beams and frames : eigenvalues and
  eigenfunctions}.
\newblock McGraw-Hill, New York, 2004.

\bibitem{Kato_perturbation}
T.~Kato.
\newblock {\em Perturbation theory for linear operators}.
\newblock Classics in Mathematics. Springer-Verlag, Berlin, 1995.
\newblock Reprint of the 1980 edition.

\bibitem{KiiKurUsm_pla15}
J.-C. Kiik, P.~Kurasov, and M.~Usman.
\newblock On vertex conditions for elastic systems.
\newblock {\em Physics Letters A}, 379(34-35):1871--1876, Sept. 2015.

\bibitem{KotSmi_ap99}
T.~Kottos and U.~Smilansky.
\newblock Periodic orbit theory and spectral statistics for quantum graphs.
\newblock {\em Ann. Physics}, 274(1):76--124, 1999.

\bibitem{Kuc_bams16}
P.~Kuchment.
\newblock An overview of periodic elliptic operators.
\newblock {\em Bull. Amer. Math. Soc. (N.S.)}, 53(3):343--414, 2016.

\bibitem{KucZen_jmaa01}
P.~Kuchment and H.~Zeng.
\newblock Convergence of spectra of mesoscopic systems collapsing onto a graph.
\newblock {\em J. Math. Anal. Appl.}, 258(2):671--700, 2001.

\bibitem{Kur_lmp19}
P.~Kurasov.
\newblock On the ground state for quantum graphs.
\newblock {\em Lett. Math. Phys.}, 109(11):2491--2512, 2019.

\bibitem{KM21}
P.~Kurasov and J.~Muller.
\newblock n-{L}aplacians on metric graphs and almost periodic functions: I.
\newblock {\em Annales Henri Poincar\'e}, 22:121–169, 2021.

\bibitem{LLS92}
J.~E. Lagnese, G.~Leugering, and E.~J. P.~G. Schmidt.
\newblock Modelling and controllability of networks of thin beams.
\newblock {\em System Modelling and Optimization. Lecture Notes in Control and
  Information Sciences}, 180:467--480, 1992.

\bibitem{Lagnese_multilink}
J.~E. Lagnese, G.~Leugering, and E.~J. P.~G. Schmidt.
\newblock {\em Modeling, analysis and control of dynamic elastic multi-link
  structures}.
\newblock Systems {\&} Control: Foundations {\&} Applications. Birkh{\"a}user
  Boston Inc., Boston, MA, 1994.

\bibitem{LLS93}
J.~E. Lagnese, G.~Leugering, and P.~G. Schmidt.
\newblock Modelling of dynamic networks of thin thermoelastic beams.
\newblock {\em Mathematical Methods in the Applied Sciences}, 16:327--358,
  1993.

\bibitem{LNS19}
G.~Leugering, S.~A. Nazarov, and A.~S. Slutskij.
\newblock The asymptotic analysis of a junction of two elastic beams.
\newblock {\em ZAMM - Journal of Applied Mathematics and Mechanics}, 99:1--22,
  2019.

\bibitem{LNST19}
G.~Leugering, S.~A. Nazarov, A.~S. Slutskij, and J.~Taskinen.
\newblock Asymptotic analysis of a bit brace shaped junction of thin rods.
\newblock {\em ZAMM - Journal of Applied Mathematics and Mechanics}, 99:1--11,
  2019.

\bibitem{Mackey_representations_in_physics}
G.~W. Mackey.
\newblock {\em Unitary group representations in physics, probability, and
  number theory}, volume~55 of {\em Mathematics Lecture Note Series}.
\newblock Benjamin/Cummings, Reading, Mass., 1978.

\bibitem{PDEOnMultiStructures01}
F.~Mehmeti, J.~V. Below, and S.~Nicaise.
\newblock {\em Partial differential equations on multistructures}.
\newblock Marcel Dekker, Inc, 2001.

\bibitem{Mei_jva12}
C.~Mei.
\newblock Wave analysis of in-plane vibrations of {L}-shaped and portal planar
  frame structures.
\newblock {\em Journal of Vibration and Acoustics}, 134(2), 01 2012.

\bibitem{Mei19}
C.~Mei.
\newblock Analysis of in- and out-of plane vibrations in a rectangular frame
  based on two- and three-dimensional structural models.
\newblock {\em Journal of Sound and Vibration}, 440(3):412--438, Feb. 2019.

\bibitem{MM05}
C.~Mei and R.~Mace.
\newblock Wave reflection and transmission in timoshenko beams and wave
  analysis of timoshenko beam structures.
\newblock {\em Journal of Sound and Vibration}, 127(4):382--394, Aug. 2005.

\bibitem{MPZ02}
G.~P. Menzala, A.~F. Pazoto, and E.~Zuazua.
\newblock Stabilization of {B}erger-–{T}imoshenko's equation as limit of the
  uniform stabilization of the von {K}\'arm\'an system of beams and plates.
\newblock {\em Mathematical Modelling and Numerical Analysis}, 36:657--691,
  2002.

\bibitem{MR08}
D.~Mercier and V.~R{\'e}gnier.
\newblock Spectrum of a network of {E}uler--{B}ernoulli beams.
\newblock {\em Journal of Mathematical Analysis and Applications},
  337:174–196, 2008.

\bibitem{MNP13}
D.~Mugnolo, R.~Nittka, and O.~Olaf.
\newblock Norm convergence of sectorial operators on varying {H}ilbert spaces.
\newblock {\em Operators and Matrices}, 7:955–995, 2013.

\bibitem{LR13}
H.~P.~Lin and J.~Ro.
\newblock Vibration analysis of planar serial-frame structures.
\newblock {\em Journal of Sound and Vibration}, 262(15):1113--1131, May 2003.

\bibitem{Post_book12}
O.~Post.
\newblock {\em Spectral Analysis on Graph-like Spaces}, volume 2039 of {\em
  Lecture Notes in Mathematics}.
\newblock Springer Verlag, Berlin, 2012.

\bibitem{PS19}
O.~Post and J.~Simmer.
\newblock Quasi-unitary equivalence and generalized norm resolvent convergence.
\newblock {\em Revue Roumaine des Mathematiques Pures et Appliquees},
  64:373–391, 2019.

\bibitem{RBR13}
A.~R. Ratazzi, D.~V. Bambill, and C.~A. Rossit.
\newblock Free vibrations of beam system structures with elastic boundary
  conditions and an internal elastic hinge.
\newblock {\em Chinese Journal of Engineering}, pages 1--10, 2013.

\bibitem{Schmudgen_unboundedSAO}
K.~Schm\"{u}dgen.
\newblock {\em Unbounded self-adjoint operators on {H}ilbert space}, volume 265
  of {\em Graduate Texts in Mathematics}.
\newblock Springer, Dordrecht, 2012.

\bibitem{S01}
G.~Sweers.
\newblock When is the first eigenfunction for the clamped plate equation of
  fixed sign?
\newblock {\em Electronic Journal of Differential Equations}, 6:285–296.,
  2001.

\bibitem{TSOGM20}
A.~Tomovi\'c, S.~\v{S}alini\'c, A.~Obradovi\'c, A.~Grbovi\'c, and
  M.~Milovan\v{c}evi\'c.
\newblock Closed-form solution for the free axial-bending vibration problem of
  structures composed of rigid bodies and elastic beam segments.
\newblock {\em Applied Mathematical Modelling}, 77:1148--1167, 2020.

\bibitem{Bel_laa85}
J.~von Below.
\newblock A characteristic equation associated to an eigenvalue problem on
  {$c^2$}-networks.
\newblock {\em Linear Algebra Appl.}, 71:309--325, 1985.

\bibitem{WilWit_ijms70}
F.~W. Williams and W.~H. Wittrick.
\newblock An automatic computational procedure for calculating natural
  frequencies of skeletal structures.
\newblock {\em Int. J. Mech. Sci.}, 12(9):781--791, 1970.

\bibitem{WitWil_qjmam71}
W.~H. Wittrick and F.~W. Williams.
\newblock A general algorithm for computing natural frequencies of elastic
  structures.
\newblock {\em Quart. J. Mech. Appl. Math.}, 24:263--284, 1971.

\bibitem{Ye_StructuralAndStress}
J.~Ye.
\newblock {\em Structural and Stress Analysis: Theories, Tutorials and
  Examples}.
\newblock CRC Press, 2nd edition, 2015.

\bibitem{Z02}
V.~V. Zhikov.
\newblock Homogenization of elasticity problems on singular structures.
\newblock {\em Izvestiya: Mathematics}, 66:299--365, 2002.

\bibitem{ZP16}
V.~V. Zhikov and S.~E. Pastukhova.
\newblock Bloch principle for elliptic differential operators with periodic
  coefficients.
\newblock {\em Russian Journal of Mathematical Physics}, 23:257--277, 2016.

\end{thebibliography}

\end{document}